\newcommand{\Z} {\mathbb{Z}}
\newcommand{\N} {\Z_{>0}}
\newcommand{\Nn}{\Z_{\ge 0}}
\newcommand{\Q} {\mathbb{Q}}
\newcommand{\F} {\mathbb{F}}
\newcommand{\Fp}{ {\mathbb{F}_p} }
\newcommand{\Y} {\mathcal{Y}}
\newcommand{\C} {\mathcal{C}}
\newcommand{\Cp}[1]{\mathcal{C}^#1_p}
\newcommand{\mod}{\ensuremath{\mathop{\operator@font {mod}}\nolimits}}
\newcommand{\res}{\ensuremath{\mathop{\operator@font {res}}\nolimits}}
\newcommand{\rad}{\ensuremath{\mathop{\operator@font {rad}}\nolimits}}
\newcommand{\rk} {\ensuremath{\mathop{\operator@font {rk}} \nolimits}}
\newcommand{\im} {\ensuremath{\mathop{\operator@font {im}} \nolimits}}
\newcommand{\col}{\ensuremath{\mathop{\operator@font {col}}\nolimits}}
\newcommand{\Md} {\big|}
\newcommand{\lra}{\longrightarrow}
\newcommand{\slra}[1]{\stackrel{#1}{\lra}}
\newcommand{\cp} {\subset_p}
\newcommand{\cmp}[1]{\equiv #1 \, (\mod p)}
\newcommand{\cmps}[1]{\equiv #1 \, (\mod p^2)}
\newcommand{\Sig}[1]{\Sigma_{#1}}
\newcommand{\la} {\lambda}
\newcommand{\Slp}{S^\la_{\Fp}}
\newcommand{\Smp}{S^\mu_{\Fp}}
\newcommand{\Slz}{S^\la_\Z}
\newcommand{\Smz}{S^\mu_\Z}
\newcommand{\Slq}{S^\la_\Q}
\newcommand{\SlR}{S^\la_R}
\newcommand{\Sli} {\SlR\!\uparrow^{\Sig{n+1}}_{\Sig n}}
\newcommand{\Slzi}{\Slz\!\uparrow^{\Sig{n+1}}_{\Sig n}}
\newcommand{\Slr} {\SlR\!\downarrow^{\Sig n}_{\Sig{n-1}}}
\newcommand{\Slzr}{\Slz\!\downarrow^{\Sig n}_{\Sig{n-1}}}
\newcommand{\Hlz}[1]{H^#1(\Sig n,\Slz)}
\newcommand{\Hlp}[1]{H^#1(\Sig n,\Slp)}
\newcommand{\HlR}[1]{H^#1(\Sig n,\SlR)}
\newcommand{\pad}{\sum_{i=1}^{r}c_ip^i}
\newtheorem{Theorem}{Theorem}[section]
\newtheorem{Lemma}[Theorem]{Lemma}
\newtheorem{Corollary}[Theorem]{Corollary}
\newtheorem{Proposition}[Theorem]{Proposition}
\newtheorem{Conjecture}[Theorem]{Conjecture}
\let\my@makefnmark\@makefnmark
\renewcommand{\@makefnmark}{}
\begin{document}

\title{Low-degree Cohomology\\of Integral Specht Modules}
\author{Christian Weber}
\maketitle

\begin{abstract}

We introduce a way of describing cohomology of the symmetric groups $\Sig n$ with coefficients in Specht modules. We study $\HlR i$ for $i \in \{0,1,2\}$ and $R = \Z$, $\Fp$. The focus lies on the isomorphism type of $\Hlz 2$. Unfortunately, only in few cases can we determine this exactly. In many cases we obtain only some information about the prime divisors of $|\Hlz 2|$. The most important tools we use are the Zassenhaus algorithm, the Branching Rules, Bockstein type homomorphisms, and the results from \cite{BKM}.
\footnotetext{ \hspace*{-4mm}
2000 AMS Subject Classification: Primary 20J06; Secondary 20C30, 20C10.

Keywords: Cohomology, symmetric groups, Specht module, Bockstein homomorphism, Zassenhaus algorithm.

Published in Experimental Mathematics 18 (2009), no. 1, 85-95.
}

\end{abstract}

%\tableofcontents

\section{Introduction}

The aim of this work is to determine low-degree cohomology of symmetric groups with coefficients in Specht modules $\SlR$, where $R = \Z$ or $\Fp$. We pay special attention to $\Hlz 2$.

In \cite{KP} and \cite[5. Some applications]{BKM} we find most of the known facts about cohomology of certain irreducible $\Fp\Sig n$-modules. From that we obtain some information about cohomology of certain Specht modules with the help of long exact cohomology sequences (cf. Section \ref{comp}).

For the case $R = \Z$ even less is known. But it is advisable to study integral and $\Fp$-cohomology simultaneously, because there is an interesting connection, which can be described as follows.
\\ % set off

Let $p$ be a prime. For $R \in \{\Z, \Fp\}$ and $i \ge 0$ let
$$
\C^i(R) := \bigcup_{n \in \Nn} \{ \la \vdash n \mid \HlR i \ne 0 \}.
$$
Further, for $i \ge 1$ we consider
$$
\Cp i := \bigcup_{n \in \Nn} \{ \la \vdash n \mid p \mbox{ divides } |\Hlz{i}| \} \subseteq \C^i(\Z).
$$
Then we obtain the following:

\begin{Lemma} \label{graph connection}
\begin{enumerate}
\item For $i \ge 1$ we have $\C^i(\Fp) = \Cp i \cup \Cp{{i+1}}$.
\item $\C^0(\Fp) = \C^0(\Z) \dot{\cup} \Cp 1$.
\end{enumerate}
\end{Lemma}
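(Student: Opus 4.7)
The plan is to derive both statements from the short exact sequence of $\Z\Sig n$-modules
$$0 \to \Slz \slra{p} \Slz \to \Slp \to 0$$
coming from multiplication by $p$, and to extract from its long cohomology sequence the universal-coefficient-style short exact sequence
$$0 \to \Hlz i / p\Hlz i \to \Hlp i \to \Hlz{i+1}[p] \to 0,$$
where $[p]$ denotes the $p$-torsion subgroup. This identifies $\Hlp i = 0$ with the simultaneous vanishing of $\Hlz i / p\Hlz i$ and $\Hlz{i+1}[p]$, and both conditions will translate cleanly into membership in $\Cp i$ and $\Cp{i+1}$.

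For part (a), I will use that $\Sig n$ is finite and $\Slz$ is a finitely generated $\Z$-lattice, so $\Hlz i$ is a finite abelian group for every $i \ge 1$. For such groups $\Hlz i / p\Hlz i = 0$ and $\Hlz i[p] = 0$ are each equivalent to $p \nmid |\Hlz i|$, i.e.\ to $\la \notin \Cp i$. The claim $\C^i(\Fp) = \Cp i \cup \Cp{i+1}$ then falls straight out of the short exact sequence above.

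For part (b), I will treat degree zero separately, because $\Hlz 0 = (\Slz)^{\Sig n}$ is a free $\Z$-module (it is a submodule of the $\Z$-free module $\Slz$) and so need not be finite. Freeness forces $\Hlz 0 / p\Hlz 0$ to vanish precisely when $\Hlz 0$ itself vanishes, i.e.\ when $\la \notin \C^0(\Z)$; combined with $\Hlz 1[p] = 0 \Leftrightarrow \la \notin \Cp 1$ this gives $\C^0(\Fp) = \C^0(\Z) \cup \Cp 1$. To see the union is disjoint, I will argue that $\la \in \C^0(\Z)$ already forces $\la = (n)$: tensoring the nonzero invariants with $\Q$ yields $(\Slq)^{\Sig n} \ne 0$, and since $\Slq$ is irreducible as a $\Q\Sig n$-module it must then coincide with its invariants, so $\Slq$ is the trivial representation. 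For $\la = (n)$, however, $\Slz \cong \Z$ with trivial action, whence $\Hlz 1 = \mathrm{Hom}(\Sig n, \Z) = 0$ and $\la \notin \Cp 1$.

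I do not expect a serious obstacle. The only care needed is to isolate the degree-zero case cleanly, recognising that $\Hlz 0$ is torsion-free while the higher $\Hlz i$ are all torsion, and to invoke rational irreducibility of Specht modules for the disjointness claim in (b).
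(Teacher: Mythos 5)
Your proof is correct and follows essentially the same route as the paper: both extract the statement from the long exact cohomology sequence of $0 \to \Slz \slra{p} \Slz \to \Slp \to 0$, your universal-coefficient sequence $0 \to \Hlz i/p\Hlz i \to \Hlp i \to \Hlz{i+1}[p] \to 0$ being exactly the content of the paper's Bockstein dimension count $d_i = x_i + x_{i+1}$, and both handle degree zero via the irreducibility of $\Slq$. The only cosmetic difference is that for disjointness in (b) you compute $H^1(\Sig n,\Z) = \mathrm{Hom}(\Sig n,\Z) = 0$ directly, where the paper deduces $p \nmid |\Hlz 1|$ from surjectivity of $\delta_0$; both suffice.
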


The first statement has a more general background, which is treated in Section \nolinebreak \ref{bock}. The lemma follows from Lemmas \ref{bockstein general} and \ref{bockstein H0}.

With the help of Lemma \ref{graph connection} (b), Lemma \ref{bockstein H0} and \cite[24.4 Theorem]{J}, we can describe $\Cp 1$ completely:

\begin{Corollary}
We have $\la := (\la_1,\ldots,\la_{l(\la)}) \in \Cp 1$ if and only if $\la \ne (n)$ and $\la_i \equiv -1 (\mod p^{z_i})$ for all $1 \le i \le l(\la)$, where $z_i := \min\{r \in \Nn \mid p^r > \la_{i+1}\}$.
\end{Corollary}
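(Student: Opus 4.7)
The plan is to reduce the statement to known computations via the decomposition $\C^0(\Fp) = \C^0(\Z) \dot{\cup} \Cp 1$ supplied by Lemma \ref{graph connection} (b). First I would identify $\C^0(\Z)$. Since $\Hlz 0 = (\Slz)^{\Sig n}$ and $\Slz$ is $\Z$-free, it embeds in $(\Slq)^{\Sig n}$; the rational Specht module is irreducible by Young's classical theorem, and equals the trivial $\Q\Sig n$-module exactly when $\la = (n)$. Hence $\Hlz 0 = 0$ unless $\la = (n)$, while $\la = (n)$ gives $\Slz = \Z$ with trivial action. So $\C^0(\Z)$ is precisely the set of one-row partitions.

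Second, the $\Fp$-side is handled directly by James \cite[24.4 Theorem]{J}, which asserts $(\Slp)^{\Sig n} \ne 0$ if and only if $\la_i \equiv -1 \pmod{p^{z_i}}$ for every $1 \le i \le l(\la)$; this identifies $\C^0(\Fp)$. Combining with Lemma \ref{graph connection} (b) then yields: $\la \in \Cp 1$ iff $\la \in \C^0(\Fp)$ and $\la \notin \C^0(\Z)$, that is, iff $\la$ satisfies the congruences and $\la \ne (n)$, as claimed.

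As a sanity check, for $\la = (n)$ one has $\la_2 := 0$, so $z_1 = 0$ and the congruence $\la_1 \equiv -1 \pmod{1}$ is vacuous; thus $(n)$ always lies in $\C^0(\Fp)$, and must be removed by hand---matching the explicit $\la \ne (n)$ clause. The whole argument is essentially bookkeeping once Lemma \ref{graph connection} (b) and James's theorem are available, so there is no genuine obstacle beyond correctly invoking these two ingredients; the only mild subtlety is to notice that the indexing $z_i$ used here is exactly James's way of recording the $p$-adic information needed for the fixed vector in $\Slp$.
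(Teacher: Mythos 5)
Your proposal is correct and follows essentially the same route as the paper: the paper derives this corollary precisely from Lemma \ref{graph connection} (b), the identification of $\C^0(\Z)$ with the one-row partitions via Lemma \ref{bockstein H0} (a) (irreducibility of $\Slq$), and James's \cite[24.4 Theorem]{J} for $\C^0(\Fp)$. Your observation that $\la=(n)$ always satisfies the congruences (since $z_{l(\la)}=0$) and must be excluded explicitly matches the paper's intent.
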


Let $\Y := \{ \la \vdash n \mid n \in \Nn \}$ be the set of all partitions of all nonnegative integers. The elements of $\Y$ form the vertices of the partition graph, where an arrow points from $\la$ to every element of $\la+$. (As usual, $\la+$ denotes the set of partitions obtained from $\la$ by adding a node, and $\la-$ denotes the set of partitions obtained from $\la$ by removing a node.)

By considering the induced subgraph, we obtain a graph structure for every subset $S \subseteq \Y$. If an arrow points from $\la$ to $\mu$ for $\la, \mu \in S$, we call $\la$ a {\em predecessor of $\mu$ in $S$} and $\mu$ a {\em successor of $\la$ in $S$}.

\begin{Theorem} \label{Cp structure}
Let $\la \in \Cp i$.
\begin{enumerate}
\item There exists a successor of $\la$ in $\Cp i$.
\item If $\la \vdash n$ with $p \nmid n$, then there exists a predecessor of $\la$ in $\Cp i$.
\end{enumerate}
\end{Theorem}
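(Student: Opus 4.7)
The plan is to exploit the Branching Rule for Specht modules, combined with Eckmann--Shapiro, to transfer divisibility by $p$ from $\Hlz i$ onto a single Specht factor on the neighbouring level. The arithmetic core is the following observation: if $0\to A\to B\to C\to 0$ is a short exact sequence of finite abelian groups, then $|B|$ divides $|A|\cdot |C|$, and the long exact cohomology sequences show that whenever a $\Z\Sig m$-module $M$ admits a filtration with Specht subquotients $S^{\mu_1}_\Z,\ldots,S^{\mu_k}_\Z$, the order $|H^i(\Sig m,M)|$ divides $\prod_{j=1}^k|H^i(\Sig m,S^{\mu_j}_\Z)|$. Since for $i\ge 1$ every such cohomology group is finite (being finitely generated and annihilated by $|\Sig m|$), divisibility by $p$ of the cohomology of $M$ forces divisibility by $p$ of the cohomology of at least one factor.

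For part~(a) the argument is now immediate. Induce $\Slz$ from $\Sig n$ to $\Sig{n+1}$: Eckmann--Shapiro gives $H^i(\Sig{n+1},\Slzi)\cong\Hlz i$, whose order is divisible by $p$ by hypothesis. The Branching Rule equips $\Slzi$ with a filtration whose Specht subquotients are exactly the $S^\mu_\Z$ for $\mu\in\la+$, and the arithmetic observation above produces a $\mu\in\la+$ with $\mu\in\Cp i$.

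Part~(b) is subtler because no analogue of Shapiro's lemma exists for restriction; this is precisely where the assumption $p\nmid n$ must enter. I would start from the short exact sequence
\[
 0\to S^{(n-1,1)}_\Z\to M^{(n-1,1)}\to\Z\to 0,
\]
in which $M^{(n-1,1)}=\Z\!\uparrow^{\Sig n}_{\Sig{n-1}}$ is the natural permutation $\Z\Sig n$-module and the projection is the sum map. The diagonal inclusion $\Z\hookrightarrow M^{(n-1,1)}$ is a section of the projection up to multiplication by $n$; since $n$ is a unit in $\Z_{(p)}$, the sequence splits after localisation at $p$. Tensoring with $\Slz$ and using the projection formula $\Z\!\uparrow\otimes W\cong W\!\downarrow\!\uparrow$ then yields, after localisation at $p$, a decomposition
\[
 \Slz\!\downarrow\!\uparrow\;\cong\;\Slz\;\oplus\;\bigl(S^{(n-1,1)}_\Z\otimes\Slz\bigr).
\]
Passing to cohomology and applying Shapiro to the left-hand side shows that $\Hlz i$ is a $\Z_{(p)}$-direct summand of $H^i(\Sig{n-1},\Slzr)$, whose order is therefore also divisible by $p$. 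One more application of the Branching Rule (now for restriction) and the arithmetic lemma above then delivers a $\nu\in\la-$ with $\nu\in\Cp i$.

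The main obstacle I expect is the careful integral bookkeeping in~(b): verifying that the projection-formula identification of $M^{(n-1,1)}\otimes\Slz$ with $\Slz\!\downarrow\!\uparrow$ is genuine over $\Z$, and that the resulting splitting—together with the induced direct summand structure on cohomology—truly captures the $p$-primary part of $\Hlz i$ after localisation. Once those subtleties are settled, the rest of the proof is a formal interplay of Eckmann--Shapiro, the Branching Rule, and the divisibility observation.
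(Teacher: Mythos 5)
Your part (a) is exactly the paper's argument: Eckmann--Shapiro, the integral Branching Theorem, and the observation that for a Specht filtration the order of cohomology divides the product of the orders of the cohomology of the factors. For part (b) you arrive at the same key intermediate claim --- that $p$ divides $|H^i(\Sig{n-1},\Slzr)|$ --- but by a genuinely different mechanism. The paper argues on the level of cohomology classes: it picks $\alpha\in\Hlz i$ of order $p$ and uses that the composite $\mathrm{tr}\circ\res^{\Sig n}_{\Sig{n-1}}$ is multiplication by the index $n$, so $\res(\alpha)\ne 0$ when $p\nmid n$. You instead work on the level of modules: after localising at $p$ the permutation module $M^{(n-1,1)}=\Z\!\uparrow^{\Sig n}_{\Sig{n-1}}$ splits off a trivial summand (since the index $n$ is invertible), and the projection formula turns this into a $\Z_{(p)}$-direct-summand embedding of $\Slz$ into $\Slz\!\downarrow\!\uparrow$, whence Shapiro exhibits the $p$-part of $\Hlz i$ as a direct summand of the $p$-part of $H^i(\Sig{n-1},\Slzr)$. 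The two are really the same fact seen from two sides --- the transfer identity is the cohomological shadow of your splitting --- but your version is marginally stronger (a direct-summand statement on the whole $p$-primary component rather than mere nonvanishing of restriction on one class), at the cost of the extra bookkeeping you flag: checking the integral projection formula and that localisation commutes with $H^i(G,-)$, both of which are routine for finite groups and finitely generated coefficients. The paper's transfer argument is shorter and avoids localisation entirely. Both proofs are correct.
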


This will be proved in Section \ref{branch}. The analogous statement for $\C^i(\Fp)$ follows from Theorem \ref{Cp structure} by Lemma \ref{graph connection} (a):

\begin{Corollary} \label{CFp structure}
Let $\la \in \C^i(\Fp)$.
\begin{enumerate}
\item There exists a successor of $\la$ in $\C^i(\Fp)$.
\item If $\la \vdash n$ with $p \nmid n$, then there exists a predecessor of $\la$ in $\C^i(\Fp)$.
\end{enumerate}
\end{Corollary}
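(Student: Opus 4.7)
The plan is to reduce the statement directly to Theorem \ref{Cp structure} via the set-theoretic decomposition $\C^i(\Fp) = \Cp i \cup \Cp{i+1}$ supplied by Lemma \ref{graph connection}(a). Starting from $\la \in \C^i(\Fp)$, I would split into cases according to whether $\la \in \Cp i$ or $\la \in \Cp{i+1}$ (at least one holds). In the first case, Theorem \ref{Cp structure}(a), applied with index $i$, produces a successor $\mu$ of $\la$ inside $\Cp i$; in the second case, the same theorem with index $i+1$ produces a successor $\mu$ inside $\Cp{i+1}$. Either way $\mu$ lies in the union $\Cp i \cup \Cp{i+1} = \C^i(\Fp)$, and since the arrow $\la \to \mu$ already exists in the ambient partition graph $\Y$, $\mu$ is a successor of $\la$ in the induced subgraph on $\C^i(\Fp)$. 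This handles (a).

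For (b), I would run the identical argument but invoke Theorem \ref{Cp structure}(b) in each of the two cases. The extra hypothesis $p \nmid n$ transfers verbatim because it is a condition on $\la$ itself, not on the choice between $\Cp i$ and $\Cp{i+1}$; so in whichever of the two sets $\la$ lies, Theorem \ref{Cp structure}(b) gives a predecessor inside that set, and inclusion into $\C^i(\Fp)$ completes the argument.

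I do not foresee a genuine obstacle: the corollary is a purely formal consequence of the two preceding results, and the real mathematical content is postponed to Theorem \ref{Cp structure}, whose proof via the Branching Rules is deferred to Section \ref{branch}. The only small bookkeeping point is that Theorem \ref{Cp structure} is stated for indices $\ge 1$, so I must ensure both $i$ and $i+1$ are admissible; this is automatic once $i \ge 1$, which is the implicit setting of the statement (Lemma \ref{graph connection}(a) itself requires $i \ge 1$, so invoking it already forces this range). Thus the argument is no more than a two-line case split, and the bulk of the writeup will simply spell out the containment $\Cp j \subseteq \C^i(\Fp)$ for $j \in \{i, i+1\}$ that makes the successor/predecessor survive the passage back to $\C^i(\Fp)$.
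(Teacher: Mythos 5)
Your proposal is correct and matches the paper's own (one-line) argument: the corollary is deduced from Theorem \ref{Cp structure} via the decomposition $\C^i(\Fp) = \Cp i \cup \Cp{i+1}$ of Lemma \ref{graph connection}~(a), exactly as you describe. The case split and the observation that successors/predecessors survive the inclusion $\Cp j \subseteq \C^i(\Fp)$ are all that is needed.
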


Our aim is to determine $\Cp 2$. The previous statements suggest the following strategy: First we try to find some (or better, all) partitions in $\Cp 2$ without predecessor, and then we try to find some (or better, all) successors of every known partition in $\Cp 2$. This means that we search for paths in the graph $\Cp 2$. In this context, the term {\em path} means an infinite simple path, whose start vertex has no predecessor in $\Cp 2$.

Before going into that, we have to prepare the tools: In Section \ref{comp} we collect all information we can get about $\C^1(\Fp)$ and $\C^2(\Fp)$. In Section \ref{zas} we will take a look at a computational approach to $\Hlz 2$ based on the Zassenhaus algorithm. The computations were executed in GAP. In a few cases the algorithm even provides theoretical results.

Finally, in Section \ref{path} we combine the previous results in order to obtain some first statements about certain paths in $\Cp 2$. Further, we formulate some conjectures based on the computational results.

Section 3, most of the results in Section 5, and the conjectures in Section 6 are based on the author's diploma thesis \cite{web}, which was prepared under the supervision of Prof. Gerhard Hi\ss.

\section{Bockstein type homomorphisms} \label{bock}

In the following, let $G$ be a finite group and $M$ a $\Z G$-module. We consider the cohomology groups $H^i(G,M)$ of $G$ with coefficients in $M$ for $i \in \Nn$.

An important tool in cohomology theory is exact sequences. For every short exact sequence of $\Z G$-modules there exist natural homomorphisms, which build up a long exact cohomology sequence. We take a look at the following example:
$$
0 \lra \Z^k \stackrel{\cdot p}{\lra} \Z^k \lra \Z^k/p\Z^k \lra 0.
$$
The corresponding long exact cohomology sequence looks like this:
$$
\begin{array}{ccccccccc}
0 & \lra & H^0(G,\Z^k) & \slra{\pi_0} & H^0(G,\Z^k) & \slra{\delta_0} & H^0(G,\Z^k/p\Z^k) \\
& \slra{\beta_0} & H^1(G,\Z^k) & \slra{\pi_1} & \dots & \slra{\delta_{i-1}} & H^{i-1}(G,\Z^k/p\Z^k) \\
& \slra{\beta_{i-1}} & H^i(G,\Z^k) & \slra{\pi_i} & H^i(G,\Z^k) & \slra{\delta_i} & H^i(G,\Z^k/p\Z^k) \\
& \slra{\beta_i} & H^{i+1}(G,\Z^k) & \slra{\pi_{i+1}} & H^{i+1}(G,\Z^k) & \slra{\delta_{i+1}} & \ldots
\end{array}
$$
The homomorphisms $\pi_i$ are defined by
$$
\pi_i   : H^i(G,\Z^k) \lra H^i(G,\Z^k), x \mapsto px.
$$
In the case of the trivial module $\Z^1$, the connecting homomorphisms $\beta_i$ are known as {\em Bockstein homomorphisms}. We are going to use this term for arbitrary $\Z G$-modules $\Z^k$.
\\ % set off

By \cite[Corollary 6.5.10]{We}, $H^i(G,\Z^k)$ is a finite abelian group for $i \ge 1$. Hence, by the fundamental theorem for finitely generated abelian groups, its isomorphism type is characterized by its {\em elementary divisors} $e_1, \ldots, e_r \ge 1$, where $e_j \Md e_{j+1}$ for $1 \le j \le r-1$ and
$$
H^i(G,\Z^k) \cong \bigoplus_{j=1}^{r} \Z/e_j\Z.
$$

For $i \ge 1$ let $x_i$ be the $p$-rank of $H^i(G,\Z^k)$, that is, the number of direct summands of the maximal elementary abelian $p$-subgroup. (Since $H^i(G,\Z^k)$ is a finite abelian group, $x_i$ equals the number of elementary divisors divisible by $p$.) Further, let $d_i$ be the dimension of $H^i(G,\Z^k/p\Z^k)$ for $i \ge 0$. Then we obtain the following statement:

\begin{Lemma} \label{bockstein general}
For $i \ge 1$ we have $d_i = x_i + x_{i+1}$.
\end{Lemma}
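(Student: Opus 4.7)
The plan is to extract the count directly from the six-term piece
$$
H^i(G,\Z^k) \slra{\pi_i} H^i(G,\Z^k) \slra{\delta_i} H^i(G,\Z^k/p\Z^k) \slra{\beta_i} H^{i+1}(G,\Z^k) \slra{\pi_{i+1}} H^{i+1}(G,\Z^k)
$$
of the displayed long exact cohomology sequence. Exactness yields the short exact sequence
$$
0 \lra \im(\delta_i) \lra H^i(G,\Z^k/p\Z^k) \lra \im(\beta_i) \lra 0,
$$
so that $d_i = \dim \im(\delta_i) + \dim \im(\beta_i)$, all dimensions taken over $\Fp$.

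Next I would identify the two summands. Exactness at the source of $\delta_i$ gives $\im(\delta_i) \cong H^i(G,\Z^k)/\im(\pi_i) = H^i(G,\Z^k)/pH^i(G,\Z^k)$, while exactness at the target of $\beta_i$ gives $\im(\beta_i) = \ker(\pi_{i+1})$, which is the $p$-torsion subgroup $H^{i+1}(G,\Z^k)[p]$.

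To finish, I would invoke \cite[Corollary 6.5.10]{We} to conclude that both $H^i(G,\Z^k)$ and $H^{i+1}(G,\Z^k)$ are finite abelian groups (this is where the hypothesis $i \ge 1$ enters). For any finite abelian group $A \cong \bigoplus_j \Z/e_j\Z$, a direct check on each cyclic factor shows that both $A/pA$ and $A[p]$ have $\Fp$-dimension equal to $\#\{j \mid p \Md e_j\}$, namely the $p$-rank of $A$. Applied to the two cohomology groups this gives $\dim \im(\delta_i) = x_i$ and $\dim \im(\beta_i) = x_{i+1}$, and summing yields $d_i = x_i + x_{i+1}$.

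The argument is essentially formal and I do not anticipate a serious obstacle. The one point that requires care is the restriction to $i \ge 1$: for $i = 0$ the group $H^0(G,\Z^k)$ may have a nontrivial free part, in which case $\dim A/pA$ strictly exceeds $\dim A[p]$, and the identification of $\im(\delta_0)$ with $A/pA$ no longer matches the $p$-rank. This is precisely the reason a separate analysis is needed for $\C^0$, as recorded in Lemma \ref{graph connection} (b).
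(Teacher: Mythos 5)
Your proposal is correct and follows essentially the same route as the paper: both split $H^i(G,\Z^k/p\Z^k)$ via exactness into $\im(\delta_i)\cong H^i(G,\Z^k)/\im(\pi_i)$ and $\im(\beta_i)=\ker(\pi_{i+1})$, and then read off the $p$-ranks from the elementary divisor decomposition of the finite groups $H^i$ and $H^{i+1}$. Your closing remark about the free part of $H^0(G,\Z^k)$ correctly identifies why the paper treats $i=0$ separately in Lemma \ref{bockstein H0}.
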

\begin{proof}
Since $\pi_i$ is multiplication by $p$, we have for all $i \ge 0$
$$
\im(\pi_i) \cong \bigoplus_{j=1}^{r-x_i} \Z/e_j\Z \oplus  \bigoplus_{j=r-x_i+1}^{r} \Z/{\textstyle \frac{e_j}{p}}\Z,
$$
$$
\ker(\pi_i) \cong \bigoplus_{j=1}^{x_i} \Z/p\Z.
$$
Further, we know that
$$
\ker(\pi_{i+1}) = \im(\beta_i) \cong H^i(G,\Z^k/p\Z^k)/\ker(\beta_i),
$$
$$
\ker(\beta_i) = \im(\delta_i) \cong H^i(G,\Z^k)/\ker(\delta_i) = H^i(G,\Z^k)/\im(\pi_i)  \cong \ker(\pi_i).
$$
By considering the group orders, we obtain
$$
x_{i+1} = d_i-\dim(\ker (\beta_i)) = d_i-x_i.
$$
Now the claim follows.
\end{proof}

\begin{Corollary} \label{bockstein cor}
Let $i \ge 1$.
\begin{enumerate}
\item If $p \Md |H^i(G,\Z^k)|$, then $H^{i-1}(G,\Z^k/p\Z^k) \ne 0$ and $H^i(G,\Z^k/p\Z^k) \ne 0$.
\item We have $H^i(G,\Z^k/p\Z^k) \ne 0$ if and only if $p \Md |H^i(G,\Z^k)|$ or $p \Md |H^{i+1}(G,\Z^k)|$.
\end{enumerate}
\end{Corollary}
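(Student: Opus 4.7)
The plan is to derive both parts from Lemma \ref{bockstein general}, applied at the indices $i-1$, $i$, $i+1$ as needed, with a small direct argument from the start of the long exact cohomology sequence to handle one boundary case.

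For part (b), I would observe that $H^i(G,\Z^k/p\Z^k) \ne 0$ is equivalent to $d_i > 0$, and by Lemma \ref{bockstein general} this holds iff at least one of $x_i, x_{i+1}$ is positive. Since $x_j > 0$ is by definition equivalent to $p \Md |H^j(G,\Z^k)|$, this is exactly the claimed biconditional.

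Part (a) reduces to (b) in most cases. Assume $p \Md |H^i(G,\Z^k)|$, so $x_i > 0$. Then $d_i = x_i + x_{i+1} \ge x_i > 0$ gives $H^i(G,\Z^k/p\Z^k) \ne 0$, and for $i \ge 2$ the same reasoning at index $i-1$ yields $d_{i-1} = x_{i-1} + x_i \ge x_i > 0$, so $H^{i-1}(G,\Z^k/p\Z^k) \ne 0$ as well.

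The only genuine obstacle is the case $i = 1$, where Lemma \ref{bockstein general} does not cover $d_0$ (since $H^0(G,\Z^k) = (\Z^k)^G$ need not be finite, so the proof of that lemma does not go through verbatim). Here I would read off the start of the long exact sequence directly: $\ker(\pi_1) = \im(\beta_0)$ has order $p^{x_1} \ge p$, and $\im(\beta_0)$ is a quotient of $H^0(G,\Z^k/p\Z^k)$, which therefore cannot vanish. This boundary case is the only subtle point; everything else is a mechanical consequence of the previous lemma.
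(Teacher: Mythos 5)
Your proof is correct and follows the same route the paper intends: the corollary is stated without a separate proof as a direct consequence of Lemma \ref{bockstein general}, which is exactly how you derive it. Your extra care with the $i=1$ case of part (a) — reading $\ker(\pi_1)=\im(\beta_0)$ off the start of the long exact sequence, since the lemma does not cover $d_0$ — is a genuine gap in the paper's terse presentation that you correctly identified and closed, using precisely the identities already established inside the proof of that lemma.
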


Note that Lemma \ref{bockstein general} does not tell us anything about $H^0(G,\Z^k)$, because this group is not finite if it is not trivial. We will take this group into consideration in the special case of the Specht modules.

We choose $G := \Sig n$ and replace $k$ by $\rk(\Slz)$, $\Z^k$ by $\Slz$, and $\Z^k/p\Z^k$ by $\Slp$. For $i \ge 1$ let $x_i^\la$ be the $p$-rank of $\Hlz i$, and for $i \ge 0$ let $d_i^\la$ be the dimension of $\Hlp i$. By Lemma \ref{bockstein general}, we have $d_i^\la = x_i^\la + x_{i+1}^\la$ for $i \ge 1$, and for $i = 0$ we obtain the following:

\begin{Lemma} \label{bockstein H0}
\begin{enumerate}
\item $ \Hlz 0 \left\{\begin{array}{ll} \cong \Z & \mbox{for } \la = (n), \\ =0 & \mbox{otherwise.} \end{array}\right. $
\item For $\la = (n)$ we have $d_0^\la = 1$ and $x_1^\la = 0$, and for $\la \ne (n)$ we have $d_0^\la = x_1^\la$.
\end{enumerate}
\end{Lemma}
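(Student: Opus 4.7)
The plan is to prove (a) first, since (b) will build on it. For any finite group $G$ and any $\Z G$-module $M$, $H^0(G,M) = M^G$, so I need to identify the $\Sig n$-fixed points of $\Slz$. When $\la = (n)$, the Specht module $\Slz$ is by definition the trivial $\Z\Sig n$-module $\Z$, hence $(\Slz)^{\Sig n} = \Z$. When $\la \ne (n)$, I would note that $\Slz$ is $\Z$-free and embeds $\Sig n$-equivariantly into $\Slq = \Slz \otimes_\Z \Q$. Now I invoke the classical fact (see James's book) that $\Slq$ is an irreducible $\Q\Sig n$-module and, for $\la \ne (n)$, is not the trivial representation. Therefore $(\Slq)^{\Sig n} = 0$, and the inclusion forces $(\Slz)^{\Sig n} = 0$ as well.

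For (b) in the case $\la = (n)$, the reduction $\Slp$ is the trivial $\Fp\Sig n$-module $\Fp$, so $\Hlp 0 = \Fp$ and $d_0^\la = 1$. Moreover, $\Hlz 1 = \mathrm{Hom}(\Sig n, \Z) = 0$ because $\Sig n$ is finite, which yields $x_1^\la = 0$.

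For (b) in the case $\la \ne (n)$, I would feed part (a) into the Bockstein-style long exact sequence displayed just before the lemma. Since $\Hlz 0 = 0$, that sequence truncates to
$$
0 \lra \Hlp 0 \slra{\beta_0} \Hlz 1 \slra{\pi_1} \Hlz 1,
$$
so $\beta_0$ is injective with image $\ker(\pi_1)$, the $p$-torsion subgroup of $\Hlz 1$. By the fundamental theorem for finitely generated abelian groups (already recalled in the excerpt), $|\ker(\pi_1)| = p^{x_1^\la}$, and hence $d_0^\la = \dim_{\Fp} \Hlp 0 = x_1^\la$.

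The only non-routine input is the representation-theoretic fact that $\Slq$ is an irreducible non-trivial $\Q\Sig n$-module for $\la \ne (n)$; once this is cited, the rest is bookkeeping with the Bockstein sequence and the structure theorem, so I do not anticipate any genuine obstacle. Note that Lemma \ref{bockstein general} itself cannot be applied directly, since it assumes $i \ge 1$; that is exactly why $H^0$ has to be handled separately here.
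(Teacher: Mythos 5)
Your proof is correct, and for part (a) and for the case $\la \ne (n)$ of part (b) it is essentially identical to the paper's: irreducibility of $\Slq$ gives the fixed points, and the truncated Bockstein sequence $0 \to \Hlp 0 \slra{\beta_0} \Hlz 1 \slra{\pi_1} \Hlz 1$ identifies $\Hlp 0$ with $\ker(\pi_1) \cong \Fp^{x_1^\la}$. The one place you diverge is the claim $x_1^{(n)} = 0$: you observe directly that $H^1(\Sig n,\Z) \cong \mathrm{Hom}(\Sig n,\Z) = 0$ for the trivial module over a finite group, whereas the paper stays inside the Bockstein formalism — since $\Hlz 0 \cong \Z$ surjects onto $\Hlp 0 \cong \Fp$ under $\delta_0$, the connecting map $\beta_0$ vanishes, so $\pi_1$ (multiplication by $p$) is injective on the finite group $\Hlz 1$ and hence $p \nmid |\Hlz 1|$. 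Your shortcut is more elementary and yields the stronger conclusion $\Hlz 1 = 0$ rather than merely $p \nmid |\Hlz 1|$; the paper's version has the advantage of using only the exact sequence already set up for Lemma \ref{bockstein general}, and is the template reused elsewhere in the paper. Both are complete proofs.
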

\begin{proof}
(a) Since $\Slq$ is irreducible (cf. \cite[2.1.11 Theorem]{JK}) and $\Slq \cong \Q \otimes_\Z \Slz$, the set of fixed points $\Hlz 0$ of $\Slz$ is $\Slz \cong \Z$ for $\la = (n)$ and trivial otherwise.

(b) Let $\la = (n)$. Obviously, the dimension of $\Hlp 0$ is $d_0^\la = 1$. Further, we have $\Hlz 0 \cong \Z$ by (a). In this case $\delta_0$ is surjective, that is, the Bockstein homomorphism $\beta_0$ maps to zero. Hence $\pi_1 : \Hlz 1 \rightarrow \Hlz 1$ is bijective. Since $\pi_1$ is multiplication by $p$, the group order of $\Hlz 1$ is not divisible by $p$.

If $\la \ne (n)$, then $\Hlz 0 = 0$ by (a), and the Bockstein homomorphism $\beta_0$ is injective. Since $\Fp^{x_1^\la} \cong \ker(\pi_1) = \im(\beta_0) \cong \Hlp 0 \cong \Fp^{\delta_0^\la}$, the claim follows.
\end{proof}

\begin{Corollary} \label{bockstein specht}
Let $i \ge 1$. By induction we obtain from Lemmas \ref{bockstein H0} and \ref{bockstein general} the following:
\begin{enumerate}
\item Let $\la \ne (n)$. Then $x_i^\la = \sum_{j=0}^{i-1} (-1)^{i-1-j}d_j^\la$.
\item Let $\la  =  (n)$. Then $x_i^\la = \sum_{j=1}^{i-1} (-1)^{i-1-j}d_j^\la$.
\end{enumerate}
\end{Corollary}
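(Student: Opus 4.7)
The plan is to prove both formulas by a straightforward induction on $i \ge 1$, using the recurrence $d_i^\la = x_i^\la + x_{i+1}^\la$ from Lemma \ref{bockstein general} (valid for $i \ge 1$) together with the initial data provided by Lemma \ref{bockstein H0}(b). Rewriting the recurrence as $x_{i+1}^\la = d_i^\la - x_i^\la$ shows that each $x_i^\la$ is determined inductively by the $d_j^\la$ once a starting value $x_1^\la$ is fixed. The two cases of the corollary differ only in this starting value.

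For part (a), assume $\la \ne (n)$. The base case $i=1$ reads $x_1^\la = d_0^\la$, which is exactly the content of Lemma \ref{bockstein H0}(b) in this case. For the inductive step, suppose $x_i^\la = \sum_{j=0}^{i-1}(-1)^{i-1-j}d_j^\la$. Then
$$
x_{i+1}^\la \;=\; d_i^\la - x_i^\la \;=\; d_i^\la - \sum_{j=0}^{i-1}(-1)^{i-1-j}d_j^\la \;=\; \sum_{j=0}^{i}(-1)^{i-j}d_j^\la,
$$
which is the desired formula with $i$ replaced by $i+1$.

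For part (b), assume $\la = (n)$. Here the base case is $i=1$: by Lemma \ref{bockstein H0}(b) we have $x_1^\la = 0$, and the empty sum $\sum_{j=1}^{0}$ in the claim is also $0$. The inductive step is identical to that of (a), except that the summation starts at $j=1$ rather than $j=0$; the absence of the $j=0$ term is consistent because the formula in (b) simply never involves $d_0^\la$.

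Since each step is mechanical, I do not expect a genuine obstacle; the only point to be careful about is bookkeeping the sign $(-1)^{i-1-j}$ and the starting index of the summation, and verifying that the base cases in both (a) and (b) match the conventions of Lemma \ref{bockstein H0}(b) (in particular, that the empty sum convention handles the $\la=(n)$, $i=1$ case correctly).
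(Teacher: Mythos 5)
Your proof is correct and is precisely the induction the paper intends (the paper states the corollary with only the remark that it follows by induction from Lemmas \ref{bockstein H0} and \ref{bockstein general}, which is exactly the argument you spell out). The base cases and the sign bookkeeping in your inductive step both check out.
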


Now Lemma \ref{graph connection} is a direct consequence of Corollary \ref{bockstein cor} and Lemma \ref{bockstein H0}.
\\ % set off

As a last point in this section, we turn our attention to the relation between the $p$-block and the cohomology of a Specht module. Let $F$ be a field with $char(F) = p > 0$, and $M$ an $FG$-module. By \cite[Definition 6.1.2, Theorem 2.7.6]{We} and \cite[(1.2)]{Ev}, we have
$$
H^i(G,M) \cong {\rm Ext}^i_{\Z G}(\Z,M) \cong {\rm Ext}^i_{FG}(F,M) \cong H^i({\rm Hom}_{FG}({\mathbf P},M))
$$
for a projective resolution $\mathbf P$ of the trivial module $F$, consisting of projective modules $P_i, i \ge 0$. This resolution can be chosen such that every $P_i$ belongs to the principal block. If $M$ is an indecomposable $FG$-module that does not belong to the principal block, we have for every $i \ge 0$
$$
H^i(G,M) \cong H^i(Hom_{FG}({\mathbf P},M)) = Hom_{FG}(P_i,M) = 0.
$$

Further, the so-called {\em Nakayama Conjecture} (cf. \cite[6.1.21]{JK}) tells us that the $p$-blocks of $\Sig n$ are characterized by the $p$-cores of the partitions $\la \vdash n$: the modules $\Slp$ and $S_{\Fp}^\mu$ belong to the same block if and only if the $p$-cores of $\la$ and $\mu$ are the same.

The Specht module $\Slp$ belongs to the principal block, that is, the block containing $S_{\Fp}^{(n)}$, if and only if the $p$-core of $\la$ has at most one row. This leads to the following statement:

\begin{Corollary} \label{pcore coho}
If the $p$-core of $\la \vdash n$ has more than one row, we have $\Hlp i = 0$, and hence $p \nmid |\Hlz i|$ for all $i \ge 1$ by Corollary \ref{bockstein cor}.
\end{Corollary}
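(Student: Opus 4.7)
The plan is to reduce the statement to the block-theoretic vanishing principle described just before the corollary. The key preliminary fact I would use is that $\Slp$, although not necessarily indecomposable, lies entirely in a single $p$-block of $\Fp\Sig n$: all composition factors of $\Slp$ are modules $D^\mu$ with $\mu \trianglerighteq \la$ and $\mu$ being $p$-regular, and hence share the $p$-core of $\la$, so exactly one block idempotent acts as the identity on $\Slp$ while the others act as zero.

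Next I would identify the principal block. Writing $n = ap + b$ with $0 \le b < p$, repeated stripping of $p$-hooks from the single row $(n)$ shows that the $p$-core of $(n)$ is the one-row partition $(b)$, which has at most one row. By the Nakayama Conjecture, if the $p$-core of $\la$ has more than one row, then $\Slp$ lies in a $p$-block $B$ distinct from the principal block $B_0$.

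Now I would choose a projective resolution $\mathbf{P}$ of the trivial $\Fp\Sig n$-module $\Fp$ consisting of projectives $P_i \in B_0$. Since $P_i$ and $\Slp$ lie in different blocks, the block idempotent of $B_0$ annihilates $\Slp$, forcing $\mathrm{Hom}_{\Fp\Sig n}(P_i,\Slp) = 0$ for every $i \ge 0$. Hence every term of the cochain complex computing $\Hlp i$ vanishes, giving
$$
\Hlp i \;\cong\; H^i\bigl(\mathrm{Hom}_{\Fp\Sig n}(\mathbf{P},\Slp)\bigr) \;=\; 0 \qquad \mbox{for all } i \ge 0.
$$
The second conclusion $p \nmid |\Hlz i|$ for $i \ge 1$ is then immediate by taking the contrapositive of Corollary \ref{bockstein cor}(b), applied with $\Hlp{i-1} = \Hlp i = 0$.

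The only real obstacle is the block-theoretic statement that $\Slp$ lies in a single $p$-block; once this is granted, the rest is a direct specialisation of the general discussion preceding the corollary, together with a quick computation of the $p$-core of $(n)$.
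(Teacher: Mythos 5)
Your argument is essentially the paper's own: the corollary is presented there as a direct consequence of the block-theoretic discussion immediately preceding it (a projective resolution lying in the principal block, vanishing of $\mathrm{Hom}$ across blocks, the Nakayama conjecture, and Corollary \ref{bockstein cor}), and you reproduce exactly that chain, adding the explicit computation that the $p$-core of $(n)$ is the one-row partition $(b)$ with $n \equiv b \pmod p$. The one clause to repair is your justification that $\Slp$ lies in a single block: dominance $\mu \trianglerighteq \la$ does \emph{not} imply that $\mu$ and $\la$ have the same $p$-core (for instance $(n) \trianglerighteq \la$ for every $\la \vdash n$, while $(n)$ has $p$-core $(b)$), so the ``hence'' there is a non sequitur. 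The single-block property you need is instead the standard fact that the mod-$p$ reduction of a lattice affording an irreducible ordinary character lies in a single block --- precisely the fact that the paper's formulation of the Nakayama conjecture (``$\Slp$ and $S^\mu_{\Fp}$ belong to the same block iff their $p$-cores agree'') already presupposes. With that justification substituted, your proof is correct, and it is in fact slightly more careful than the paper's, since the paper states its $\mathrm{Hom}$-vanishing argument only for indecomposable modules even though $\Slp$ need not be indecomposable when $p=2$.
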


\section{Successors and predecessors} \label{branch}

This section provides the proof of Theorem \ref{Cp structure}. An important ingredient is the {\em Branching Theorem} for Specht modules. It describes the structure of restricted and induced Specht modules. The Specht module $\SlR$, restricted to $\Sigma_{n-1}$, is denoted by $\Slr$, and the Specht module $\SlR$, induced to $\Sigma_{n+1}$, is denoted by $\Sli$.

If $R$ is a field of characteristic $0$, we obtain a direct sum of Specht modules in both cases (cf. \cite[9.2 The Branching Theorem]{J}). Otherwise, we have only a {\em Specht filtration} instead of a direct sum. A Specht filtration of an $R\Sig n$-module $M$ is a series of pure submodules $0 =: M_0 \le M_1 \le \cdots \le M_q := M$, where $M_i/M_{i-1}$ is isomorphic to a Specht module $S^{\mu_i}$ for all $1 \le i \le q$. The series $S^{\mu_1}, \ldots, S^{\mu_q}$ is called a {\em Specht series}. This generalized version of the Branching Theorem holds for any integral domain $R$ (cf. \cite[9.3 Theorem, 17.14 Corollary]{J}.)
\\ % set off

What we want to prove now is the following: Let $p$ be a prime and $\la \vdash n \in \Nn$ with $p \Md |\Hlz i|$.
\begin{enumerate}
\item There exists $\mu \in \la+$ with $p \Md |H^i(\Sig {n+1}, \Smz)|$.
\item If $p \nmid n$, then there exists $\mu \in \la-$ with $p \Md |H^i(\Sig {n-1}, \Smz)|$.
\end{enumerate}
\begin{proof}
(a) By {\em Shapiro's Lemma} (cf. \cite[Lemmas 6.3.2, 6.3.4]{We}), we have $\Hlz{i} \cong H^i({\Sig n}_{+1},\Slzi)$. By the Branching Theorem, $\Slzi$ has a Specht series whose factors range over $\la+$. Thus, the prime divisor $p$ of $|\Hlz{i}| = |H^i(\Sig{n+1},\Slzi)|$ has to divide $|H^i(\Sig{n+1},\Smz)|$ for some $\mu \in \la+$.

(b) There is an $\alpha \in \Hlz{i}$ with order $p$. Since $p \nmid n$ we have
$$
0 \ne n\alpha = [\Sig n : \Sig {n-1}]\alpha = {\rm tr} \circ \res^{\Sig n}_{\Sig{n-1}} (\alpha)
$$
(cf. \cite[Transfer Maps 6.7.16, Lemma 6.7.17]{We}). Thus we have $\res^{\Sig n}_{\Sig{n-1}} (\alpha) \ne 0$. Hence, the order of $\res^{\Sig n}_{\Sig{n-1}} (\alpha)$ is $p$, and so $p \Md |H^i({\Sig n}_{-1},\Slzr)|$. By the Branching Theorem, $\Slzr$ has a Specht series whose factors range over $\la-$. Thus, the prime divisor $p$ has to divide $|H^i({\Sig n}_{-1},S^\mu_\Z)|$ for some $\mu \in \la-$.
\end{proof}

\section{Cohomology of Specht modules over $\Fp$} \label{comp}

In \cite[5. Some applications]{BKM}, low-degree cohomology of irreducible $\Fp\Sig n$-modules belonging to hook partitions and certain two-part partitions is considered. If the composition series of a Specht module consists only of such modules, we can determine the cohomology of this Specht module via long exact cohomology sequences in some cases. That is the aim of this section.

In the following let $p$ be a prime and $\la \vdash n \in \Nn$. If $\la$ is $p$-regular (that is, if $a_i < p$ for $\la = (\tilde{\la}_1^{a_1}, \tilde{\la}_2^{a_2}, \ldots, \tilde{\la}_m^{a_m})$) with $\tilde{\la}_1 > \ldots > \tilde{\la}_m$, then $\Slp$ has a unique irreducible quotient, denoted by $D^\la = \Slp/\rad(\Slp)$. These $D^\la$ form a complete set of representatives for the irreducible $\Fp\Sig n$-modules. Further, if $D$ is a composition factor of $rad(\Slp)$, then $D \cong D^\mu$ for some $\mu \rhd \la$, where $\lhd$ denotes the standard partial ({\em dominance}) order on partitions. And if $\la$ is not $p$-regular, all the composition factors of $\Slp$ have the form $D^\mu$ with $\mu \rhd \la$ (cf. \cite[7.1.14 Theorem]{JK}).
\\ % set off

For the case $p=2$ the reader is referred to \cite{KP}. In the following, as in \cite{BKM}, let $p$ be an odd prime.

First we take a look at hook partitions. In the case of odd primes we know their composition series (cf. \cite[24.1 Theorem]{J}): Let $\la := (n-j,1^j)$ for some $0 \le j \le n-1$.
\begin{enumerate}
\item If $p \nmid n$, then $\Slp$ is irreducible.
\item If $p \Md n$, there exist distinct irreducible modules $h_{n,k}$, $0 \le k \le n-2$, such that $\Slp$ has a composition series $0 \le M \le \Slp$ with $M \cong h_{n,j-1}$ and $\Slp/M \cong h_{n,j}$, where $h_{n,-1}$ and $h_{n,n-1}$ are interpreted as zero.
\end{enumerate}
Note that $h_{n,j} = D^\la$ if $\la$ is $p$-regular.

From \cite{BKM} we obtain the following statements:

\begin{Proposition} \label{H1hook} {\rm \cite[Proposition 5.2]{BKM}.}
We have $H^1(\Sig n, h_{n,j}) = 0$, except when
\begin{enumerate}
\item $p \mid n$, $j=1$;
\item $p = 3$, $j = 3$.
\end{enumerate}
In the exceptional cases $\dim H^1(\Sig n, h_{n,j}) = 1$.
\end{Proposition}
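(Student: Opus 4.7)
The plan is to split according to whether $p$ divides $n$, use the composition structure of the hook Specht modules recalled above, and extract $H^1$ via long exact cohomology sequences. If $p \nmid n$, then $h_{n,j}$ coincides with the Specht module $S^{(n-j,1^j)}_{\Fp}$, so the question reduces to $H^1$ of a hook Specht module. If $p \mid n$, the short exact sequence
\[
0 \lra h_{n,j-1} \lra S^{(n-j,1^j)}_{\Fp} \lra h_{n,j} \lra 0
\]
yields a long exact cohomology sequence linking $H^1(\Sig n, h_{n,j})$ to $H^i(\Sig n, S^{(n-j,1^j)}_{\Fp})$ and $H^i(\Sig n, h_{n,j-1})$ for $i \le 2$, and I would induct on $j$ using this.

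First I would compute the relevant cohomology of the Specht modules $S^{(n-j,1^j)}_{\Fp}$ via the integral Specht module and the Bockstein framework of Section \ref{bock}. By Lemma \ref{bockstein H0}, $H^0$ vanishes unless $j=0$. For $H^1$, the Corollary following Lemma \ref{graph connection} gives a clean criterion: writing $\la=(n-j,1^j)$, the condition $\la_2=1\equiv -1\pmod{p}$ fails for every odd $p$ as soon as $j\ge 2$, and for $j=1$ the surviving condition $\la_1\equiv -1\pmod{p}$ is exactly $p\mid n$. Via Lemma \ref{bockstein general} this yields $\dim H^1(\Sig n,S^{(n-j,1^j)}_{\Fp}) = x_1^\la + x_2^\la$, the first summand being understood by the above, the second requiring control of the $p$-rank of $H^2$ of the integral Specht module.

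Next I would feed this into the long exact cohomology sequence (when $p\mid n$) or read off the result directly (when $p\nmid n$) and run the induction on $j$. The exception at $p\mid n$, $j=1$ emerges because the one-dimensional contribution to $H^1$ of $S^{(n-1,1)}_{\Fp}$ survives through both the trivial submodule $h_{n,0}=\Fp$ and the quotient $h_{n,1}$. The exception at $p=3$, $j=3$ corresponds, in the above vocabulary, to $x_2^\la=1$ for $\la=(n-3,1,1,1)$; I would verify this directly, for instance via an explicit model of the integral Specht module as a submodule of $\bigwedge^3 V_\Z$, where $V_\Z$ is the standard $(n-1)$-dimensional lattice, or by the Zassenhaus-type calculations of Section \ref{zas}.

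The main obstacle is obtaining \emph{exact} dimensions, not merely bounds, from the long exact sequence. The Bockstein framework furnishes divisibility information but not the precise $p$-rank of $H^2(\Sig n,S^\la_\Z)$, and it is there that the small-prime coincidence at $p=3$, $j=3$ lives. Ruling out spurious extra contributions and pinning down the dimension to exactly $1$ in the exceptional case would in my hands require either an explicit $1$-cocycle, an analysis of the $\mathrm{Ext}^1$-quiver of the principal $3$-block of $\Sig n$, or the computer-algebraic techniques of Section \ref{zas}, rather than anything purely formal.
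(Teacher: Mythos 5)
The paper does not prove this statement at all: it is imported verbatim from \cite[Proposition 5.2]{BKM}, and everything in Section \ref{comp} is built \emph{on top of} it. Measured against that, your proposal runs the paper's logic backwards and is circular. The paper's route is: take $H^1(\Sig n, h_{n,j})$ and $H^2(\Sig n, h_{n,j})$ as known from \cite{BKM}, deduce $d_1^\la = \dim \Hlp 1$ for hook Specht modules via the long exact sequence (Corollary \ref{hook coho 1}), and only then extract $x_2^\la = d_1^\la - d_0^\la$, the $p$-rank of $\Hlz 2$ (e.g.\ Lemma \ref{(n-2,1^2) in Cp2}). You propose instead to compute $d_1^\la = x_1^\la + x_2^\la$ first and push it through the long exact sequence to recover $H^1(\Sig n, h_{n,j})$. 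The summand $x_1^\la = d_0^\la$ is indeed under control (it is $0$ or $1$ by \cite[24.4 Theorem]{J}), but $x_2^\la$ --- the $p$-rank of $H^2(\Sig n, \Slz)$ for hook partitions --- is precisely the quantity the whole paper is struggling to determine, and no independent, general-$n$ argument for it exists in the text; the Zassenhaus computations of Section \ref{zas} only reach $n \le 20$ (partially) and cannot substitute for a proof. You acknowledge this in your final paragraph, which means the proposal stops exactly where the mathematical content of the proposition begins: the vanishing for $j \ge 2$ away from the exceptions, and the coincidence at $p=3$, $j=3$, are never actually established.

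There is a second structural problem with the proposed induction on $j$. The long exact sequence attached to $0 \to h_{n,j-1} \to S^{(n-j,1^j)}_{\Fp} \to h_{n,j} \to 0$ continues
$$
\cdots \lra H^1(\Sig n, S^{(n-j,1^j)}_{\Fp}) \lra H^1(\Sig n, h_{n,j}) \lra H^2(\Sig n, h_{n,j-1}) \lra \cdots,
$$
so isolating $H^1(\Sig n, h_{n,j})$ requires control of $H^2(\Sig n, h_{n,j-1})$, i.e.\ Proposition \ref{H2hook}, which you would then have to prove simultaneously; that in turn drags in $d_2^\la = x_2^\la + x_3^\la$ and hence the $p$-rank of $H^3(\Sig n,\Slz)$, and the regress never closes. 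A self-contained proof has to break out of this circle with independent input --- in \cite{BKM} this is done by different techniques (restriction to suitable subgroups and direct analysis of low-degree cohomology of the irreducibles), not by the Bockstein bookkeeping of Section \ref{bock}.
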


\begin{Proposition} \label{H2hook} {\rm \cite[Proposition 5.3]{BKM}.}
Assume that $j \ne 3$ if $p = 3 \mid n$. Then $H^2(\Sig n, h_{n,j}) = 0$, except when
\begin{enumerate}
\item $p \mid n$, $j = 2$;
\item $p = 3$, $j = 6$;
\item $p = 3 \mid n$, $j = 4$;
\item $p = 3 \nmid n$, $j = 3$;
\item $p = 3 = n$, $j=1$.
\end{enumerate}
In the exceptional cases $\dim H^2(\Sig n, h_{n,j}) = 1$.
\end{Proposition}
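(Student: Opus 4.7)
The plan is to proceed by induction on $j$, combining a short exact sequence relating the Specht module $\Slp$ for $\la = (n-j,1^j)$ with the two hook irreducibles, together with Shapiro's lemma applied to the associated Young permutation module.

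When $p \nmid n$, the Specht module $\Slp$ is irreducible and coincides with $h_{n,j}$, so the task reduces to computing $H^2(\Sig n, \Slp)$ directly. When $p \mid n$, the two-step composition structure recalled above gives
$$
0 \lra h_{n,j-1} \lra \Slp \lra h_{n,j} \lra 0,
$$
and the long exact cohomology sequence
$$
H^1(\Sig n, h_{n,j}) \lra H^2(\Sig n, h_{n,j-1}) \lra H^2(\Sig n, \Slp) \lra H^2(\Sig n, h_{n,j}) \lra H^3(\Sig n, h_{n,j-1})
$$
ties all of the desired groups to $H^2(\Sig n, \Slp)$. Proposition \ref{H1hook} controls the left tail, and an inductive hypothesis on $j$ handles the neighbouring hook contributions, so everything reduces to the middle term.

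For $H^2(\Sig n, \Slp)$ I would use the standard embedding of $\Slp$ into the Young permutation module $M = \Fp\!\uparrow^{\Sig n}_{\Sig{n-j}\times\Sig j}$, whose cohomology Shapiro identifies with $H^*(\Sig{n-j}\times\Sig j, \Fp)$. By K\"unneth this splits into summands $H^a(\Sig{n-j},\Fp)\otimes H^b(\Sig j,\Fp)$ with $a+b \le 2$, and the low-degree trivial-coefficient cohomology of symmetric groups is known and vanishes outside a small explicit range of $(m,p)$. The quotient $M/\Slp$ carries a Specht filtration by $S^\mu_{\Fp}$ with $\mu \rhd \la$, which for hook $\la$ consists precisely of hooks with strictly smaller arm length; iterated long exact sequences together with the inductive hypothesis should then extract $H^2(\Sig n, \Slp)$.

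The main obstacle is the bookkeeping at $p=3$: almost every exceptional case listed occurs at $p=3$, where $H^*(\Sig m, \F_3)$ is nontrivial in low degrees for many values of $m$, so numerous summands of the K\"unneth decomposition can survive and several long exact sequences have to be tracked in parallel. Deciding whether the various connecting homomorphisms vanish requires either explicit computation for small $n$ or a uniform stability argument showing that everything collapses once $n$ is sufficiently large. The explicit exclusion $j \ne 3$ when $p=3\mid n$ signals precisely the step where the induction genuinely breaks down: the nonvanishing term $H^1(\Sig n, h_{n,3})$ supplied by Proposition \ref{H1hook} feeds nontrivially into $H^2(\Sig n, h_{n,2})$ and has to be analysed separately. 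I would finish by checking the exceptional cases (c)--(e) directly in small rank (by hand or via the computational approach of Section \ref{zas}) and verifying consistency with cases (a), (b) at $p\ne 3$.
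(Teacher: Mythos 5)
The paper does not prove this proposition: it is quoted verbatim from \cite[Proposition 5.3]{BKM} (note the citation attached to the statement and the preceding sentence ``From \cite{BKM} we obtain the following statements''), and is then used only as input for Corollaries \ref{hook coho 1} and \ref{hook coho 2} and Lemma \ref{(n-2,1^2) in Cp2}. So there is no in-paper argument to compare yours against; what you have written is an outline for reproving the cited result, and as an outline it has two concrete defects.

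First, the induced module you propose is the wrong one for hooks. $\Fp\!\uparrow^{\Sig n}_{\Sig{n-j}\times\Sig{j}}$ is the Young permutation module $M^{(n-j,j)}$; its Specht filtration consists of the two-row modules $S^{(n-i,i)}_{\Fp}$, $0\le i\le j$, and it does not contain $S^{(n-j,1^j)}_{\Fp}$ at all, while the actual Young module $M^{(n-j,1^j)}$ is induced from $\Sig{n-j}$ alone and its Specht filtration is far from being ``precisely hooks of smaller arm length''. The construction that works is the sign-twisted one: $\Lambda^j M^{(n-1,1)}\cong(\mathrm{sgn}\boxtimes\Fp)\!\uparrow^{\Sig n}_{\Sig{j}\times\Sig{n-j}}$, which sits in $0\to S^{(n-j,1^j)}_{\Fp}\to\Lambda^jM^{(n-1,1)}\to S^{(n-j+1,1^{j-1})}_{\Fp}\to 0$, so that Shapiro and K\"unneth produce $H^*(\Sig{j},\mathrm{sgn})\otimes H^*(\Sig{n-j},\Fp)$ rather than the groups you name. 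Second, and more seriously, your induction on $j$ does not close. In the sequence
$$
H^1(\Sig n,h_{n,j})\lra H^2(\Sig n,h_{n,j-1})\lra H^2(\Sig n,\Slp)\lra H^2(\Sig n,h_{n,j})\lra H^3(\Sig n,h_{n,j-1}),
$$
knowing $H^2(\Sig n,\Slp)$, $H^2(\Sig n,h_{n,j-1})$ and $H^1(\Sig n,h_{n,j})$ determines only the image of $H^2(\Sig n,\Slp)$ in $H^2(\Sig n,h_{n,j})$; the cokernel injects into $H^3(\Sig n,h_{n,j-1})$, about which your inductive hypothesis says nothing. Hence you can never conclude the vanishing $H^2(\Sig n,h_{n,j})=0$, which is the bulk of the assertion, and the exact dimensions in the exceptional cases likewise remain undetermined. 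Together with the $p=3$ connecting-map analysis that you explicitly defer, this is precisely the hard content of \cite{BKM}; within the present paper the proposition is an imported black box, not something recoverable by the bookkeeping you describe.
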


Now we have only to consider the reducible Specht modules. For the following three corollaries we suppose that $p \Md n$ and $\la := (n-j,1^j)$ for some $1 \le j \le n-2$, where $j \ne 2$ if $p = 3 = n$.

\begin{Corollary} \label{hook coho 0}
Since we can view zero cohomology as the set of fixed points, we have
$$
d^\la_0 = \left\{ \begin{array}{ll} 1 & \mbox{for } j = 1, \\ 0, & \mbox{otherwise.} \end{array} \right.
$$
\end{Corollary}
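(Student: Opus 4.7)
The plan is to read the hint in the statement literally: identify $d_0^\la = \dim H^0(\Sig n, \Slp)$ with the dimension of the $\Sig n$-fixed subspace of $\Slp$, and then combine this with the two-step composition structure of $\Slp = S^{(n-j,1^j)}_{\Fp}$ recalled just above the corollary.

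Concretely, since $p \Md n$ and $1 \le j \le n-2$, there is a short exact sequence of $\Fp\Sig n$-modules
$$
0 \lra h_{n,j-1} \lra \Slp \lra h_{n,j} \lra 0,
$$
and applying the left-exact functor $H^0(\Sig n,-) = \mathrm{Hom}_{\Fp\Sig n}(\Fp,-)$ produces
$$
0 \lra H^0(\Sig n, h_{n,j-1}) \lra H^0(\Sig n, \Slp) \lra H^0(\Sig n, h_{n,j}).
$$
This reduces the problem to computing $H^0(\Sig n, h_{n,k})$ for $k \in \{j-1,j\}$.

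Since each $h_{n,k}$ is irreducible, Schur's lemma gives $\dim H^0(\Sig n, h_{n,k}) = 1$ if $h_{n,k}$ is the trivial module and $0$ otherwise. The trivial module is $h_{n,0}$: specialising the composition description to $\la = (n)$ (so $j=0$) and using $h_{n,-1} = 0$ yields $h_{n,0} \cong S^{(n)}_{\Fp} \cong \Fp$, while the $h_{n,k}$ for $0 \le k \le n-2$ are pairwise non-isomorphic. Thus a trivial composition factor of $\Slp$ appears only when $0 \in \{j-1,j\}$; in the range $1 \le j \le n-2$ this forces $j = 1$, in which case the trivial module sits at the bottom of the composition series as the submodule $M$.

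Hence, for $j = 1$ the sequence becomes $0 \to \Fp \to H^0(\Sig n, \Slp) \to 0$, giving $d_0^\la = 1$, while for $2 \le j \le n-2$ both outer terms vanish, giving $d_0^\la = 0$. I do not foresee any real obstacle: once the short exact sequence is in hand and $h_{n,0}$ is identified as the trivial module, the result drops out purely from left-exactness of $H^0$ and the irreducibility of the $h_{n,k}$.
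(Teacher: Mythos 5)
Your argument is correct and is essentially the paper's own justification made explicit: the paper proves this corollary only by the remark that $H^0$ is the fixed-point space, relying on the two-step composition series $0 \le M \le \Slp$ with $M \cong h_{n,j-1}$, $\Slp/M \cong h_{n,j}$ recalled just above. Your elaboration via left-exactness of $H^0$, the identification $h_{n,0} \cong D^{(n)}$, and Schur's lemma for the irreducible, pairwise non-isomorphic $h_{n,k}$ fills in exactly the intended details.
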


\begin{Corollary} \label{hook coho 1}
If $[p,j] \ne [3,3]$, then we have
$$
d^\la_1 = \left\{ \begin{array}{ll} 1 & \mbox{for } j \in \{1,2\} \mbox{ or } [p,j]=[3,4], \\ 0, & \mbox{otherwise.} \end{array} \right.
$$
For $[p,j] = [3,3]$ we have $d^\la_1 \le 1$.
\end{Corollary}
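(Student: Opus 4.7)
The plan is to apply the long exact cohomology sequence to the short exact sequence of $\Fp\Sig n$-modules
$$
0 \ra h_{n,j-1} \ra \Slp \ra h_{n,j} \ra 0
$$
which is furnished, since $p \mid n$, by the composition series recalled before Proposition \ref{H1hook}, and to read $d^\la_1$ off the segment
$$
H^0(\Sig n, h_{n,j}) \ra H^1(\Sig n, h_{n,j-1}) \ra H^1(\Sig n, \Slp) \ra H^1(\Sig n, h_{n,j}) \ra H^2(\Sig n, h_{n,j-1}),
$$
using Propositions \ref{H1hook} and \ref{H2hook} to compute the outer terms. The auxiliary inputs are that $h_{n,0}$ is the trivial module, so $H^0(\Sig n, h_{n,0}) = \Fp$ while $H^2(\Sig n, h_{n,0}) = 0$ by Proposition \ref{H2hook} with $j = 0$ (for which no exception applies); and that each $h_{n,k}$ with $k \ge 1$ is a nontrivial irreducible module, hence $H^0(\Sig n, h_{n,k}) = 0$.

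The generic cases are a short bookkeeping exercise. For $j = 1$ the submodule is trivial and Proposition \ref{H1hook}(a) gives $H^1(\Sig n, h_{n,1}) = \Fp$, while the outer terms $H^0(\Sig n, h_{n,1})$ and $H^2(\Sig n, h_{n,0})$ vanish, so $d^\la_1 = 1$. For $j = 2$ the same Proposition yields $H^1(\Sig n, h_{n,1}) = \Fp$ and $H^1(\Sig n, h_{n,2}) = 0$, forcing $d^\la_1 = 1$. For $[p,j] = [3,4]$ Proposition \ref{H1hook}(b) gives $H^1(\Sig n, h_{n,3}) = \Fp$ and $H^1(\Sig n, h_{n,4}) = 0$, so $d^\la_1 = 1$. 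In every remaining case with $[p,j] \ne [3,3]$, Proposition \ref{H1hook} shows that both $H^1(\Sig n, h_{n,j-1})$ and $H^1(\Sig n, h_{n,j})$ vanish, whence $d^\la_1 = 0$.

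The only non-routine case is $[p,j] = [3,3]$, where the sub and quotient are $h_{n,2}$ and $h_{n,3}$. Here $H^1(\Sig n, h_{n,2}) = 0$, $H^1(\Sig n, h_{n,3}) = \Fp$ by Proposition \ref{H1hook}(b), and Proposition \ref{H2hook}(a), applied with $j = 2$ (still within its stated domain), yields $H^2(\Sig n, h_{n,2}) = \Fp$. Together with $H^0(\Sig n, h_{n,3}) = 0$, the relevant segment becomes $0 \ra H^1(\Sig n, \Slp) \ra \Fp \ra \Fp$, so $H^1(\Sig n, \Slp)$ embeds into $\Fp$ and $d^\la_1 \le 1$. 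The main obstacle is precisely that the connecting map $H^1(\Sig n, h_{n,3}) \ra H^2(\Sig n, h_{n,2})$ involves $H^2(\Sig n, h_{n,3})$ via its target's Ext-structure, and $h_{n,3}$ lies in the range excluded by Proposition \ref{H2hook} when $p = 3 \mid n$; the tools at hand therefore do not decide whether $d^\la_1$ equals $0$ or $1$, leaving only the bound.
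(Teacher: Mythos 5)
Your argument is correct and is essentially the paper's own proof: the same long exact cohomology sequence attached to $0 \to h_{n,j-1} \to \Slp \to h_{n,j} \to 0$, evaluated term by term with Propositions \ref{H1hook} and \ref{H2hook}, with the same undetermined connecting map $H^1(\Sig n,h_{n,3}) \to H^2(\Sig n,h_{n,2})$ leaving only the bound $d^\la_1 \le 1$ when $[p,j]=[3,3]$. The only small point to add is that in the case $j=1$ you should also record $H^1(\Sig n,h_{n,0})=0$ (immediate from Proposition \ref{H1hook} with index $0$), since the vanishing of $H^0(\Sig n,h_{n,1})$ alone only makes the submodule's $H^1$ inject into $H^1(\Sig n,\Slp)$ rather than contribute nothing.
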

\begin{proof}
For $j=1$ we get by Proposition \ref{H2hook} the exact cohomology sequence
$$
0 = H^1(\Sig n,D^{(n)}) \lra \Hlp 1
$$
$$
\lra H^1(\Sig n,D^{(n-1,1)}) \lra H^2(\Sig n,D^{(n)}) = 0.
$$
In this case we have $\Hlp 1 \cong H^1(\Sig n,D^{(n-1,1)}) \cong \Fp$ by Proposition \ref{H1hook}.

For $j > 1$ and $[p,j] \ne [3,3]$ we get by Corollary \ref{hook coho 0} and Proposition \ref{H1hook} the exact cohomology sequence
$$
0 = H^0(\Sig n,h_{n,j}) \lra H^1(\Sig n,h_{n,j-1})
$$
$$
\lra \Hlp 1 \lra H^1(\Sig n,h_{n,j}) = 0.
$$
In this case we have by Proposition \ref{H1hook}
$$
\Hlp 1 \cong H^1(\Sig n,h_{n,j-1}) \left\{ \begin{array}{ll} \cong \Fp & \mbox{for } j = 2 \mbox{ or } [p,j]=[3,4], \\ = 0, & \mbox{otherwise.} \end{array} \right.
$$

Now let $p = j = 3$. In this case we have the exact cohomology sequence
$$
0 = H^1(\Sig n,D^{(n-2,1^2)}) \lra \Hlp 1 \lra H^1(\Sig n,h_{n,3}) \cong \F_3
$$
by Proposition \ref{H1hook}. Since we do not know enough about the continuation of the sequence, the only thing we can say is that $\dim(H^1(\Sig n,S^{(n-3,1^3)}_{\F_3})) \le 1$.
\end{proof}

\begin{Corollary} \label{hook coho 2}
\begin{enumerate}
\item For $p>3$ we have
$$
d^\la_2 \left\{ \begin{array}{ll}
  = 0, & \mbox{ if } j=1 \mbox{ or } j \ge 4, \\
  = 1, & \mbox{ if } j\in\{2,3\}.
\end{array} \right.
$$
\item For $p=3$ we have
$$
d^\la_2 \left\{ \begin{array}{ll}
        = 0, & \mbox{ if } j \ge 8 \mbox{ or } n>3,j=1, \\
      \le 1, & \mbox{ if } j=6 \mbox{ or } n=3,j=1, \\
        = 1, & \mbox{ if } j \in \{2,5,7\}. \\
\end{array} \right.
$$
\end{enumerate}
\end{Corollary}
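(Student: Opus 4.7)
The plan is to mirror the proof of Corollary \ref{hook coho 1}: since $p \Md n$ and $1 \le j \le n-2$, the Specht module $\Slp$ fits into the short exact sequence $0 \to h_{n,j-1} \to \Slp \to h_{n,j} \to 0$, and the segment
$$
H^1(\Sig n, h_{n,j}) \lra H^2(\Sig n, h_{n,j-1}) \lra \Hlp 2 \lra H^2(\Sig n, h_{n,j}) \lra H^3(\Sig n, h_{n,j-1})
$$
of the associated long exact cohomology sequence controls $d_2^\la$. The values of $H^1(\Sig n, h_{n,k})$ and $H^2(\Sig n, h_{n,k})$ supplied by Propositions \ref{H1hook} and \ref{H2hook} will do most of the work, and the Bockstein identity of Corollary \ref{bockstein specht} will provide the matching lower bound in the cases where the long exact sequence alone is not sharp.

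For the upper bounds I would run a case analysis on $j$. In the generic case both $H^2(\Sig n, h_{n,j-1})$ and $H^2(\Sig n, h_{n,j})$ vanish, forcing $\Hlp 2 = 0$; this disposes of $j = 1$ for $p > 3$ and for $p = 3 \Md n$ with $n > 3$, as well as $j \ge 4$ for $p > 3$ and $j \ge 8$ for $p = 3$. When only the left neighbour contributes $H^2 \cong \Fp$ and in addition $H^1(\Sig n, h_{n,j}) = 0$, the sequence collapses to $0 \to \Fp \to \Hlp 2 \to 0$, yielding $\Hlp 2 \cong \Fp$ and handling $j = 3$ for $p > 3$ and $j \in \{5,7\}$ for $p = 3$. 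When only the right neighbour contributes, the relevant piece reads $0 \to \Hlp 2 \to \Fp \to H^3(\Sig n, h_{n,j-1})$ and yields only $d_2^\la \le 1$, which is exactly what the statement claims for $j = 6$ (with $p = 3$) and for $n = 3,\, j = 1$.

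The remaining task is to promote $d_2^\la \le 1$ to $d_2^\la = 1$ in the case $j = 2$. Here I would invoke Corollary \ref{bockstein specht}(a): since $\la \ne (n)$,
$$
d_2^\la = x_2^\la + x_3^\la \ge x_2^\la = d_1^\la - d_0^\la.
$$
Corollaries \ref{hook coho 0} and \ref{hook coho 1} give $d_0^\la = 0$ and $d_1^\la = 1$ for $j = 2$, both in the $p > 3$ branch and (since $[3,2] \ne [3,3]$) in the $p = 3$ branch, so $d_2^\la \ge 1$ and the inequality becomes an equality.

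The main obstacle is the endpoint $H^3(\Sig n, h_{n,j-1})$ of the long exact sequence, whose dimension is not provided by \cite{BKM}: whenever a nonzero $H^2(\Sig n, h_{n,j})$ sits to the right of $\Hlp 2$, the sequence can only bound $d_2^\la$ from above, and neither the Bockstein inequality nor the LES is strong enough to close the gap. The same missing input $H^2(\Sig n, h_{n,3})$, explicitly excluded from Proposition \ref{H2hook} when $p = 3 \Md n$, is why the corollary stays silent about $j = 3$ and $j = 4$ in that regime: for each of those $j$ the forbidden hook appears as a neighbour on one side of the sequence, and one cannot read off even an upper bound from the tools at hand.
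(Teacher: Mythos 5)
Your proposal is correct and follows essentially the same route as the paper: the long exact cohomology sequence attached to $0 \to h_{n,j-1} \to \Slp \to h_{n,j} \to 0$, evaluated via Propositions \ref{H1hook} and \ref{H2hook}, plus the Bockstein inequality $d_2^\la \ge x_2^\la = d_1^\la - d_0^\la$ from Lemma \ref{bockstein general} and Corollaries \ref{bockstein specht}, \ref{hook coho 0}, \ref{hook coho 1} to upgrade $d_2^\la \le 1$ to equality when $j=2$. Your case analysis (including the explanation of why $p=3$, $j\in\{3,4\}$ must be left open because $H^2(\Sig n,h_{n,3})$ is excluded from Proposition \ref{H2hook}) is exactly the "similar argumentation" the paper leaves implicit.
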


\begin{proof} We consider the exact cohomology sequence
$$
H^1(\Sig n,h_{n,j}) \lra H^2(\Sig n,h_{n,j-1}) \lra \Hlp 2 \lra H^2(\Sig n,h_{n,j}).
$$
If we put in the results from Propositions \ref{H1hook} and \ref{H2hook}, we obtain the above statements by a similar argumentation as in Corollary \ref{hook coho 1}. For the cases $p>3,j=2$ and $p=3<n,j=2$ we additionally need
$$
d^\la_2 = x^\la_2 + x^\la_3 \ge x^\la_2 = d^\la_1 - d^\la_0 = 1
$$
by Lemma \ref{bockstein general} and Corollaries \ref{bockstein specht}, \ref{hook coho 0}, and \ref{hook coho 1}.
\end{proof}
Note that we do not have any statement if $p = 3$ and $j \in \{3,4\}$.
\\ % set off

Now we are going to take a closer look at partitions with two parts. For this we need the following relation on positive integers: For $a,b \in \N$ let the $p$-adic expansions be given by $a := \sum_{i=0}^r a_i p^i$ and $ b := \sum_{i=0}^s b_i p^i$ with $a_r \ne 0 \ne b_s$. Then we write $a \cp b$ if and only if $r < s$ and $a_i \in \{0,b_i\}$ for all $1 \le i \le r$.

Now let $\la := (n-m,m)$, $m \ge 1$. Every composition factor of $\Slp$ has the form $D^{(n-j,j)}$ for some $0 \le j \le m$. The multiplicity of $D^{(n-j,j)}$ as composition factor of $\Slp$ is 1 for $m-j \cp n - 2j + 1$ and 0 otherwise (cf. \cite[24.15 Theorem]{J}).

For reasons that become clear in Section \ref{path}, we are especially interested in composition series of the Specht modules $\Slp$ for $\la := (n-p,p)$. In a first step we determine the set of composition factors, and in a second step we determine their order.

For the following two corollaries let $n \ge 2p$, $\la := (n-p,p)$ and $n+1 \cmp j$ for some $0 \le j \le p-1$.

\begin{Corollary} \label{compfac 2p}
\begin{enumerate}
\item Let $1 \le k \le p-1$. Then $D^{(n-k,k)}$ is a composition factor of $\Slp$ if and only if $k = j$.
\item $D^{(n)}$ is a composition factor of $\Slp$ if and only if $n+1 \cmps {p+j}$.
\end{enumerate}
\end{Corollary}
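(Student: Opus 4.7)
The plan is to reduce both parts to the multiplicity criterion stated just above the corollary: $D^{(n-k,k)}$ is a composition factor of $\Slp$ (with multiplicity one) if and only if $p-k \cp n-2k+1$. The whole proof then consists of unpacking the relation $\cp$ in the two special cases, using only $n\ge 2p$ and $n+1\cmp j$.

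For part (a), I would observe that for $1\le k\le p-1$ the integer $p-k$ lies strictly between $0$ and $p$, so its $p$-adic expansion has the single nonzero digit $p-k$ at position $0$. The definition of $\cp$ then forces two things: that the position-$0$ digit of $n-2k+1$ equals $p-k$, and that the leading $p$-adic position of $n-2k+1$ is strictly positive, i.e.\ $n-2k+1\ge p$. The digit condition rewrites as $n+1\cmp k$, which together with the standing hypothesis $n+1\cmp j$ and the range $1\le k\le p-1$ forces $k=j$ (and excludes any such $k$ when $j=0$, matching the iff statement vacuously). The size condition is then to be verified at $k=j$: writing $n+1=j+mp$, the inequality $n\ge 2p$ gives $m\ge 2$, hence $n-2j+1=mp-j\ge p+1$.

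For part (b) we have $k=0$, and the criterion becomes $p\cp n+1$. Now the $p$-adic expansion of $p$ itself consists of a single nonzero digit $1$ at position $1$, so $\cp$ demands that $(n+1)_1=1$ and that the leading position of $n+1$ exceeds $1$, i.e.\ $n+1\ge p^2$. Combined with the given $(n+1)_0=j$, the first of these is exactly $n+1\cmps{p+j}$. The residual task is to see, under $n\ge 2p$, that the congruence $n+1\cmps{p+j}$ already forces $n+1\ge p^2$: otherwise $n+1=p+j\le 2p-1$, giving $n\le 2p-2$, a contradiction. Hence $p\cp n+1$ collapses to $n+1\cmps{p+j}$.

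The main obstacle is definitional rather than computational: one has to keep track of the precise digit and leading-position conditions packaged into $\cp$, and check that the hypothesis $n\ge 2p$ is exactly what rules out the degenerate low cases ($n-2j+1<p$ in (a) and $n+1=p+j$ in (b)). Once that bookkeeping is clean, both statements follow by inspecting $p$-adic digits, with no further representation-theoretic input beyond the cited multiplicity formula.
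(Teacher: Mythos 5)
Your proof is correct and follows essentially the same route as the paper: both reduce to the multiplicity criterion $p-k \cp n-2k+1$ from \cite[24.15 Theorem]{J} and unpack the two conditions hidden in $\cp$ (the digit-$0$ match, giving $n+1 \cmp k$ and hence $k=j$, and the requirement that $n-2k+1$ resp.\ $n+1$ have a higher leading $p$-adic position, which $n \ge 2p$ guarantees at $k=j$ and rules out in the degenerate cases). The only difference is organizational — the paper first splits off the range $\frac{n+2-p}{2} \le k \le p-1$ where $n-2k+1 < p$, whereas you derive $k=j$ from the digit condition and then verify the size condition only there — and both are sound.
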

\begin{proof}
(a) If $\frac{n+2-p}{2} \le k \le p-1$, then $D^{(n-k,k)}$ is no composition factor of $\Slp$, because then we have $1 \le n - 2k + 1 \le p-1$ and $1 \le p - k \le p-1$. This means that $p-k \not\cp n-2k+1$, because the highest exponents in their $p$-adic expansions are both 0. (Such a $k$ exists if and only if $n \le 3p - 4$.)

Now let $k < \frac{n+2-p}{2}$, which is equivalent to $n-2k+1 \ge p$. Let $\pad$ be the $p$-adic expansion of $n-2k+1$, where $c_r \ne 0$. Then we have $r \ge 1$; further, we have $p-k \cp n-2k+1$ if and only if $c_0 = p-k$. This is equivalent to $n-2k+1 \cmp {p-k}$. And this is equivalent to $n+1 \cmp k$, which means $k=j$. Note that
$$
\frac{n+2-p}{2} \ge \frac{2p+j+1-p}{2} > j.
$$
Now the statement follows.

(b) Let $n+1 := \pad$, where $c_r \ne 0$. Then we have $c_0 = j$, which means that $D^{(n)}$ is a composition factor of $\Slp$ if and only if $p-0 \cp n+1-0$. This is the case if and only if $r \ge 2$ and $c_1 = 1$. And this is equivalent to $n+1 \cmps {p+j}$, because $n+1 > 2p > p+j$.
\end{proof}

\begin{Corollary} \label{comp order}
The composition series of $\Slp$ has one of the following forms:
\begin{enumerate}
\item For $j = 0$ and $n+1 \not\cmps p$ we have $\Slp \cong D^\la$.
\item For $j = 0$ and $n+1 \cmps p$ we have a composition series $0 \le M \le \Slp$, where $M \cong D^{(n)}$ and $\Slp/M \cong D^\la$.
\item For $j > 0$ and $n+1 \not\cmps {p+j}$ we have a composition series $0 \le M \le \Slp$, where $M \cong D^{(n-j,j)}$ and $\Slp/M \cong D^\la$.
\item For $j > 0$ and $n+1 \cmps {p+j}$ we have a composition series $0 \le M_1 \le M_2 \le \Slp$, where $M_1 \cong D^{(n-j,j)}$, $M_2/M_1 \cong D^{(n)}$ and $\Slp/M_2 \cong D^\la$.
\end{enumerate}
\end{Corollary}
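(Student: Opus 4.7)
The partition $\la=(n-p,p)$ is $p$-regular when $p\ge 3$: the only way a part could appear with multiplicity $\ge 2$ is $n-p=p$, in which case the multiplicity of $p$ is $2<p$. Hence $\Slp$ has a unique maximal submodule $\rad(\Slp)$ with $\Slp/\rad(\Slp)\cong D^\la$, and every other composition factor sits inside $\rad(\Slp)$. Reading off Corollary \ref{compfac 2p}, the only possible extra composition factors are $D^{(n-j,j)}$ (present iff $j\ge 1$) and $D^{(n)}$ (present iff $n+1\cmps p+j$). The four sub-cases of the statement correspond exactly to the four possible combinations of these two conditions, so the first task is just to enumerate them.

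Cases (a), (b), and (c) now fall out at once. In (a) the composition length is $1$, so $\Slp\cong D^\la$. In (b) and (c) the composition length is $2$; since $D^\la$ is forced to be the head, the unique remaining composition factor must occupy $\rad(\Slp)$, giving the asserted two-step series.

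The real content is case (d), where $\rad(\Slp)$ has length $2$ with factors $D^{(n-j,j)}$ and $D^{(n)}$, and I must show it is uniserial with $D^{(n-j,j)}$ at the bottom. There are three a~priori possibilities for $\rad(\Slp)$: the asserted uniserial module; the opposite uniserial module with $D^{(n)}$ as socle; or the semisimple module $D^{(n-j,j)}\oplus D^{(n)}$. To rule out the latter two, it suffices to show that $D^{(n)}$ is \emph{not} a submodule of $\Slp$, i.e. that
\[
\mathrm{Hom}_{\Sig n}\bigl(D^{(n)},\Slp\bigr)=(\Slp)^{\Sig n}=0,
\]
since once this holds the socle of $\Slp$ contains no copy of $D^{(n)}$, forcing $\mathrm{soc}(\Slp)=D^{(n-j,j)}$, and consequently $\rad(\Slp)/D^{(n-j,j)}\cong D^{(n)}$.

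Proving $(\Slp)^{\Sig n}=0$ is the main obstacle. My approach would be to use the embedding $\Slp\subseteq M^\la$: since $(M^\la)^{\Sig n}$ is one-dimensional, spanned by the sum $T:=\sum_t\{t\}$ of all $\la$-tabloids, it is enough to show $T\notin\Slp$. I would attempt this either by pairing with a suitable element of the dual basis of $M^\la$ under James's bilinear form (where polytabloids pair non-trivially only with specific tabloids), or by invoking Garnir relations to express $T$ modulo $\Slp$ as a non-zero element in $M^\la/\Slp$. The arithmetic of the $p$-adic expansion of $n+1$ enters through the coefficients produced by these Garnir relations, which explains why the conclusion depends on the congruence $n+1\cmps p+j$ rather than only on $n+1\cmp j$. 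If a direct computation becomes unwieldy, the fallback is to cite the detailed submodule lattice for two-part Specht modules in \cite{J}, Section~24, and extract the statement from there.
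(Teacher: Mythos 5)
Your reduction is exactly the paper's: $\la$ is $p$-regular so $D^\la$ is the head, cases (a)--(c) are immediate from Corollary \ref{compfac 2p}, and the only content of (d) is showing that $D^{(n)}$ does not embed in $\Slp$, so that the socle must be $D^{(n-j,j)}$. Where you diverge is in how you propose to prove $(\Slp)^{\Sig n}=0$: your primary plan is a direct computation inside $M^\la$ via the bilinear form or Garnir relations, with a citation of James, Section 24, held in reserve as a fallback. The paper simply takes your fallback as the main route: \cite[24.4 Theorem]{J} states that $\Smp$ has a trivial submodule if and only if $\mu_i \equiv -1 \ (\mod p^{z_i})$ for all $i$, and for $\la=(n-p,p)$ one has $z_1=2$, so in case (d) the check is the one-line congruence $\la_1 = n-p \cmps{j-1} \not\cmps{-1}$ (since $0<j\le p-1$). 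This is cleaner than re-deriving the criterion by hand --- your Garnir-relation computation would in effect be reproving a special case of James's theorem, and you would still need the $p$-adic arithmetic to land on the same congruence --- but nothing in your argument is wrong, and your observation that excluding $D^{(n)}$ from the socle forces the whole uniserial structure is exactly the step the paper leaves implicit.
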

\begin{proof}
Since $\la$ is $p$-regular, the top factor is always $D^\la$. Hence the cases from (a) to (c) follow immediately from Corollary \ref{compfac 2p}. The missing part of (d) follows from \cite[24.4 Theorem]{J}, which says that for any partition $\mu \vdash n$, the Specht module $\Smp$ has an $\Fp\Sig n$-submodule isomorphic to the trivial module $D^{(n)}$ if and only if for all $1 \le i \le l(\mu)$ we have $\mu_i \equiv -1 (\mod p^{z_i})$, where $z_i := \min\{r \in \Nn \mid p^r > \mu_{i+1}\}$.

Note that with $\mu_{l(\mu)+1} = 0$ we have $z_{l(\mu)} = 0$. Thus we have the trivial condition $\mu_{l(\mu)} \equiv -1$ $(\mod 1)$.

In (d), $\Slp$ has no submodule isomorphic to $D^{(n)}$, because $\la_1 = n-p \cmps {j-1} \not\cmps {-1}$. Hence $D^{(n)}$ is the second factor.
\end{proof}

Note that the previous two corollaries also hold for $p=2$ if $n>4$.

Further, we need the following statements about cohomology:

\begin{Proposition} \label{H12part} {\rm \cite[($**$), page 176]{BKM}.}
Let $\pad$ be the $p$-adic expansion of $n+1$, where $c_r \ne 0$. Then we have
$$
H^1(\Sig n,D^{(n-j,j)}) \left\{ \begin{array}{ll} \cong \Fp, & \mbox{if } j = c_i p^i \mbox{ for some }i < r \mbox{, where } c_i > 0, \\ =0 & \mbox{otherwise.} \end{array} \right.
$$
\end{Proposition}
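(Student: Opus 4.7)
The plan is to derive the statement by combining Shapiro's lemma for Young permutation modules with the James filtration of Specht modules and careful bookkeeping in long exact cohomology sequences, in the spirit of \cite{BKM}.

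First, I would consider the Young permutation module $M^{(n-j,j)}$ obtained by inducing the trivial module from $\Sig{n-j} \times \Sig j$ to $\Sig n$. By Shapiro's lemma,
$$
H^i(\Sig n, M^{(n-j,j)}) \cong H^i(\Sig{n-j} \times \Sig j, \Fp),
$$
and by the K\"unneth formula this is controlled by $H^i(\Sig k, \Fp)$ for $k \in \{n-j, j\}$. For $p$ odd and $i = 1$ we have $H^1(\Sig k, \Fp) = {\rm Hom}(\Sig k, \Fp) = 0$, since the abelianization of $\Sig k$ has order at most $2$. Hence $H^1(\Sig n, M^{(n-j,j)})$ vanishes, and analogous computations are available in the adjacent degrees.

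Next, Young's rule provides a Specht filtration of $M^{(n-j,j)}$ whose factors are $S^{(n-l,l)}_{\Fp}$ for $l = 0, 1, \ldots, j$. Running the long exact sequences attached to this filtration and using the vanishing from the previous step, I extract $H^1(\Sig n, S^{(n-l,l)}_{\Fp})$ by induction on $l$. The essential input is $H^0(\Sig n, S^{(n-l,l)}_{\Fp})$, which by \cite[24.4 Theorem]{J} is nonzero precisely when the parts of $(n-l,l)$ satisfy the James congruences modulo powers of $p$, a condition that can be read off directly from the digits $c_i$.

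Finally, \cite[24.15 Theorem]{J} gives the composition factors of $S^{(n-j,j)}_{\Fp}$ as the $D^{(n-k,k)}$ with $(j-k) \cp (n-2k+1)$, each with multiplicity one. Iterating long exact sequences up this composition series, starting from the top factor $D^{(n-j,j)}$ and substituting the $H^1$ values of the Specht modules just computed, isolates $H^1(\Sig n, D^{(n-j,j)})$.

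The main obstacle is the extension problem built into this strategy: long exact sequences yield only upper and lower bounds on $\dim H^1$, so one must decide whether each connecting map $H^0 \to H^1$ is surjective. Carrying this out requires matching the relation $\cp$ to the digits $c_i$ in the $p$-adic expansion of $n+1$, and verifying that exactly those $j$ of the form $c_i p^i$ with $i < r$ and $c_i > 0$ correspond to a Specht layer contributing a surviving one-dimensional piece. I expect this combinatorial bookkeeping, rather than any individual cohomological vanishing, to be the delicate step.
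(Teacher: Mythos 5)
The paper contains no proof of this proposition: it is imported verbatim from \cite[($**$), p.~176]{BKM} as an external input, so there is nothing internal to compare your argument against. Judged on its own, your proposal is a reasonable research plan rather than a proof, and the hole sits exactly where the content of the statement lies. You correctly get $H^1(\Sig n, M^{(n-j,j)}) = 0$ for odd $p$ from Shapiro and K\"unneth (indeed $H^2$ of the Young module vanishes too, since $H^i(\Sig k,\Fp)=0$ for $0<i<2p-3$), and invoking \cite[24.4 Theorem]{J} for the $H^0$ of Specht layers is the right combinatorial input --- it is the same theorem this paper leans on in Corollaries \ref{compfac 2p} and \ref{comp order}. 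But you then concede in your final paragraph that you have not determined which connecting homomorphisms $H^0 \to H^1$ are surjective, i.e.\ which layers of the filtration actually contribute a surviving copy of $\Fp$. That determination \emph{is} the proposition; everything before it yields only upper and lower bounds on $\dim H^1$.

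There is a second, unacknowledged gap in the last step. To pass from $H^1(\Sig n, S^{(n-j,j)}_{\Fp})$ to $H^1(\Sig n, D^{(n-j,j)})$ you use the sequence $0 \to \rad(S^{(n-j,j)}_{\Fp}) \to S^{(n-j,j)}_{\Fp} \to D^{(n-j,j)} \to 0$, whose long exact sequence reads in part
$$
H^1(\Sig n, S^{(n-j,j)}_{\Fp}) \lra H^1(\Sig n, D^{(n-j,j)}) \lra H^2(\Sig n, \rad(S^{(n-j,j)}_{\Fp})).
$$
The radical has composition factors $D^{(n-k,k)}$ with $k<j$, so isolating $H^1(\Sig n, D^{(n-j,j)})$ requires control of $H^2(\Sig n, D^{(n-k,k)})$ --- precisely the degree-two information recorded in Propositions \ref{H22part1} and \ref{H22part2} of this paper, again quoted from \cite{BKM}. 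Your outline never produces that input, and it is not a formal consequence of the degree-zero and degree-one bookkeeping you describe. Without it, the ``iteration up the composition series'' only bounds $H^1(\Sig n, D^{(n-j,j)})$ from above by a quantity you cannot yet evaluate. So the proposal is a sensible skeleton consistent in spirit with how \cite{BKM} and this paper manipulate such sequences, but as it stands it proves neither the vanishing nor the nonvanishing asserted for any particular $j$.
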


\begin{Proposition} \label{H22part1} {\rm \cite[Proposition 5.4]{BKM}.}
For $k<p$ we have $H^2(\Sig n,D^{(n-k,k)}) = 0$, except the cases
\begin{enumerate}
\item $p=3$, $n=3$, $k=1$;
\item $p=3$, $n=4$, $k=2$.
\end{enumerate}
In the exceptional cases $\dim H^2(\Sig n,D^{(n-k,k)}) = 1$ .
\end{Proposition}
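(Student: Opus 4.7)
The plan is to approach $H^2(\Sig n, D^{(n-k,k)})$ through the enveloping Specht module $S^{(n-k,k)}_{\Fp}$, whose unique irreducible quotient is $D^{(n-k,k)}$ (since $(n-k,k)$ is $p$-regular for $k<p$). By \cite[24.15 Theorem]{J}, the composition factors of $S^{(n-k,k)}_{\Fp}$ are exactly the $D^{(n-j,j)}$ with $j<k$ and $k-j\cp n-2j+1$, so one obtains a short filtration whose associated long exact cohomology sequence, combined with an induction on $k$, reduces the determination of $H^2(\Sig n, D^{(n-k,k)})$ to that of $H^2(\Sig n, S^{(n-k,k)}_{\Fp})$ together with boundary contributions out of $H^1$ of the lower composition factors, which are supplied by Proposition~\ref{H12part}.

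The first major reduction is block-theoretic. By Corollary~\ref{pcore coho}, $H^\ast(\Sig n, D^{(n-k,k)})$ vanishes whenever the $p$-core of $(n-k,k)$ has more than one row, since $D^{(n-k,k)}$ then lies outside the principal block. For $k<p$ this collapses almost all pairs $(n,k)$: only those for which some sequence of $p$-hook removals brings $(n-k,k)$ to a hook or a one-row partition survive, and these form a short list containing the candidate exceptional cases $(p,n,k)=(3,3,1)$ and $(3,4,2)$.

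On that surviving list, $H^2(\Sig n, S^{(n-k,k)}_{\Fp})$ is computed using two complementary tools. The Bockstein framework of Lemma~\ref{bockstein general} and Corollary~\ref{bockstein specht} links $d_2^{(n-k,k)}$ to the $p$-ranks $x_2^{(n-k,k)}, x_3^{(n-k,k)}$ of the integral cohomology, which in turn are controlled by Theorem~\ref{Cp structure}-style branching arguments. In parallel, Shapiro's lemma together with the Branching Theorem realizes $H^\ast(\Sig n, S^{(n-k,k)}_{\Fp})$ as cohomology of $\Sig{n-1}$ or $\Sig{n+1}$ on a Specht filtration whose factors are hooks or shorter two-part partitions, to which Propositions~\ref{H1hook} and~\ref{H2hook} apply directly. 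The base of the induction is the classical vanishing $H^2(\Sig n, \Fp) = 0$ for odd $p$.

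The hard part will be bookkeeping the boundary maps $H^1(\Sig n, D^{(n-j,j)}) \to H^2(\Sig n, D^{(n-k,k)})$ attached to the filtration of $S^{(n-k,k)}_{\Fp}$ closely enough to see that they are surjective away from the two exceptions but fail to kill a one-dimensional class at them. The exceptional cases should emerge as the unique small configurations where $p=3$ divides $n$ or $n+1$, the reduced-core condition is satisfied, and an $H^1$-class of a lower composition factor survives the boundary into $H^2$; their non-triviality can then be verified by an explicit GAP computation in the spirit of Section~\ref{zas}.
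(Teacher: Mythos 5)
The first thing to say is that the paper contains no proof of this statement: Proposition \ref{H22part1} is imported verbatim from \cite[Proposition 5.4]{BKM}, so there is no internal argument to compare yours against, and several of the results you invoke (Corollary \ref{2part cohos} in particular) are themselves derived \emph{from} this proposition later in the paper, so any route through them would be circular. Judged as a self-contained argument, your plan has two gaps that the stated toolkit cannot close.

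First, the homological reduction runs the wrong way. Writing $0 \to R \to S^{(n-k,k)}_{\Fp} \to D^{(n-k,k)} \to 0$ with $R$ the radical, the long exact sequence gives
$$
H^2(\Sig n, S^{(n-k,k)}_{\Fp}) \lra H^2(\Sig n, D^{(n-k,k)}) \lra H^3(\Sig n, R),
$$
so bounding $H^2(\Sig n, D^{(n-k,k)})$ from above requires \emph{third} cohomology of the lower composition factors, which none of Propositions \ref{H1hook}, \ref{H2hook}, \ref{H12part} nor the Bockstein machinery supplies; the boundary maps ``out of $H^1$ of the lower composition factors'' land in $H^2(\Sig n, R)$ and constrain $H^2$ of the Specht module, not of its head. (The workable variant uses self-duality of $D^{(n-k,k)}$ to realize it as the socle of the dual Specht module, replacing the $H^3$ term by an $H^1$ term --- which is essentially why \cite{BKM} works with dual Specht modules.) Second, the block-theoretic reduction does not yield a finite list: for each fixed $1 \le k < p$ the $p$-core of $(n-k,k)$ has at most one row precisely when $n \equiv k-1 \ (\mod p)$, i.e.\ for infinitely many $n$, so the strategy of disposing of the surviving cases by GAP cannot establish the vanishing assertion, and the claim that the two exceptions ``emerge'' from a short list is not an argument. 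Relatedly, Theorem \ref{Cp structure} only asserts the \emph{existence} of successors and predecessors in $\Cp 2$; it computes neither $x_2^\la$ nor $x_3^\la$, so the step that invokes it to ``control'' these ranks is vacuous. What remains of the proposal after removing these steps is the correct observation that the problem sits inside the principal block together with an unexecuted bookkeeping of connecting maps, which is precisely the content one would have to take from \cite{BKM}.
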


\begin{Proposition} \label{H22part2} {\rm \cite[Proposition 5.5]{BKM}.}
$$
H^2(\Sig n,D^{(n-p,p)}) \left\{ \begin{array}{ll} = 0 & \mbox{ if } n+1 \equiv 0 (\mod p), \\ \cong \Fp & \mbox{ otherwise.} \end{array} \right.
$$
\end{Proposition}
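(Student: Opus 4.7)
The plan is to reduce the computation of $H^2(\Sig n, D^{(n-p,p)})$ to data we already control: the composition series of the Specht module $\Slp$ with $\la = (n-p, p)$ (determined completely by Corollary~\ref{comp order}) and the $H^1$, $H^2$ of the other simple composition factors (controlled by Propositions~\ref{H12part}, \ref{H22part1} and $H^*(\Sig n, D^{(n)}) = H^*(\Sig n, \Fp)$). The key intermediate task is an independent computation of $H^*(\Sig n, \Slp)$ in degrees at most $2$.

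For that computation I would invoke the short exact sequence
$$0 \lra \Slp \lra M^\la_{\Fp} \lra M^{(n-p+1, p-1)}_{\Fp} \lra 0$$
of Young permutation modules (the two-part case of James' Kernel Intersection Theorem). By Shapiro's Lemma, $H^*(\Sig n, M^\mu_{\Fp}) \cong H^*(\Sig{\mu}, \Fp)$; and the Künneth formula together with $H^i(\Sig{p-1}, \Fp) = 0$ for $i \ge 1$ (since $p \nmid (p-1)!$) and $H^i(\Sig{p}, \Fp) = 0$ for $1 \le i \le 2p-4$ (from $H^*(\Sig{p}, \Fp) \cong H^*(\Z/p, \Fp)^{\Z/(p-1)}$) collapses the low-degree cohomology of both Young modules to that of $\Sig{n-p}$, respectively $\Sig{n-p+1}$, with trivial coefficients. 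The quotient map $M^\la \to M^{(n-p+1, p-1)}$ is the standard ``move a node from row two to row one,'' and its effect on $H^0 = \Fp$ is multiplication by $n - p + 1$; this vanishes precisely when $p \mid n+1$, which is already the dichotomy of the statement.

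Once $H^2(\Sig n, \Slp)$ is in hand, I would pass to $H^2(\Sig n, D^\la)$ through the filtration of $\Slp$ from Corollary~\ref{comp order}. If $p \mid n+1$, the Specht module has at most two composition factors ($D^\la$ on top, possibly $D^{(n)}$ below), so the long exact sequence together with known $H^*(\Sig n, \Fp)$ pins down $H^2(\Sig n, D^\la) = 0$. If $p \nmid n+1$, an additional factor $D^{(n-j,j)}$ with $j := (n+1) \bmod p \in \{1, \ldots, p-1\}$ appears, whose $H^1$ and $H^2$ are controlled by Propositions \ref{H12part} and \ref{H22part1} (the small $p=3$ exceptions of Proposition~\ref{H22part1} do not occur in the relevant range $n \ge 2p$), and the resulting cancellation should produce $H^2(\Sig n, D^\la) \cong \Fp$. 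The main obstacle is the bookkeeping around the connecting maps in positive degree of the Young-module long exact sequence --- identifying $H^i(\Sig{n-p}, \Fp) \lra H^i(\Sig{n-p+1}, \Fp)$ for $i = 1, 2$ --- and keeping track of which extensions in the composition series of $\Slp$ lead to vanishing or nonvanishing connecting homomorphisms. A viable alternative, probably tidier inductively, is to use the Branching Theorem from Section~\ref{branch} to restrict $D^{(n-p,p)}$ to $\Sig{n-1}$ and induct on $n$, with a few small base cases handled directly.
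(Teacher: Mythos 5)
First, a point of order: the paper gives no proof of this proposition at all --- it is imported verbatim from \cite[Proposition 5.5]{BKM} --- so there is no internal argument to compare yours against, and any proof you supply must stand on its own. Judged on those terms, your proposal has a genuine gap at its foundation. The short exact sequence $0 \to \Slp \to M^{(n-p,p)}_{\Fp} \to M^{(n-p+1,p-1)}_{\Fp} \to 0$ is false in exactly the case you need, namely second part equal to $p$ in characteristic $p$. The Kernel Intersection Theorem identifies $S^{(n-m,m)}$ with $\bigcap_{i=0}^{m-1}\ker\psi_{m,i}$, and the single map $\psi_{m,m-1}$ cuts out the Specht module only when the remaining $\psi_{m,i}$ factor through it with invertible coefficients; the relevant relation is $\psi_{m-1,i}\circ\psi_{m,m-1}=(m-i)\,\psi_{m,i}$, whose coefficient for $m=p$, $i=0$ is $p\equiv 0$. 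Concretely, for $p=3$, $n=6$: one computes $\psi_{2,0}\circ\psi_{3,2}=3\,\psi_{3,0}=0$ over $\F_3$, so the image of $\psi_{3,2}\colon M^{(3,3)}\to M^{(4,2)}$ lies in the codimension-one submodule $\ker\psi_{2,0}$; hence $\psi_{3,2}$ is not surjective and its kernel has dimension at least $20-14=6$, whereas $\dim S^{(3,3)}_{\F_3}=5$. So exactness fails both in the middle and on the right, and the computation of $H^i(\Sig n,\Slp)$ for $i\le 2$ that you build on this sequence collapses. (Your observation that the induced map on $H^0$ is multiplication by $n-p+1$ is correct and does isolate the dichotomy $p\mid n+1$, but it sits inside a wrong ambient sequence.)

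Two further cautions. Even with a corrected permutation-module resolution of $\Slp$ (for two-row partitions such resolutions exist but are longer, and identifying the maps they induce on $H^1$ and $H^2$ of the Young subgroups is precisely the hard part you defer as ``bookkeeping''), you must avoid circularity: in this paper the low-degree cohomology of $\Slp$ for $\la=(n-p,p)$ (Corollary \ref{2part cohos}) is \emph{deduced from} Propositions \ref{H12part}, \ref{H22part1} and \ref{H22part2}, so the filtration step of your plan may only use Corollary \ref{comp order} and the cohomology of the other composition factors, never Corollary \ref{2part cohos} itself. Your plan does respect this in principle --- an independent computation of $H^{\le 2}(\Sig n,\Slp)$ followed by the long exact sequences of the filtration is a legitimate strategy --- but the independent computation is the broken part. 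The alternative you mention at the end (restricting to $\Sig{n-1}$ and inducting) is a more promising direction, but as written it is a one-sentence intention rather than an argument, and the base cases and the behaviour of $D^{(n-p,p)}$ under restriction would carry all the weight.
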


If we put everything together, we obtain the following:

\begin{Corollary} \label{2part cohos}
Let $n \ge 2p$, $\la := (n-p,p)$ and $n+1 \cmp j$ for some $0 \le j \le p-1$.
\begin{enumerate}
\item If $j=0$, then $d^\la_2 = 0$ and $d^\la_0 = d^\la_1 = \left\{ \begin{array}{ll} 1 & \mbox{if } n+1 \cmps p, \\ 0, & \mbox{otherwise.} \end{array} \right.$
\item If $j>0$, then $d^\la_0 = 0$, $d^\la_1 = 1$ and $d^\la_2 \ge 1$.
\end{enumerate}
\end{Corollary}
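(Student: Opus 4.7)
The plan is to split the proof according to the four subcases of Corollary \ref{comp order} and, in each, to read off the cohomology of $\Slp$ from long exact sequences, using the values of $H^i(\Sig n, D^\mu)$ for the composition factors $D^\mu$ furnished by Propositions \ref{H1hook}--\ref{H22part2}. The hypothesis $n\ge 2p$ is strong enough to exclude every exceptional case occurring there.

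As a preliminary step I would tabulate the cohomology of the relevant irreducibles. For $D^{(n)}=h_{n,0}$, Propositions \ref{H1hook} and \ref{H2hook} give $H^0=\Fp$ and $H^1=H^2=0$. For $D^{(n-j,j)}$, with $0<j<p$ the residue from the hypothesis (so that $c_0=j$ in the $p$-adic expansion $n+1=\sum c_i p^i$), Proposition \ref{H12part} gives $H^1\cong\Fp$ (since $j=c_0 p^0$ and $r\ge 1$), Proposition \ref{H22part1} gives $H^2=0$, and $H^0=0$ as the module is nontrivial irreducible. For $D^\la=D^{(n-p,p)}$, Proposition \ref{H12part} yields $H^1\cong\Fp$ exactly when $c_1=1$ and $r\ge 2$; note that in subcases (b) and (d) of Corollary \ref{comp order} the inequality $n+1>p+j$ forces $n+1>p^2$, hence $r\ge 2$ is automatic. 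Proposition \ref{H22part2} gives $H^2(\Sig n,D^\la)=0$ iff $j=0$, and $H^0(\Sig n,D^\la)=0$.

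For part (a) (i.e.\ $j=0$): in subcase (a) of Corollary \ref{comp order}, $\Slp\cong D^\la$ and the three values are read off directly; in subcase (b), the long exact cohomology sequence of $0\to D^{(n)}\to\Slp\to D^\la\to 0$ degenerates since $H^1(\Sig n,D^{(n)})=H^2(\Sig n,D^{(n)})=H^2(\Sig n,D^\la)=0$ and $H^1(\Sig n,D^\la)\cong\Fp$, yielding $d^\la_0=d^\la_1=1$ and $d^\la_2=0$. For part (b) (i.e.\ $j>0$): in subcase (c) of Corollary \ref{comp order}, the sequence $0\to D^{(n-j,j)}\to\Slp\to D^\la\to 0$ combined with $H^1(\Sig n,D^\la)=0$ (because $c_1\ne 1$) immediately yields $d^\la_0=0$ and $d^\la_1=1$. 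The truly delicate case is subcase (d), where I would first compute the cohomology of the intermediate submodule $M_2$ with factors $D^{(n-j,j)}$ and $D^{(n)}$. The crucial input is that $\Slp$ has no trivial submodule (exactly the observation already used in the proof of Corollary \ref{comp order}(d)), so $H^0(\Sig n,\Slp)=0$; by left-exactness $H^0(\Sig n,M_2)=0$ as well, and the long exact sequence of $0\to D^{(n-j,j)}\to M_2\to D^{(n)}\to 0$ then forces the connecting map $H^0(\Sig n,D^{(n)})\to H^1(\Sig n,D^{(n-j,j)})$ to be an isomorphism $\Fp\cong\Fp$, whence $H^1(\Sig n,M_2)=0$. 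Moreover $H^2(\Sig n,M_2)=0$ since it is sandwiched in the long exact sequence between $H^2(\Sig n,D^{(n-j,j)})=0$ and $H^2(\Sig n,D^{(n)})=0$. Feeding these vanishings into the long exact sequence of $0\to M_2\to\Slp\to D^\la\to 0$ yields $H^1(\Sig n,\Slp)\cong H^1(\Sig n,D^\la)\cong\Fp$, so here too $d^\la_0=0$ and $d^\la_1=1$.

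Finally, for the bound $d^\la_2\ge 1$ in both subcases of $j>0$ I would invoke Lemma \ref{bockstein general} and Corollary \ref{bockstein specht}(a): since $\la\ne(n)$,
\[
d^\la_2 = x^\la_2 + x^\la_3 \ge x^\la_2 = d^\la_1 - d^\la_0 = 1.
\]
The main obstacle is subcase (d), whose three-step composition series forces the auxiliary computation of $H^\ast(\Sig n,M_2)$ via the socle argument above; once that is in hand, the remaining manipulations with long exact sequences are routine.
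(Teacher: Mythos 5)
Your proposal is correct and follows essentially the same route as the paper: the case split along Corollary \ref{comp order}, the long exact sequences fed with Propositions \ref{H1hook}--\ref{H22part2}, the auxiliary computation of $H^*(\Sig n,M_2)$ in the three-factor case (including $H^0(\Sig n,M_2)=0$ via the absence of a trivial submodule), and the final estimate $d^\la_2 \ge x^\la_2 = d^\la_1 - d^\la_0$ from Lemma \ref{bockstein general} and Corollary \ref{bockstein specht} all match. The only cosmetic difference is in part (a), where the paper deduces $d^\la_1 = d^\la_0$ from $d^\la_2=0$ via Lemmas \ref{bockstein general} and \ref{bockstein H0} instead of reading $d^\la_1$ off the long exact sequence and Proposition \ref{H12part} as you do; both are valid.
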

\begin{proof}
(a) By Corollary \ref{comp order}, the compositions series of $\Slp$ has at most two factors, and they have the form $D^{(n)}$ or $D^{(n-p,p)}$. Because of Propositions \ref{H22part1} and \ref{H22part2}, we have $H^2(\Sig n,D^{(n)}) = 0 = H^2(\Sig n,D^{(n-p,p)})$. Now the first statement follows.

From \cite[24.4 Theorem]{J} (see above) we get the isomorphism type of $\Hlp 0$. Finally, the statement for $\Hlp 1$ follows from Lemmas \ref{bockstein general} and \ref{bockstein H0}.

(b) If $j>0$, then $\Hlp 0 = 0$ by \cite[24.4 Theorem]{J}.

For first- and second-degree cohomology we have to consider two cases by Corollary \ref{comp order}:

First case: Let $n+1 \not\cmps p+j$ as in Corollary \ref{comp order} (c). In the $p$-adic expansion of $n+1$ we have $c_0 = j$, $c_1 \ne 1$ and $r \ge 1$, because $n > p$. Hence we obtain the exact cohomology sequence
$$
0 = H^0(\Sig n,D^\la) \lra H^1(\Sig n,D^{(n-j,j)})
$$
$$
\lra H^1(\Sig n,\Slp) \lra H^1(\Sig n,D^\la) = 0.
$$
This means that $H^1(\Sig n,\Slp) \cong H^1(\Sig n,D^{(n-j,j)}) \cong \Fp$.

Second case: Let $n+1 \cmps {p+j}$ as in Corollary \ref{comp order} (d). In the $p$-adic expansion of $n+1$ we have $c_0 = j$, $c_1 = 1$ and $r \ge 1$, because $n > p$. Hence we obtain the exact cohomology sequence
$$
0 = H^0(\Sig n,M_2) \lra H^0(\Sig n,D^{(n)}) \cong \Fp \stackrel{\alpha}{\lra}
$$
$$
H^1(\Sig n,D^{(n-j,j)}) \cong \Fp \stackrel{\beta}{\lra} H^1(\Sig n,M_2) \lra H^1(\Sig n,D^{(n)}) = 0.
$$
Since $\alpha$ is injective, it is also bijective, and because of $\im(\alpha) = \ker(\beta)$, $\beta$ is the zero map. Since $\beta$ is also surjective, we have $H^1(\Sig n,M_2) = 0$.

Further, we get from Proposition \ref{H22part1} the exact cohomology sequence
$$
0 = H^2(\Sig n,D^{(n-j,j)}) \lra H^2(\Sig n,M_2) \lra H^2(\Sig n,D^{(n)}) = 0.
$$
(Note that $n \ne 3,4$ if $p=3$). Thus we have the exact cohomology sequence
$$
0 = H^1(\Sig n,M_2) \lra H^1(\Sig n,\Slp)
$$
$$
\lra H^1(\Sig n,D^\la) \lra H^2(\Sig n,M_2) = 0.
$$
Hence we get from Proposition \ref{H12part} that $H^1(\Sig n,\Slp) \cong H^1(\Sig n,D^\la) \cong \Fp$.

In both cases, $\Hlp 2 \ne 0$ follows from Lemma \ref{bockstein general} and Corollary \ref{bockstein specht} (a).
\end{proof}

\section{The Zassenhaus algorithm} \label{zas}

The basic ideas in this section were originally developed by Hans Zassenhaus in order to determine space groups (cf. \cite{Z}). The Zassenhaus algorithm can be translated into terms of cohomology (cf. \cite[4.2 Algorithmic determination]{HP}).

Before going into this, we take a look at a similar method for determining $H^1(G,M)$, where $G$ is a finite group and $M$ is an $RG$-lattice for a commutative ring $R$ with identity. We need a finite presentation of $G$ and a matrix representation of $G$ on $M$. Now, in \cite[7.6 Cohomology]{EHO} there is a description of how to build up certain matrices $Z$ and $B$ with the properties
$$
Z^1(G,M) \cong \ker(Z),
$$
$$
B^1(G,M) \cong \col_R(B),
$$
$$
H^1(G,M) \cong \ker(Z)/\col_R(B),
$$
where $\col_R(B)$ denotes the $R$-module generated by the columns of $B$. Note that we represent $M$ as column vectors here, in contrast to \cite{EHO}. Matrices of the type of $Z$ will be called {\em Zassenhaus matrices} in the following.

Now we come back to our special situation of $\Sig n$. For $n=0,1$ there is nothing to do. Suppose $n \ge 2$ in the following.

The most common presentation of $\Sig n$ is the Coxeter presentation, which is based on the neighbour transpositions. But we will choose it only for $n=2$. For $n \ge 3$ we will use the presentation
$$
G_n := \langle a,b \mid a^2, b^n, (ab)^{n-1}, (ab^jab^{n-j})^2 \mbox{ for all } 2 \leq j \leq \textstyle\frac{n}{2} \rangle
$$
(cf. \cite[6.2]{CM}), which is more suitable for our purposes, as we will see later on. The generator $a$ corresponds to the transposition $(1,2)$, and $b$ corresponds to the $n$-cycle $(1, \ldots, n)$. The number of relations is $r := \lfloor \frac{n}{2} \rfloor +2$.

The following statements are formulated only for the case $n \ge 3$. They hold in an analogous way for $n=2$.

We obtain a matrix representation of $\Sig n$ from the GAP library {\ttfamily spechtmats.g}. It provides matrices $A, B \in GL_k(\Z)$ corresponding to the generators $a, b$, where $k = \rk(\SlR)$. (In the following we identify $\SlR$ with $R^k$ together with the corresponding matrix operation.)

The Zassenhaus matrix for $R = \Z$ thus obtained is denoted by $Z_\la \in \Z^{rk \times 2k}$, and the matrix $B$ is denoted by $B_\la$ here. It has the form $B_\la = \left(\begin{array}{c} A-1 \\ B-1 \end{array}\right) \in \Z^{2k \times k}$.

In the case of $R = \Q$ we work with the same matrices. Since $H^1(\Sig n,\Q^k) = 0$ by \cite[Corollary 6.5.9]{We}, we have $\ker_\Q(Z_\la) = \col_\Q(B_\la)$ and hence the following:

\begin{Corollary} \label{ranks}
$\rk(Z_\la) + \rk(B_\la) = 2k$.
\end{Corollary}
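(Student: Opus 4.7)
The plan is short: the main work has already been done in the paragraph preceding the corollary, where it is established that $\ker_{\Q}(Z_\la) = \col_{\Q}(B_\la)$ as subspaces of $\Q^{2k}$. From this identity the corollary will fall out by rank-nullity, once one observes that the rank of an integer matrix is unchanged upon extending scalars to $\Q$, so that $\rk(Z_\la)$ and $\rk(B_\la)$ can be computed over either $\Z$ or $\Q$.

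Concretely, I would argue as follows. View $Z_\la$ as a $\Q$-linear map $\Q^{2k} \to \Q^{rk}$. The rank-nullity theorem gives
$$
\dim_{\Q} \ker_{\Q}(Z_\la) \;=\; 2k - \rk(Z_\la).
$$
On the other hand, $\col_{\Q}(B_\la)$ is, by definition, the image of $B_\la$ regarded as a $\Q$-linear map $\Q^k \to \Q^{2k}$, so
$$
\dim_{\Q} \col_{\Q}(B_\la) \;=\; \rk(B_\la).
$$
Combining these two equalities with $\ker_{\Q}(Z_\la) = \col_{\Q}(B_\la)$ yields $2k - \rk(Z_\la) = \rk(B_\la)$, which rearranges to the claim.

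There is essentially no obstacle: the only thing one has to be comfortable with is that the rank of a matrix with entries in $\Z$, computed as the dimension of its column space, does not change when one tensors up to $\Q$, and that the isomorphisms $Z^1(\Sig n, \Q^k) \cong \ker_{\Q}(Z_\la)$ and $B^1(\Sig n, \Q^k) \cong \col_{\Q}(B_\la)$ of the cited construction in \cite{EHO} are valid over any commutative ring with identity, in particular over $\Q$, so that the vanishing $H^1(\Sig n, \Q^k) = 0$ from \cite[Corollary 6.5.9]{We} can be imported directly. Everything else is linear algebra.
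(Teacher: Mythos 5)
Your argument is correct and is exactly the one the paper intends: the corollary is stated as an immediate consequence of $\ker_\Q(Z_\la) = \col_\Q(B_\la)$, which follows from $H^1(\Sig n,\Q^k)=0$, and your rank--nullity computation together with the observation that the rank of an integer matrix is unchanged under extension of scalars to $\Q$ supplies precisely the omitted linear algebra. Nothing further is needed.
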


But we can determine $H^1(\Sig n,\Slz)$ without knowing $\ker(Z_\la)$ explicitly: For $\la = (n)$ we have $H^1(\Sig n,\Slz) = 0$ by Lemma \ref{bockstein H0}. Now let $\la \ne (n)$. Because $S^\la_\Q$ is irreducible and because of \cite[Corollary 6.5.9]{We}, we have the exact cohomology sequence
$$
0 = H^0(\Sig n,\Q^k) \lra H^0(\Sig n,\Q^k/\Z^k) \lra H^1(\Sig n,\Z^k) \lra H^1(\Sig n,\Q^k) = 0.
$$
This means that
$$
\Hlz{1} \cong H^0(\Sig n,S^\la_\Q/\Slz) = \{ \bar{v} \in S^\la_\Q/\Slz \mid g\bar{v} = \bar{v} \mbox { for all } g \in \Sig n \}.
$$
Then $v + \Z^k \in H^0(\Sig n,\Q^k/\Z^k)$ if and only if $B_\la v \in \Z^{kl}$.

Now let the elementary divisors of $B_\la$ be given by $\hat{e}_1, \ldots, \hat{e}_q$. Then we have
$$
H^0(\Sig n,\Q^k/\Z^k) \cong \bigoplus_{i=1}^q\Z/\hat{e}_i\Z \oplus \bigoplus_{i=q+1}^k\Q/\Z.
$$
But $H^1(\Sig n,\Slz)$ is a finite abelian group by \cite[Corollary 6.5.10]{We}. Hence the following holds:

\begin{Theorem} \label{H^1(Sn)}
Let $\la \ne (n)$ and $\hat{e}_1, \ldots, \hat{e}_q$ be the elementary divisors of $B_\la$. Then we have $q = k$ and
$$
\Hlz{1} \cong \bigoplus_{i=1}^k\Z/\hat{e}_i\Z.
$$
\end{Theorem}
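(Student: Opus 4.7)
The plan is to leverage the Smith-normal-form computation that immediately precedes the statement. That discussion has already identified
$$
\Hlz 1 \;\cong\; H^0(\Sig n, \Q^k/\Z^k) \;\cong\; \bigoplus_{i=1}^q \Z/\hat e_i \Z \;\oplus\; \bigoplus_{i=q+1}^k \Q/\Z,
$$
so the only remaining content of the theorem is the equality $q = k$; once that is known the $\Q/\Z$ summands vanish and the claimed decomposition is immediate.

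To establish $q = k$ I would give a rank argument. Since $q$ equals the number of nonzero elementary divisors of $B_\la$, one has $q = \rk_\Q(B_\la)$, so it suffices to show $\ker_\Q(B_\la) = 0$. A vector $v \in \Q^k$ lies in $\ker(B_\la)$ iff $(A-I)v = 0 = (B-I)v$, iff $v$ is fixed by both generators of $\Sig n$, i.e.\ $v \in H^0(\Sig n, \Slq)$. Since $\Slq$ is irreducible by \cite[2.1.11 Theorem]{JK} and is nontrivial because $\la \ne (n)$, this fixed-point space is zero, so $\ker_\Q(B_\la) = 0$ and $q = k$ as required.

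An alternative and even shorter route is to invoke \cite[Corollary 6.5.10]{We}, as the paper already does just before the theorem: $\Hlz 1$ is finite, whereas $(\Q/\Z)^{k-q}$ is infinite unless $k = q$, so $q = k$ on purely abstract grounds. Either way there is no serious obstacle — the theorem is essentially a repackaging of the preceding calculation, pointing out that the elementary divisors of $B_\la$ are exactly the invariant factors of $\Hlz 1$ and can therefore be read off by a single Smith-normal-form computation, which is precisely what makes the result algorithmically useful.
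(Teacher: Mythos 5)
Your proposal is correct, and your ``alternative and even shorter route'' is precisely the paper's own proof: the author derives $\Hlz{1} \cong \bigoplus_{i=1}^q\Z/\hat{e}_i\Z \oplus \bigoplus_{i=q+1}^k\Q/\Z$ from the long exact sequence and then quotes \cite[Corollary 6.5.10]{We} to conclude that the finiteness of $\Hlz{1}$ forces $q=k$. Your primary route --- observing that $\ker_\Q(B_\la) = H^0(\Sig n,\Slq) = 0$ because $\Slq$ is irreducible and nontrivial for $\la \ne (n)$, so that $\rk_\Q(B_\la)=k$ and hence all $k$ elementary divisors are nonzero --- is a genuinely different and equally valid argument. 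It is in fact more self-contained: it avoids the appeal to the general finiteness theorem and instead proves directly the rank statement that the paper only extracts afterwards as Corollary \ref{zas rk}(a) (there deduced \emph{from} Theorem \ref{H^1(Sn)} together with Corollary \ref{ranks}). So your first argument reverses the paper's logical order but loses nothing; the paper's argument has the minor advantage of working uniformly from the abstract finiteness of higher integral cohomology, which is the same principle it reuses for $\Hlz{2}$ in Theorem \ref{H^2(Sn)}.
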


\begin{Corollary} \label{zas rk}
From Theorem \ref{H^1(Sn)} and Corollary \ref{ranks} we obtain
\begin{enumerate}
\item $B_{(n)} = 0$ and $\rk(B_\la) = k$ for $\la \ne (n)$,
\item $\rk(Z_{(n)}) = 2k$ and $\rk(Z_\la) = k$ for $\la \ne (n)$.
\end{enumerate}
\end{Corollary}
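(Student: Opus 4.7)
The plan is essentially to unpack the two cited results and handle the two cases $\la = (n)$ and $\la \ne (n)$ separately; the argument is short because both parts are designed to follow directly.

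First I would dispose of the case $\la = (n)$. Here $\Slz = \Z$ is the trivial $\Sig n$-module of rank $k = 1$, so the matrices $A, B \in GL_1(\Z)$ representing the generators $a, b$ are both the identity $(1)$. Consequently $B_{(n)} = \left(\begin{array}{c} A-1 \\ B-1 \end{array}\right) = 0$, establishing the first assertion of (a). From Corollary \ref{ranks}, $\rk(Z_{(n)}) = 2k - \rk(B_{(n)}) = 2k$, which is the first assertion of (b).

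For $\la \ne (n)$ I would invoke Theorem \ref{H^1(Sn)} directly: it states that $B_\la$ has exactly $q = k$ elementary divisors $\hat e_1, \dots, \hat e_k$. Since the number of nonzero elementary divisors of an integer matrix equals its rank over $\Q$ (equivalently, over $\Z$), this yields $\rk(B_\la) = k$, completing (a). Then Corollary \ref{ranks} gives $\rk(Z_\la) = 2k - k = k$, completing (b).

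There is no real obstacle, since both parts are verbatim consequences of the two results cited in the statement; the only point requiring a sentence of justification is that for $\la = (n)$ the action of $\Sig n$ on $\Slz$ is trivial, so the block matrix $B_{(n)}$ built from $A-1$ and $B-1$ vanishes identically. Everything else is a straightforward application of the rank-nullity relation packaged in Corollary \ref{ranks}.
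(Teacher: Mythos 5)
Your proof is correct and follows exactly the route the paper intends (the paper leaves the argument implicit in the phrase ``From Theorem \ref{H^1(Sn)} and Corollary \ref{ranks} we obtain''): the trivial action on $S^{(n)}_\Z$ forces $B_{(n)}=0$, Theorem \ref{H^1(Sn)} gives $q=k$ nonzero elementary divisors and hence $\rk(B_\la)=k$ for $\la\ne(n)$, and Corollary \ref{ranks} converts these into the ranks of $Z_\la$. No gaps.
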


The actual Zassenhaus algorithm is concerned with something slightly different: Now we consider $\Hlz{2}$, which is isomorphic to $H^1(\Sig n,\Q^k/\Z^k)$. With a similar argument as above (cf. \cite[p. 128 ff]{Z}) we obtain:

\begin{Theorem} \label{H^2(Sn)}
Let $e_1,...,e_r$ be the elementary divisors of $Z_\la$. Then $\Hlz{2}$ is a finite abelian group with
$$
\Hlz{2} \cong \bigoplus_{i=1}^r\Z/e_i\Z.
$$
\end{Theorem}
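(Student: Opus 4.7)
The approach I would take is to reduce $\Hlz 2$ to an $H^1$ with divisible coefficients and then repeat, one level up, the derivation already carried out for Theorem \ref{H^1(Sn)}. Since $\Sig n$ is finite and $\Q^k$ is a $\Q\Sig n$-module, we have $H^i(\Sig n,\Q^k)=0$ for every $i\ge 1$ by \cite[Corollary 6.5.9]{We}. Applying the long exact cohomology sequence to
$$
0 \lra \Z^k \lra \Q^k \lra \Q^k/\Z^k \lra 0
$$
therefore produces a Bockstein isomorphism $\Hlz 2 \cong H^1(\Sig n,\Q^k/\Z^k)$, which is the dimension shift I plan to exploit.

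Next I would compute $H^1(\Sig n,\Q^k/\Z^k)$ from the presentation $G_n$. The entire bookkeeping that yielded $Z^1(\Sig n,\SlR)\cong \ker_R(Z_\la)$ and $B^1(\Sig n,\SlR)\cong \col_R(B_\la)$ in Section \ref{zas} only uses the cocycle and coboundary identities written out on the two generators $a, b$ and the defining relations, so it applies verbatim with the $\Z$-module $\Q/\Z$ in place of $R$. Consequently
$$
\Hlz 2 \cong \ker_{\Q/\Z}(Z_\la)\big/\im_{\Q/\Z}(B_\la),
$$
where $Z_\la$ and $B_\la$ act on $(\Q/\Z)^{2k}$ and $(\Q/\Z)^k$ by reduction, and the containment $\im_{\Q/\Z}(B_\la) \subseteq \ker_{\Q/\Z}(Z_\la)$ is automatic since $Z_\la B_\la = 0$ already over $\Z$.

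To finish, I would bring $Z_\la$ into Smith normal form $U Z_\la V = D$ with $U, V$ unimodular and $D$ diagonal carrying the nonzero elementary divisors $e_1,\ldots,e_r$. Because $U, V\in GL_\bullet(\Z)$ induce automorphisms of $(\Q/\Z)^\bullet$, one gets
$$
\ker_{\Q/\Z}(Z_\la) \cong \bigoplus_{i=1}^{r}\Z/e_i\Z \oplus (\Q/\Z)^{2k-r}.
$$
By Corollary \ref{zas rk} the divisible summand has rank $2k-r = k$ for $\la\ne(n)$ and $0$ for $\la=(n)$, matching $\rk B_\la$ in each case. The image $\im_{\Q/\Z}(B_\la)$ is divisible, being a quotient of the divisible group $(\Q/\Z)^k$, and the total quotient $\Hlz 2$ is finite by \cite[Corollary 6.5.10]{We}; hence the image must exhaust the maximal divisible summand of the kernel. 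Factoring it out leaves exactly $\bigoplus_{i=1}^{r}\Z/e_i\Z$.

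The main technical hurdle — the only step that is not purely mechanical after the Bockstein shift — is this final identification of $\im_{\Q/\Z}(B_\la)$ with the maximal divisible subgroup of $\ker_{\Q/\Z}(Z_\la)$ rather than just a proper divisible subgroup. Finiteness of $\Hlz 2$ is what forces the equality; an alternative route, avoiding the appeal to \cite{We}, would be a parallel Smith-form analysis of $B_\la$ showing directly that its $\Q/\Z$-image complements the torsion part of $\ker_{\Q/\Z}(Z_\la)$.
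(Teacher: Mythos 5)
Your proposal is correct and follows essentially the same route the paper takes (and merely sketches by reference to Zassenhaus): the dimension shift $\Hlz 2 \cong H^1(\Sig n,\Q^k/\Z^k)$ via vanishing of rational cohomology, the cocycle/coboundary description through the integer matrices $Z_\la$ and $B_\la$ applied to $\Q/\Z$-coefficients, and Smith normal form plus finiteness of $\Hlz 2$ to identify $\im_{\Q/\Z}(B_\la)$ with the maximal divisible subgroup of $\ker_{\Q/\Z}(Z_\la)$. Your write-up in fact supplies the details the paper delegates to \cite[p.~128 ff]{Z}.
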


Thus we compute the isomorphism type of $\Hlz 2$ by building up a Zassenhaus matrix and determining its elementary divisors. The problem is that the computation of elementary divisors of big matrices is a non-trivial problem. The size of a Zassenhaus matrix depends on the rank of the module and on the numbers of generators and relations in the chosen presentation. (This is the reason why we chose $G_n$.) And the ranks of Specht modules become very big very soon. Here is an example, just to give an impression of the dimensions we have to deal with: To determine that $(5,2^2,1^3) \in \C_3^2$, we have to evaluate a $29568 \times 7392$ Zassenhaus matrix.

The standard GAP function {\ttfamily ElementaryDivisorsMat} (cf. \cite{GAP}) is not strong enough. With the help of the function {\ttfamily ElementaryDivisorsPPartRk} from the GAP-package EDIM (cf. \cite{edim}), which uses a $p$-adic method, one achieves better results. But even then the end of the rope is reached quite soon. All in all, we have computed the elementary divisors for all $\la \vdash n \le 11$ and for those $\la \vdash n \le 20$ for which the corresponding Zassenhaus matrices are not too big. For $n = 20$ only 14 of the total 627 partitions are processed. A table with all computed results can be found in Appendix \ref{tables}.

But there are also a few examples we can do by hand:

\begin{Lemma} \label{some Z 1cohos}
\begin{enumerate}
\item For $n \ge 2$, $H^1(\Sig n,S^{(1^n)}_\Z)$ is cyclic of order $2$.
\item For $n \ge 2$, $H^1(\Sig n,S^{(n-1,1)}_\Z)$ is cyclic of order $n$.
\end{enumerate}
\end{Lemma}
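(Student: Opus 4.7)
My approach for (a) is to apply Theorem~\ref{H^1(Sn)} directly. Since $\la=(1^n)$, the Specht module $S^{(1^n)}_\Z$ is the sign representation, of rank $k=1$. The generator $a=(1,2)$ acts on it as $-1$ and the generator $b=(1,2,\ldots,n)$ as $(-1)^{n-1}$, so the matrix $B_\la$ from Section~\ref{zas} is the $2\times 1$ column
$$B_\la = \left(\begin{array}{c} -2 \\ (-1)^{n-1}-1 \end{array}\right)$$
(in the Coxeter case $n=2$ only the first entry survives). In either parity of $n$ the greatest common divisor of the entries equals $2$, hence $B_\la$ has a single elementary divisor equal to $2$, and Theorem~\ref{H^1(Sn)} yields $H^1(\Sig n,S^{(1^n)}_\Z)\cong\Z/2\Z$.

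For (b) the rank of $S^{(n-1,1)}_\Z$ grows with $n$, so rather than chase a Zassenhaus matrix I would exploit the classical realisation of $S^{(n-1,1)}_\Z$ as the augmentation kernel. Concretely, one has a short exact sequence of $\Z\Sig n$-lattices
$$0 \lra S^{(n-1,1)}_\Z \lra \Z^n \lra \Z \lra 0,$$
where $\Z^n$ is the natural permutation lattice and the surjection sends $(x_1,\ldots,x_n)\mapsto\sum_i x_i$. Since $\Z^n$ is induced from the trivial $\Z\Sig{n-1}$-module, Shapiro's lemma gives $H^i(\Sig n,\Z^n)\cong H^i(\Sig{n-1},\Z)$, which equals $\Z$ for $i=0$ (spanned by $(1,\ldots,1)$) and vanishes for $i=1$ because $\mathrm{Hom}(\Sig{n-1},\Z)=0$. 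Together with $H^0(\Sig n,S^{(n-1,1)}_\Z)=0$ from Lemma~\ref{bockstein H0}, the long exact cohomology sequence collapses to
$$0 \lra \Z \slra{\cdot n} \Z \lra H^1(\Sig n,S^{(n-1,1)}_\Z) \lra 0,$$
since the induced map on $H^0$ sends $(a,\ldots,a)\mapsto na$. This identifies the desired cohomology group with $\Z/n\Z$.

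The only mild obstacle I anticipate is the identification of $S^{(n-1,1)}_\Z$ with the augmentation kernel: although well known for two-row Specht modules, I would double-check it via James's polytabloid construction in \cite{J}, verifying that a $\Z$-basis of polytabloids lands as a full-rank sublattice of $\{x\in\Z^n:\sum_i x_i=0\}$ with trivial Smith normal form. No other step looks delicate: both parts rely only on tools already developed in the paper (Theorem~\ref{H^1(Sn)}, Shapiro's lemma, and Lemma~\ref{bockstein H0}), and the case $n=2$ is handled uniformly because for $\la=(1^n)$ the argument in (a) collapses to $B_\la=A-1=-2$, while for $\la=(n-1,1)=(1,1)$ the sequence argument reduces to the same $\Z/2\Z$ as in (a).
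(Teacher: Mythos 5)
Both parts of your argument are correct. Part (a) is exactly the paper's method: the paper proves this lemma by ``writing down the matrices $B_\la$ \ldots and determining their elementary divisors explicitly'' (deferring details to the diploma thesis), and for $\la=(1^n)$ that is precisely your $2\times 1$ column with gcd $2$, fed into Theorem \ref{H^1(Sn)}. Part (b) is where you genuinely diverge: the paper would compute the elementary divisors of the $2(n-1)\times(n-1)$ matrix $B_{(n-1,1)}$ for the standard representation, whereas you bypass all matrix work via the augmentation sequence $0 \to S^{(n-1,1)}_\Z \to \Z^n \to \Z \to 0$, Shapiro's lemma for the permutation lattice, and the observation that the induced map on $H^0$ is multiplication by $n=[\Sig n:\Sig{n-1}]$. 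Your route is cleaner and more conceptual -- cyclicity and the value $n$ both fall out of the connecting map rather than from a Smith normal form -- at the cost of needing the identification of $S^{(n-1,1)}_\Z$ with the full augmentation kernel. That identification is not really an ``obstacle'': for $\la=(n-1,1)$ the polytabloids are exactly the differences $\{i\}-\{j\}$ of tabloids, and these generate the kernel of $(x_1,\dots,x_n)\mapsto\sum_i x_i$ over $\Z$ (not merely a finite-index sublattice), so your sequence is exact as stated. The paper's approach buys uniformity (the same recipe handles Lemma \ref{some Z 2cohos} with $Z_\la$ in place of $B_\la$, where no such convenient resolution is available), while yours buys a proof one can verify without any computation.
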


\begin{Lemma} \label{some Z 2cohos}
\begin{enumerate}
\item For $n \ge 2$, $H^2(\Sig n,S^{(n)}_\Z)$ is cyclic of order $2$.
\item For $n \ge 2$, $H^2(\Sig n,S^{(1^n)}_\Z)$ is trivial except when $n \in \{3,4\}$. In the exceptional cases, $H^2(\Sig n,S^{(1^n)}_\Z)$ is cyclic of order $3$.
\item For $n \ge 3$, $H^2(\Sig n,S^{(n-1,1)}_\Z)$ is cyclic of order $2$ if $n$ is even, and trivial if $n$ is odd.
\end{enumerate}
\end{Lemma}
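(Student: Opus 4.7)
The plan is to attack all three parts uniformly by wrapping the relevant Specht module in a short exact sequence of $\Z\Sig n$-modules whose middle term is an induced trivial module, so that Shapiro's Lemma reduces the computation to $H^\ast$ of a subgroup with trivial $\Z$-coefficients. The key external ingredient I would use throughout is the classical fact $H^2(\Sig n,\Z) \cong \Z/2\Z$ for $n\ge 2$, which follows from the universal coefficient theorem via $\Sig n^{\mathrm{ab}} = \Z/2\Z$ together with the fact that $H_2(\Sig n,\Z)$ is torsion. This is exactly the content of (a), since $S^{(n)}_\Z$ is the trivial $\Z\Sig n$-module.

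For (b), I would identify $S^{(1^n)}_\Z$ with the sign module $\Z_{\mathrm{sgn}}$, which is the augmentation kernel of $\mathrm{Ind}_{A_n}^{\Sig n}\Z = \Z[\Sig n / A_n]$:
$$
0 \lra \Z_{\mathrm{sgn}} \lra \mathrm{Ind}_{A_n}^{\Sig n}\Z \lra \Z \lra 0.
$$
Shapiro's Lemma identifies $H^i(\Sig n,\mathrm{Ind}_{A_n}^{\Sig n}\Z)$ with $H^i(A_n,\Z)$, and under this identification the map induced by augmentation is corestriction. Using $H^1(A_n,\Z) = 0$ and $H^1(\Sig n,\Z) = 0$, the long exact sequence collapses to
$$
H^2(\Sig n,\Z_{\mathrm{sgn}}) \cong \ker\bigl(\mathrm{cor}\colon H^2(A_n,\Z) \to H^2(\Sig n,\Z)\bigr).
$$
Universal coefficients give $H^2(A_n,\Z) \cong A_n^{\mathrm{ab}}$, which equals $\Z/3\Z$ for $n \in \{3,4\}$ and is trivial otherwise ($A_n$ is trivial for $n \le 2$ and perfect for $n \ge 5$). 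Since every homomorphism $\Z/3\Z \to \Z/2\Z$ is zero, the kernel is the entire source, and (b) follows.

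Part (c) runs in parallel, using
$$
0 \lra S^{(n-1,1)}_\Z \lra \mathrm{Ind}_{\Sig{n-1}}^{\Sig n}\Z \lra \Z \lra 0,
$$
since $S^{(n-1,1)}_\Z$ is the sum-zero submodule of $M^{(n-1,1)} = \Z^n$. The same formal steps yield
$$
H^2(\Sig n,S^{(n-1,1)}_\Z) \cong \ker\bigl(\mathrm{cor}\colon H^2(\Sig{n-1},\Z) \to H^2(\Sig n,\Z)\bigr),
$$
with both sides $\Z/2\Z$ for $n \ge 3$. To decide this kernel I would apply the transfer identity $\mathrm{cor}\circ\mathrm{res} = [\Sig n:\Sig{n-1}]\cdot\mathrm{id} = n\cdot\mathrm{id}$. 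For $n$ odd this is the identity on $\Z/2\Z$, forcing $\mathrm{cor}$ to be an isomorphism and the kernel to be $0$. For $n$ even it is zero, and to conclude $\mathrm{cor} = 0$ I would observe that the generator of $H^2(\Sig n,\Z)$ is the pullback of $0 \to \Z \to \Z \to \Z/2\Z \to 0$ along the sign map, and that this class restricts to the corresponding sign class of $\Sig{n-1}$, again the generator of $H^2(\Sig{n-1},\Z)$. Hence $\mathrm{res}$ is an isomorphism, so $\mathrm{cor}$ vanishes and the kernel is all of $\Z/2\Z$.

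The only delicate bookkeeping points are the identification of the Shapiro-transported augmentation map with corestriction and, in the even case of (c), exhibiting an explicit representative of the generator of $H^2(\Sig n,\Z)$ whose restriction to $\Sig{n-1}$ is visibly nonzero; both are routine once the sign class is explicitly named.
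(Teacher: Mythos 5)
Your proof is correct, but it takes a genuinely different route from the paper. The paper disposes of this lemma (together with Lemma \ref{some Z 1cohos}) in one line: for these three families the Specht modules have small rank, so one writes down the Zassenhaus matrix $Z_\la$ explicitly and reads off its elementary divisors via Theorem \ref{H^2(Sn)}, deferring the bookkeeping to \cite{web}. You instead embed each module in a short exact sequence with middle term $\mathrm{Ind}_H^{\Sig n}\Z$ for $H = A_n$ or $\Sig{n-1}$, apply Shapiro's Lemma, and identify the relevant map as corestriction; the answer then drops out of classical inputs --- $\Sig n^{\mathrm{ab}} \cong \Z/2\Z$, $A_n^{\mathrm{ab}}$, finiteness of the Schur multipliers, $\mathrm{cor}\circ\mathrm{res} = n\cdot\mathrm{id}$, and the observation that the sign class of $\Sig n$ restricts to the sign class of $\Sig{n-1}$. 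I checked the individual steps: the identification of $S^{(1^n)}_\Z$ and $S^{(n-1,1)}_\Z$ as augmentation kernels is right, the Shapiro/corestriction dictionary is standard, and the parity analysis in (c) is sound (for $n$ even you correctly note that $\mathrm{cor}\circ\mathrm{res}=0$ alone does not kill $\mathrm{cor}$, and you supply the needed surjectivity of $\mathrm{res}$). Your argument is conceptually cleaner and explains \emph{why} the answers are $2$- and $3$-torsion of the observed shape, at the cost of importing external facts about $H_2$ and abelianizations; the paper's matrix computation is self-contained in its own framework and is the method that actually produced the tables, but yields no structural insight. Note also that your transfer argument in (c) is essentially the same mechanism the paper itself uses in Section \ref{branch} to prove Theorem \ref{Cp structure}(b), so your proof fits naturally into the paper's toolkit even though the paper does not use it here.
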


Both of the previous Lemmas can be proved by writing down the matrices $B_\la$ or $Z_\la$ respectively and determining their elementary divisors explicitly. For details the reader is referred to \cite{web}.

\section{Paths in Specht cohomology graphs} \label{path}

At the end of the introduction we formulated a plan to find paths in $\Cp 2$. This section is concerned with the first steps in that direction. In the following let $p > 2$.

\begin{Lemma} \label{(n-2,1^2) in Cp2}
Let $n \ge 3$ and $\la := (n-2,1^2)$. Then $\la \in \Cp{2}$ if and only if $p \Md n$. In this case, $x_2^\la = 1$. There is no predecessor of $\la$ in $\Cp 2$. If $n=p$, then $\la$ is the only partition of $n$ in $\Cp{2}$.
\end{Lemma}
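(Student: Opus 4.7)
The plan is to handle the four claims in turn using the hook calculations of Section~\ref{comp} together with the Bockstein bookkeeping of Section~\ref{bock}. For the equivalence $\la\in\Cp 2\Leftrightarrow p\mid n$ with $x_2^\la=1$, I would begin by noting that when $p\nmid n$ the hook Specht module $\Slp\cong h_{n,2}$ is irreducible; none of the exceptions in Proposition~\ref{H2hook} for $j=2$ applies when $p\nmid n$, so $d_2^\la=0$ and Corollary~\ref{bockstein cor}(a) gives $p\nmid|\Hlz 2|$. When $p\mid n$ with $(n,p)\ne(3,3)$, Corollary~\ref{hook coho 2} yields $d_2^\la=1$, while Corollaries~\ref{hook coho 0} and~\ref{hook coho 1} give $d_0^\la=0$ and $d_1^\la=1$; plugging into Corollary~\ref{bockstein specht}(a) produces $x_2^\la=d_1^\la-d_0^\la=1$ on the nose. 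The edge case $n=p=3$ reduces to $\la=(1^3)$ and is dispatched by Lemma~\ref{some Z 2cohos}(b), which delivers $\Hlz 2\cong\Z/3\Z$ directly.

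For the no-predecessor statement I would enumerate $\la-=\{(n-2,1),(n-3,1^2)\}$ for $n\ge 4$ (the second collapsing to $(1^3)$ when $n=4$) and $\la-=\{(1,1)\}$ for $n=3$. Under the standing assumption $p\mid n$, $p$ is odd so $p\nmid n-1$, and each $\mu\in\la-$ is then a hook with $p\nmid|\mu|$, making $\Smp$ irreducible. A direct check of Proposition~\ref{H2hook} shows that for $j\in\{1,2\}$ none of the exceptions applies when $p\nmid|\mu|$ (in particular the case $(p,|\mu|,j)=(3,3,1)$ would require $3\mid 4$), so $d_2^\mu=0$ and $\mu\notin\Cp 2$ by Corollary~\ref{bockstein cor}(a). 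The case $n=3$ is trivial because $|H^i(\Sig 2,\cdot)|$ is $2$-torsion for $i\ge 1$.

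For the uniqueness assertion when $n=p$, I would split $\mu\vdash p$ into hooks and non-hooks. If $\mu$ is not a hook, then $\mu_1+l(\mu)-1<p$, so $\mu$ has no hook of length $p$, its $p$-core equals $\mu$ itself (which has at least two rows), and Corollary~\ref{pcore coho} excludes $\mu$ from $\Cp 2$. Among the hooks $\mu=(p-j,1^j)$, Lemma~\ref{some Z 2cohos}(a) and~(b) handle $j=0$ (integral $H^2$ of order $2$) and $j=p-1$ (trivial except at $p=3$, where $(1^p)=\la$). For $1\le j\le p-2$ with $p>3$, Corollary~\ref{hook coho 2}(a) permits only $j\in\{2,3\}$: $j=2$ is $\la$, while for $j=3$ Corollaries~\ref{hook coho 0} and~\ref{hook coho 1} give $d_0^\mu=d_1^\mu=0$, whence Corollary~\ref{bockstein specht}(a) yields $x_2^\mu=0$ and $\mu\notin\Cp 2$. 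For $p=3$ the only admissible hook with $1\le j\le p-2$ is $j=1$, handled by Lemma~\ref{some Z 2cohos}(c).

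The main subtlety I anticipate is the $j=3$ subcase above: there $d_2^\mu=1$, so the coarse rule ``nonvanishing mod-$p$ cohomology implies $\Cp 2$ membership'' fails, and one must invoke the alternating-sum identity of Corollary~\ref{bockstein specht}(a) to verify that the $p$-rank sits in degree~$3$ rather than degree~$2$.
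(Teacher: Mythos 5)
Your proof is correct, and its overall skeleton matches the paper's: membership via $x_2^\la=d_1^\la-d_0^\la$ from Corollaries \ref{hook coho 0}, \ref{hook coho 1} and \ref{bockstein specht}(a), predecessors by listing $\la-$, and uniqueness for $n=p$ by reducing to hook partitions. The differences lie in the exclusion steps. For the predecessors, the paper observes that $(n-2,1)$ and $(n-3,1^2)$ lie outside the principal block (their $p$-cores have more than one row) and invokes Corollary \ref{pcore coho}; you instead use irreducibility of $\Smp$ for $p\nmid n-1$ and the vanishing in Proposition \ref{H2hook} to get $d_2^\mu=0$. Both work; the block argument is slightly cheaper since it needs no case analysis of the exceptions. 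For uniqueness at $n=p$, the paper disposes of all hooks with $3\le j\le n-1$ in one stroke via $\Cp 2\subseteq\C^1(\Fp)$ and $d_1=0$ (Corollary \ref{hook coho 1}), whereas you go through $d_2$ and are then forced into the extra alternating-sum step for $j=3$, where $d_2^\mu=1$ but $x_2^\mu=d_1^\mu-d_0^\mu=0$; your handling of that subtlety is correct, but the paper's route through first-degree cohomology avoids it entirely. Finally, you treat the edge case $n=p=3$ separately via Lemma \ref{some Z 2cohos}(b), which is a genuine point in your favour: the standing hypotheses of Corollaries \ref{hook coho 0}--\ref{hook coho 2} exclude $j=2$ when $p=3=n$, so the paper's appeal to them does not literally cover $\la=(1^3)$, while your direct computation $H^2(\Sig 3,S^{(1^3)}_\Z)\cong\Z/3\Z$ closes that gap.
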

\begin{proof}
By Proposition \ref{H2hook}, we have $\la \notin \C^2(\Fp) \supseteq \Cp 2$ for $p \nmid n$. Now let $p \Md n$. By Corollary \ref{hook coho 0}, we have $\Hlp 0 = 0$, and by Corollary \ref{hook coho 1}, we have $\Hlp 1 \cong \Fp$. Hence $x_2^\la = d_1^\la - d_0^\la = 1$, which means $\la \in \Cp 2$ by Corollary \ref{bockstein specht} (a).

The predecessors of $\la$ in $\Y$ are given by
$$
\la- = \{(n-3,1^2),(n-2,1)\}.
$$
Neither belongs to the principal block, and hence they are not in $\C^2(\Fp) \supseteq \Cp 2$ (cf. Corollary \ref{pcore coho}).

For $n=p$, only the hook partitions belong to the principal block. By Lemma \ref{some Z 2cohos}, $(n)$ and $(n-1,1)$ are not in $\Cp 2$ for all $n$, and $(1^n) \notin \Cp 2$ for $n > 3$. For $n=p>3$ and $3 \le j \le n-1$, we have $(n-j,1^j) \notin \C^1(\Fp) \supseteq \Cp 2$ by Corollary \ref{hook coho 1}. Hence the last statement follows.
\end{proof}

The previous lemma implies that every odd prime divisor of $|H^2(\Sig n,S_\Z^{(n-2,1^2)})|$ is contained in the same elementary divisor of $H^2(\Sig n,S_\Z^{(n-2,1^2)})$. But an even stronger statement might hold:

\begin{Conjecture}
Let $n \ge 4$. Then we have
$$
H^2(\Sig n,S^{(n-2,1^2)}_\Z) \cong \left\{\begin{array}{ll} \Z/2n\Z & \mbox{for n odd,} \\ \Z/\frac{n}{2}\Z & \mbox{for n even.} \end{array}\right.
$$
\end{Conjecture}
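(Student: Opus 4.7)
The plan is to identify $S^{(n-2,1^2)}_\Z$ with the exterior square $\bigwedge^2 V_\Z$, where $V_\Z := S^{(n-1,1)}_\Z$ is the augmentation ideal of the natural permutation $\Z\Sig n$-module $\Z^n$; this isomorphism is classical for hook Specht modules and can be made explicit by matching a natural basis $v_i \wedge v_j$ of $\bigwedge^2 V_\Z$ (with $v_i = e_i - e_n$) to the polytabloid basis of $S^{(n-2,1^2)}_\Z$ inside $M^{(n-2,1,1)}_\Z$. Applying $\bigwedge^2$ to the augmentation sequence $0 \to V \to \Z^n \to \Z \to 0$ then produces the Koszul-type short exact sequence of $\Z\Sig n$-modules
$$
0 \lra \bigwedge^2 V \lra \bigwedge^2 \Z^n \lra V \lra 0,
$$
whose quotient is the equivariant map $e_i \wedge e_j \mapsto e_i - e_j$ for $i<j$.

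The next step is to compute $H^*(\Sig n, \bigwedge^2 \Z^n)$ via Shapiro's lemma: the stabiliser of $e_1 \wedge e_2$ in $\Sig n$ is $\Sig{n-2} \times \Sig 2$ acting by $\Z \boxtimes \mathrm{sgn}$, whence $H^i(\Sig n, \bigwedge^2 \Z^n) \cong H^i(\Sig{n-2} \times \Sig 2, \Z \boxtimes \mathrm{sgn})$. The K\"unneth formula, combined with the standard $H^*(\Sig 2, \mathrm{sgn}) = 0, \Z/2\Z, 0, \Z/2\Z, \ldots$ and $H^*(\Sig{n-2}, \Z)$, yields $H^1 \cong \Z/2\Z$ (from the tensor summand $H^0(\Sig{n-2}, \Z) \otimes H^1(\Sig 2, \mathrm{sgn})$) and $H^2 \cong \Z/2\Z$ (from $\mathrm{Tor}^\Z_1(H^2(\Sig{n-2}, \Z), H^1(\Sig 2, \mathrm{sgn})) \cong \mathrm{Tor}(\Z/2\Z, \Z/2\Z)$) for $n \ge 4$. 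Using Lemma \ref{some Z 1cohos}(b), Lemma \ref{some Z 2cohos}(c), and Lemma \ref{bockstein H0}(a) (so that the $H^0$-terms all vanish), the relevant segment of the long exact cohomology sequence becomes
$$
0 \lra H^1(\Sig n, \bigwedge^2 V) \lra \Z/2\Z \slra{f} \Z/n\Z \slra{\partial} H^2(\Sig n, \bigwedge^2 V) \slra{g} \Z/2\Z \slra{h} H^2(\Sig n, V),
$$
with $H^2(\Sig n, V) \cong \Z/2\Z$ for even $n$ and trivial for odd $n$.

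For odd $n$, the map $f$ is forced to vanish (since $\Z/n\Z$ has no $2$-torsion) and $h$ is the zero map (its target is $0$), so $\partial$ is injective and $g$ is surjective, producing a short exact sequence $0 \to \Z/n\Z \to H^2(\Sig n, \bigwedge^2 V) \to \Z/2\Z \to 0$ which splits because $\gcd(n,2)=1$, giving the predicted $H^2 \cong \Z/2n\Z$.

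For even $n$, one first shows that $H^1(\Sig n, \bigwedge^2 V) = 0$: by \cite[24.4 Theorem]{J}, $S^{(n-2,1^2)}_{\Fp}$ admits a trivial submodule iff $\la_i \equiv -1\ (\mod p^{z_i})$ for all $i$, and this condition fails at $i=2$ for every odd $p$ (as $1 \not\equiv -1\ (\mod p)$) and at $i=1$ for even $n$ (as $n-2 \not\equiv -1\ (\mod 2)$); hence $d_0^\la = 0$ at every prime, and Corollary \ref{bockstein specht}(a) forces $x_1^\la = 0$ for every $p$, so $H^1 = 0$. Consequently $f$ is injective, its image is the unique order-$2$ subgroup of $\Z/n\Z$, and $\partial$ embeds $\Z/(n/2)\Z$ into $H^2(\Sig n, \bigwedge^2 V)$ with cokernel $\mathrm{im}(g) \subseteq \Z/2\Z$. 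The conjecture thus reduces to proving that $h$ is injective (equivalently, $g = 0$), and I expect this to be the main obstacle. A natural attack is to identify the generator of $H^2(\Sig n, \bigwedge^2 \Z^n)$, which by K\"unneth lives in $\mathrm{Tor}(H^2(\Sig{n-2}, \Z), H^1(\Sig 2, \mathrm{sgn}))$, trace it through the quotient $e_i \wedge e_j \mapsto e_i - e_j$, and compare its image with the generator of $H^2(\Sig n, V) \cong \Z/2\Z$, which via the augmentation long exact sequence is pulled back from the class in $H^2(\Sig n, \Z) = \Z/2\Z$ classifying the double cover of $\Sig n$. Restricting everything to a Sylow $2$-subgroup of $\Sig n$ and detecting the relevant classes on elementary abelian $2$-subgroups, where cup products are transparent, may provide a cleaner cocycle-level argument.
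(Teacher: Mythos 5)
First, be aware that the paper does not prove this statement: it is a conjecture, supported only by GAP computations of the elementary divisors of the Zassenhaus matrix $Z_\la$ for $n \le 20$ (the only theoretical content nearby is Lemma \ref{(n-2,1^2) in Cp2}, which controls the odd-prime part of $|H^2|$). Your proposal is therefore not being measured against an argument in the paper but is attempting something strictly stronger, and for odd $n$ it essentially succeeds. The identification $S^{(n-2,1^2)}_\Z \cong \bigwedge^2 S^{(n-1,1)}_\Z$ is standard but must be checked as an isomorphism of $\Z$-lattices, not merely of $\Q\Sig n$-modules (a finite-index sublattice would change $H^2$); granting that, the Koszul sequence $0 \to \bigwedge^2 V \to \bigwedge^2 \Z^n \to V \to 0$, the Shapiro/K\"unneth computation giving $H^1(\Sig n,\bigwedge^2\Z^n) \cong H^2(\Sig n,\bigwedge^2\Z^n) \cong \Z/2\Z$ for $n \ge 4$, and the input data from Lemmas \ref{bockstein H0}, \ref{some Z 1cohos} and \ref{some Z 2cohos} all check out. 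For odd $n$ the six-term sequence then forces a split extension $0 \to \Z/n\Z \to H^2(\Sig n, S^{(n-2,1^2)}_\Z) \to \Z/2\Z \to 0$ with no further input, so that half of the conjecture becomes a theorem; your computation of $H^1(\Sig n,S^{(n-2,1^2)}_\Z)$ (namely $\Z/2\Z$ for $n$ odd, $0$ for $n$ even, via \cite[24.4 Theorem]{J} and Corollary \ref{bockstein specht}) is also correct.

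For even $n$ there is a genuine gap, which you yourself flag: your argument only pins $H^2(\Sig n, S^{(n-2,1^2)}_\Z)$ down to an extension $0 \to \Z/\frac{n}{2}\Z \to H^2 \to \ker(h) \to 0$ with $\ker(h) \subseteq \Z/2\Z$, i.e.\ to an abelian group of order $n/2$ or $n$. The conjectured answer requires the map $h \colon H^2(\Sig n,\bigwedge^2\Z^n) \cong \Z/2\Z \to H^2(\Sig n, S^{(n-1,1)}_\Z) \cong \Z/2\Z$ to be injective, and you offer only a plan (tracing the K\"unneth generator through $e_i \wedge e_j \mapsto e_i - e_j$ and detecting on a Sylow $2$-subgroup) rather than an argument. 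Nothing in the paper can close this for you, since its only evidence is the table of computed elementary divisors, which confirms $|H^2| = n/2$ (hence that $h$ is injective) only for even $n \le 20$. Until $h$ is shown to be nonzero, the even case remains open; the odd case, and the reduction of the even case to this single assertion, are genuine contributions beyond what the paper establishes.
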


This was computed in GAP with the help of the Zassenhaus algorithm for $n \le 20$.
\\ % set off

Lemma \ref{(n-2,1^2) in Cp2} tells us that for $m \in \N$ there is at least one path with start vertex $(mp-2,1^2)$.

\begin{Lemma} \label{(n-2,1^2) path}
Let $p > 3$ and $m \in \N$.
\begin{enumerate}
\item Every path in $\Cp 2$ starting in $(mp-2,1^2)$ has initial part
$$
(mp-2,1^2) \lra (mp-2,2,1) \lra \cdots \lra (mp-2,p-2,1).
$$
\item For $m=1$, every path in $\Cp 2$ starting in $(p-2,1^2)$ has initial part as in (a), continued by
$$
((p-2)^2,1) \lra ((p-2)^2,1^2) \lra (p-1,p-2,1^2).
$$
\end{enumerate}
\end{Lemma}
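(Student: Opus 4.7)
The plan is to proceed step by step along the claimed paths, combining Theorem \ref{Cp structure}(a) with Corollary \ref{pcore coho}. Any $\mu \in \Cp 2$ must lie in the principal $p$-block of its symmetric group, equivalently its $p$-core must have length at most one. Hence, if exactly one element of $\la+$ has this property, Theorem \ref{Cp structure}(a), which guarantees some successor in $\Cp 2$, forces it to be the successor; any path through $\la$ then continues uniquely.

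For part (a), the base case $(mp-2,1^2)\in\Cp 2$ is Lemma \ref{(n-2,1^2) in Cp2}. In the inductive step at $\la=(mp-2,j,1)$ with $1\le j\le p-3$, the set $\la+$ consists of $(mp-1,j,1)$, $(mp-2,j+1,1)$, $(mp-2,j,2)$ (only for $j\ge 2$), and $(mp-2,j,1^2)$. A direct computation via the $p$-abacus method (set up the $\beta$-numbers, reduce modulo $p$, push beads down on each runner) gives the $p$-cores $(p-1,j,1)$, $(j)$, $(j-1,1)$, and $(p-2,j,1,1)$, of lengths three, one, two, and four, respectively. Since only $(j)$ has at most one row, $(mp-2,j+1,1)$ is forced as the next vertex.

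For part (b), with $m=1$ the path continues past $((p-2)^2,1)$. Analogous abacus calculations show that the successors $(p-1,p-2,1)$, $(p-2,p-2,2)$, $((p-2)^2,1^2)$ of $((p-2)^2,1)$ have $p$-cores $(p-1,p-2,1)$, $(p-3,1)$, $(p-2)$, while the successors $(p-1,p-2,1^2)$, $(p-2,p-2,2,1)$, $((p-2)^2,1^3)$ of $((p-2)^2,1^2)$ have $p$-cores $(p-1)$, $(p-2,1)$, $(p-2,p-2)$. In each case only one partition, namely $((p-2)^2,1^2)$ in the first and $(p-1,p-2,1^2)$ in the second, has a single-row $p$-core, so the path is forced through these vertices.

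The main obstacle is keeping the abacus bookkeeping and case distinctions straight: $(mp-2,1,2)$ is not a valid partition when $j=1$, and $(p-2,p-1,1)$ fails to be a partition when $m=1$, which is precisely why the iteration in (a) halts at $j=p-3$ (endpoint $(mp-2,p-2,1)$) and why (b) is specialized to $m=1$ with a distinct choice of next vertex. Once these subtleties are handled, the conclusion follows uniformly from the $p$-core analysis.
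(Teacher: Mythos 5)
Your argument is essentially the paper's own proof: establish the start vertex via Lemma \ref{(n-2,1^2) in Cp2}, list $\la+$ at each step, compute the $p$-cores, and use Corollary \ref{pcore coho} together with the guaranteed existence of a successor to force the unique continuation (the paper carries this out explicitly for (a) and declares (b) ``analogous''). One immaterial slip: the $p$-core of $((p-2)^2,1^3)$ is not $(p-2,p-2)$ (wrong size modulo $p$) --- the partition is in fact its own $p$-core, with five rows --- but since it still has more than one row the exclusion goes through unchanged.
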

\begin{proof}
By Lemma \ref{(n-2,1^2) in Cp2}, we have paths beginning at $(mp-2,1^2)$. For $0 \le j \le p-4$ we have
$$
(mp-2,1+j,1)+ = \left\{ \begin{array}{ll}
 \{ (mp-1,1^2), (mp-2,2,1), (mp-2,1^3) \}	& \mbox{for } j=0, \\
 \{ (mp-1,1+j,1), (mp-2,2+j,1), \\
	(mp-2,1+j,2), (mp-2,1+j,1^2) \}	& \mbox{for } j>0.
\end{array}\right.
$$
The $p$-cores of these partitions are given by
$$
\begin{array}{rcl}
(mp-1,1+j,1)	& \mapsto & (p-1,1+j,1),\\
(mp-2,2+j,1)	& \mapsto & (j+1),	\\
(mp-2,1+j,2)	& \mapsto & (j,1),	\\
(mp-2,1+j,1^2)	& \mapsto & (p-2,1+j,1^2). \end{array}
$$
This means, by Corollary \ref{pcore coho}, that $(mp-2,2+j,1)$ is the only successor of $(mp-2,1+j,1)$ in $\Cp 2$ for all $0 \le j \le p-4$, and statement (a) follows.

Statement (b) can be proved analogously.
\end{proof}

\begin{Lemma} \label{p3 hook path}
Let $p = 3$ and $m \in \N$. There is a path in $\C_3^2$ with initial part
$$
(3m-2,1^2) \lra (3m-2,1^3) \lra (3m-1,1^3)
$$
$$
\lra (3m-1,1^4) \lra (3m-1,2,1^3).
$$
\end{Lemma}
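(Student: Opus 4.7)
The plan is to verify that each of the five listed partitions lies in $\Cp 2$ and to check that consecutive entries differ by exactly one node (which is immediate). By Lemma \ref{(n-2,1^2) in Cp2} applied with $n = 3m$, the starting vertex $(3m-2,1^2)$ already belongs to $\Cp 2$ and has no predecessor there, so it is eligible as the start of a path; once the remaining four vertices are also confirmed to lie in $\Cp 2$, Theorem \ref{Cp structure} (a) extends the sequence to an infinite path.

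For the vertices $(3m-2,1^3)$ and $(3m-1,1^3)$, in each case $3 \nmid n$, so $\Slp = h_{n,3}$ is irreducible and not the trivial module, whence $d_0^\la = 0$. Proposition \ref{H1hook} case (ii) gives $d_1^\la = 1$, and Corollary \ref{bockstein specht} (a) yields $x_2^\la = d_1^\la - d_0^\la = 1$. For the hook $(3m-1,1^4)$ we have $3 \mid n = 3m+3$ with $j = 4$, so Corollary \ref{hook coho 0} gives $d_0^\la = 0$ and Corollary \ref{hook coho 1} gives $d_1^\la = 1$ (the exceptional case $[p,j] = [3,4]$); the same argument again yields $x_2^\la = 1$.

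The hard part will be $(3m-1,2,1^3)$, which is not a hook and hence is not directly covered by the results of Section \ref{comp}. I would deal with it indirectly via Theorem \ref{Cp structure} (a): since $(3m-1,1^4) \in \Cp 2$, it must possess a successor in $\Cp 2$. The successors in $\Y$ are
$$
\{(3m,1^4),\ (3m-1,2,1^3),\ (3m-1,1^5)\}.
$$
A short computation on the $3$-abacus shows that the $3$-cores of these three partitions are $(3,1)$, $(1)$, and $(2,1,1)$, respectively. By Corollary \ref{pcore coho}, the first and third partitions lie outside $\C^2(\F_3) \supseteq \Cp 2$, so the promised successor must be $(3m-1,2,1^3)$, completing the verification of the initial segment.
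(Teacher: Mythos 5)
Your proof is correct and follows essentially the same route as the paper's: the hook cohomology results of Section \ref{comp} together with the Bockstein relation $x_2^\la = d_1^\la - d_0^\la$ for the hook vertices, and the forced-successor argument (Theorem \ref{Cp structure} (a) combined with the $3$-core computation and Corollary \ref{pcore coho}) for the non-hook vertex $(3m-1,2,1^3)$. The only, harmless, difference is that you verify $(3m-1,1^3) \in \C_3^2$ directly via irreducibility and Proposition \ref{H1hook}, whereas the paper deduces it as the unique principal-block successor of $(3m-2,1^3)$.
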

\begin{proof}
By Lemma \ref{(n-2,1^2) in Cp2}, a path begins at $(3m-2,1^2)$. Since $3 \nmid 3m+1$, it follows that $S^{(3m-2,1^3)}_{\F_3}$ is irreducible, but it is not isomorphic to the trivial module. From that and from Proposition \ref{H1hook} and Corollary \ref{bockstein specht} we obtain
$$
x^{(3m-2,1^3)}_2 = d^{(3m-2,1^3)}_1 - d^{(3m-2,1^3)}_0 = 1.
$$

Now the only successor of $(3m-2,1^3)$ in the principal block is $(3m-1,1^3)$.

Since $3 \mid 3m+3$, it follows that $S^{(3m-1,1^4)}_{\F_3}$ has no trivial module in its socle. From that and from Corollaries \ref{hook coho 1} and \ref{bockstein specht} we obtain
$$
x^{(3m-1,1^4)}_2 = d^{(3m-1,1^4)}_1 - d^{(3m-1,1^4)}_0 = 1.
$$

Finally, the only successor of $(3m-1,1^4)$ in the principal block is $(3m-\nolinebreak1,2,1^3)$.
\end{proof}

Note that in the first and third steps there might be more successors than the given ones.

\begin{Conjecture}
The only successor of $(3m-1,1^3)$ in $\C_3^2$ is $(3m-1,1^4)$.
\end{Conjecture}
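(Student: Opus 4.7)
The three successors of $\la := (3m-1, 1^3)$ in $\Y$ are
\[
\mu_1 := (3m, 1^3), \quad \mu_2 := (3m-1, 2, 1^2), \quad \mu_3 := (3m-1, 1^4),
\]
so the conjecture asserts that $\mu_3 \in \Cp 2$ while $\mu_1, \mu_2 \notin \Cp 2$. The first of these facts is already contained in Lemma \ref{p3 hook path}, so two cases remain.

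For $\mu_2$ the plan is to determine its $3$-core. When $m = 1$ we have $\mu_2 = (2, 2, 1, 1)$ directly; when $m \ge 2$, stripping a rim $3$-hook from the end of row $1$ sends $(3m-1, 2, 1^2)$ to $(3m-4, 2, 1^2)$ (legal because $3m - 3 > 2 = \la_2$), so iterating $m - 1$ times lands at $(2, 2, 1, 1)$. The hook lengths of $(2, 2, 1, 1)$ form the multiset $\{5, 4, 2, 2, 1, 1\}$, which contains no $3$, so $(2, 2, 1, 1)$ is its own $3$-core. Since it has four rows, Corollary \ref{pcore coho} gives $\mu_2 \notin \C^2(\F_3) \supseteq \Cp 2$.

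The hard case is $\mu_1 = (3m, 1^3)$. Its $3$-core is the one-row partition $(3)$, so Corollary \ref{pcore coho} is silent and $\mu_1$ lies in the principal $3$-block. Set $n := 3m + 3$. Corollary \ref{hook coho 0} gives $d_0^{\mu_1} = 0$, so Corollary \ref{bockstein specht}(a) reduces the claim $\mu_1 \notin \Cp 2$ to showing $H^1(\Sig n, S^{\mu_1}_{\F_3}) = 0$. Since $3 \mid n$, the composition series of $S^{\mu_1}_{\F_3}$ has the form $0 \le h_{n, 2} \le S^{\mu_1}_{\F_3}$ with quotient $h_{n, 3}$. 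Combining $H^1(\Sig n, h_{n, 2}) = 0$ (Proposition \ref{H1hook}), $H^1(\Sig n, h_{n, 3}) \cong \F_3$ (Proposition \ref{H1hook}(ii)), and $H^2(\Sig n, h_{n, 2}) \cong \F_3$ (Proposition \ref{H2hook}(i)), the long exact cohomology sequence yields
\[
0 \lra H^1(\Sig n, S^{\mu_1}_{\F_3}) \lra \F_3 \slra{\partial} \F_3,
\]
so $H^1(\Sig n, S^{\mu_1}_{\F_3}) = 0$ is equivalent to the connecting map $\partial$ being nonzero.

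Showing $\partial \ne 0$ will be the main obstacle. Proposition \ref{H2hook} explicitly omits the case $p = 3 \mid n$, $j = 3$, so the adjacent term $H^2(\Sig n, h_{n, 3})$ is not at our disposal, and a pure dimension count in the long exact sequence is inconclusive. Two routes look promising: (i) identify $h_{n, 3}$ with $D^\nu$ for the Mullineux image $\nu$ of the $3$-singular partition $(n-3, 1^3)$ and adapt the techniques of \cite[Proposition 5.5]{BKM} for two-part simples to compute $H^2(\Sig n, h_{n, 3})$ directly, so that the alternating sum in Corollary \ref{bockstein specht}(a) pins down $d_1^{\mu_1}$; or (ii) represent $\partial$ as the Yoneda product of the extension class of $0 \to h_{n, 2} \to S^{\mu_1}_{\F_3} \to h_{n, 3} \to 0$ with a generator of $H^1(\Sig n, h_{n, 3}) = {\rm Ext}^1_{\F_3 \Sig n}(\F_3, h_{n, 3})$ and show, using the explicit hook Specht structure, that the resulting $2$-extension of $\F_3$ by $h_{n, 2}$ is not split. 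In either case, Theorem \ref{H^2(Sn)} together with the Zassenhaus computations of Section \ref{zas} confirms $\partial \ne 0$ for every $m$ within the computable range and provides a valuable cross-check.
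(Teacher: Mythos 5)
This statement is posed as a \emph{Conjecture} in the paper and is not proved there, so there is no proof of record to compare yours against; what your attempt does is make precise where the difficulty sits. Two of the three successors are handled correctly: $(3m-1,1^4)\in\C_3^2$ follows from Lemma \ref{p3 hook path}, and $(3m-1,2,1^2)$ has $3$-core $(2,2,1^2)$, which has four rows, so Corollary \ref{pcore coho} excludes it. Your reduction of the remaining successor $(3m,1^3)$ is also sound: with $n=3m+3$, Corollaries \ref{hook coho 0} and \ref{bockstein specht} (a) give $x_2^{(3m,1^3)}=d_1^{(3m,1^3)}$, and the long exact sequence attached to $0\to h_{n,2}\to S^{(n-3,1^3)}_{\F_3}\to h_{n,3}\to 0$, combined with Propositions \ref{H1hook} and \ref{H2hook}, identifies $d_1^{(3m,1^3)}$ with the dimension of the kernel of a connecting map $\partial\colon\F_3\to\F_3$.

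The genuine gap is that you never prove $\partial\ne 0$. The two routes you sketch are programmatic rather than executed: neither the computation of $H^2(\Sig n,h_{n,3})$ for $p=3\mid n$ (precisely the case excluded by hypothesis from Proposition \ref{H2hook}) nor the non-splitting of the proposed Yoneda product is carried out, and the appeal to the Zassenhaus computations of Section \ref{zas} covers only finitely many $m$, which cannot establish the claim for all $m$. This is exactly the point the paper itself flags as open: Corollary \ref{hook coho 1} can only conclude $d_1^\la\le 1$ in the case $[p,j]=[3,3]$, and the remark following Lemma \ref{p3 hook path} warns that ``in the first and third steps there might be more successors than the given ones.'' Your argument therefore reproduces the reduction that makes the statement plausible, but it does not close the conjecture; a complete proof would require a new input, namely the vanishing of $H^1(\Sig{3m+3},S^{(3m,1^3)}_{\F_3})$ for all $m$.
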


For $m=1$, the only possible successor of $(3m-2,1^2)$ is $(3m-2,1^3)$. But in general there can be more than one successor:

\begin{Conjecture}
Let $n \ge 5$. Then we have
$$
H^2(\Sig n,S^{(n-3,2,1)}_\Z) \cong \left\{\begin{array}{ll} \Z/(n-1)\Z & \mbox{for } 3 \nmid n-1, \\ \Z/\frac{n-1}{3}\Z & \mbox{for } 3 \Md n-1. \end{array}\right.
$$
\end{Conjecture}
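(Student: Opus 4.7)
The plan is to determine the isomorphism type of $\Hlz 2$ for $\la = (n-3,2,1)$ prime by prime: first locate the primes $p$ dividing the order, then show the $p$-rank is at most $1$ so that each $p$-primary component is cyclic, and finally pin down the exact $p$-adic valuation. By Corollary \ref{pcore coho}, only primes whose $p$-core of $\la$ has at most one row can contribute, which already restricts the candidates considerably.

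For the first step (the $p$-rank), I would apply Corollary \ref{bockstein specht} (a): since $\la \ne (n)$ one has $x_2^\la = d_1^\la - d_0^\la$, and the criterion in \cite[24.4 Theorem]{J} forces $d_0^\la = 0$ for this $\la$. To compute $d_1^\la$ I would determine the composition series of $\Slp$ (its factors are $D^\mu$ with $\mu \rhd \la$ lying in the principal block, hence among $\{(n),(n-1,1),(n-2,2),(n-2,1^2),(n-3,2,1)\}$), and then feed the resulting short exact sequences into the long cohomology sequence, inserting the known values of $H^1$ for hooks from Proposition \ref{H1hook} and for two-part partitions from Proposition \ref{H12part}. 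The expectation, borne out by the numerical data from Section \ref{zas}, is that $d_1^\la = 1$ precisely when $p \mid n-1$, with the generator coming from $H^1(\Sig n,D^{(n-1,1)})$; the additional composition factor present when $3 \mid n-1$ should account for the loss of exactly one factor of $3$ in that case.

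For the second step (the exact $p$-power), I would pass to coefficients $\Slz/p^m\Slz$ for varying $m$. By an iterated Bockstein argument based on the exact sequence $0 \lra \Slz \slra{\cdot p^m} \Slz \lra \Slz/p^m\Slz \lra 0$, the cyclic $p$-part of $\Hlz 2$ has order exactly $p^m$ if and only if the connecting map $H^1(\Sig n,\Slz/p^m\Slz) \lra \Hlz 2$ hits an element of order $p^m$ but not $p^{m+1}$. More concretely, one can compute the Smith normal form of the $p$-localised Zassenhaus matrix $Z_\la$ over $\Z_{(p)}$ using the explicit generating matrices $A,B$ of Section \ref{zas}; the relatively simple tableau combinatorics of $(n-3,2,1)$ should reduce this to a family of matrices indexed by $n$ whose $p$-adic behaviour can be controlled inductively, and combining this with the branching predecessor $(n-4,2,1)$ in $\Sig{n-1}$ should propagate the exact cyclic order from one value of $n$ to the next.

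The main obstacle is precisely this last refinement. Bockstein homomorphisms determine the $p$-rank cleanly, but they give no a priori hold on the higher $p$-adic valuation, and the branching/Shapiro argument underlying Theorem \ref{Cp structure} conveys divisibility only. The delicate point will be to prove that the single elementary divisor of $\Hlz 2$ acquires no extra factors of $p$ beyond the one forced by $p \mid n-1$, and conversely that no factor is lost; the anomalous drop by exactly one factor of $3$ when $3 \mid n-1$ strongly suggests that a secondary Bockstein operation detects a specific class. Matching this class against the concrete cohomology of $\Sig n$, or alternatively carrying out a fully inductive $p$-adic Smith normal form computation for the relevant family of Zassenhaus matrices, will be the decisive technical challenge.
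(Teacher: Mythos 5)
The statement you are addressing is labelled a \emph{Conjecture} in the paper, and the paper offers no proof of it: the displayed isomorphism is only verified computationally (in GAP, via the Zassenhaus algorithm of Section \ref{zas}) for $n \le 20$. So there is no paper proof to compare against, and your proposal does not supply one either, as you concede in your final paragraph. The first half of your plan is sound in spirit and matches what the paper does for other families: Corollary \ref{pcore coho} restricts the relevant primes, and Corollary \ref{bockstein specht} (a) together with the composition series of $\Slp$ and the data of Propositions \ref{H1hook} and \ref{H12part} can in principle give $x_2^\la = d_1^\la - d_0^\la$, hence cyclicity of each $p$-primary part and the set of odd primes dividing the order. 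Two concrete caveats, though: your blanket claim that \cite[24.4 Theorem]{J} forces $d_0^\la = 0$ is false for $p=3$ and $n \equiv 2 \pmod 3$ (there $(n-3,2,1)$ satisfies James's congruence conditions and $d_0^\la = 1$), and this matters for the internal consistency of your rank count in exactly that case; and the mod-$p$ input from \cite{BKM} that you invoke is stated only for odd $p$, whereas the conjectured order $n-1$ has a nontrivial and sometimes large $2$-part (e.g.\ $16$ for $n=17$), so the $p=2$ analysis is not covered by your step one at all.

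The decisive gap is the one you name yourself: every tool available in the paper --- Bockstein ranks, Shapiro/branching divisibility as in Theorem \ref{Cp structure}, block theory --- detects only the first power of $p$, i.e.\ the radical of $|\Hlz 2|$, while the conjecture asserts the exact order, including the full $p$-adic valuation of $n-1$ and the anomalous loss of exactly one factor of $3$ when $3 \mid n-1$. Your two proposed remedies (an iterated Bockstein with coefficients $\Slz/p^m\Slz$, and an inductive $p$-adic Smith normal form of the Zassenhaus matrices $Z_\la$) are named but not carried out: no family of matrices is exhibited, no inductive step is proved, and the transfer/restriction argument propagates only the \emph{existence} of $p$-torsion from $(n-4,2,1)$ to $(n-3,2,1)$, not its exact order. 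What you have written is an honest research programme whose unresolved step is precisely why the statement remains a conjecture in the paper.
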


Again, this was computed in GAP for $n \le 20$. If the conjecture turns out to be true, this would mean that $(3m-2,2,1) \in \C^2_3$ if and only if $3 \Md m$.
\\ % set off

Now we are going to take a look at another type of path.

\begin{Lemma} \label{(n-p,p) in Cp2}
Let $n \ge 2p$ and $p \Md n$. Then $\la := (n-p,p) \in \Cp 2$ with $x_2^\la = 1$, and it has no predecessor in $\Cp 2$.
\end{Lemma}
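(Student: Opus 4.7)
The plan is to feed $\lambda = (n-p, p)$ into Corollary~\ref{2part cohos} and then rule out both candidate predecessors using either the same corollary or the block-theoretic Corollary~\ref{pcore coho}.

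First, since $p \mid n$, we have $n+1 \equiv 1 \pmod p$, so in the language of Corollary~\ref{2part cohos} the relevant residue is $j = 1$, putting us in case (b). This immediately gives $d_0^\lambda = 0$, $d_1^\lambda = 1$ and $d_2^\lambda \ge 1$. Feeding these into Corollary~\ref{bockstein specht}(a) yields $x_2^\lambda = d_1^\lambda - d_0^\lambda = 1$; in particular $p$ divides $|\Hlz 2|$, so $\lambda \in \Cp 2$, and it contributes exactly one elementary divisor divisible by $p$.

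For the predecessor statement, observe that $\lambda- \subseteq \{(n-p-1, p),\, (n-p, p-1)\}$, where the first partition is present only when $n > 2p$ (and then $n \ge 3p$ by $p \mid n$). For the partition $(n-p-1, p) \vdash n-1$, we reapply Corollary~\ref{2part cohos}: now $(n-1)+1 = n \equiv 0 \pmod p$, so we are in case (a) with $j = 0$, which gives $d_2 = 0$. By Corollary~\ref{bockstein cor} this forces $p \nmid |H^2(\Sigma_{n-1}, S^{(n-p-1,p)}_\Z)|$, so $(n-p-1, p) \notin \Cp 2$.

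For $(n-p, p-1) \vdash n-1$, I would argue via the $p$-core and the Nakayama block criterion in Corollary~\ref{pcore coho}. The two-row beta-numbers are $n-p+1$ and $p-1$, with residues $1$ and $p-1$ mod $p$; sliding beads down on the $p$-abacus reduces them to the minimal representatives $\{1, p-1\}$, which convert back to the partition $(p-2, 1)$. Since $p$ is odd, this $p$-core has two rows, so by Corollary~\ref{pcore coho} we again get $p \nmid |H^2(\Sigma_{n-1}, S^{(n-p,p-1)}_\Z)|$. Combining the two, $\lambda$ has no predecessor in $\Cp 2$. The only delicate point is the boundary case $n = 2p$, where one must check that $(n-p-1, p) = (p-1, p)$ is not a partition and therefore drops out of $\lambda-$; this is the main ``obstacle'', but it is really just a bookkeeping check — the substantive work was already done by Corollary~\ref{2part cohos}.
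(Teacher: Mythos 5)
Your proof is correct and follows essentially the same route as the paper's: Corollary~\ref{2part cohos}(b) together with Corollary~\ref{bockstein specht}(a) gives $x_2^\la = 1$, and the two candidate predecessors are eliminated by Corollary~\ref{2part cohos}(a) (for $(n-p-1,p)$, noting $n\ge 3p$ when it exists) and by the principal-block criterion of Corollary~\ref{pcore coho} (for $(n-p,p-1)$). The only difference is cosmetic: you spell out the abacus computation of the $p$-core $(p-2,1)$, which the paper merely asserts.
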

\begin{proof}
By Corollaries \ref{2part cohos} (b) and \ref{bockstein specht} (a), we have $x_2^\la = 1$, which means $\la \in \Cp 2$. The predecessors of $\la$ in $\Y$ are $(n-p-1,p)$ for $n > 2p$ and $(n-p,p-1)$ for all $n$. But $(n-p,p-1)$ does not belong to the principal block, and $(n-p-1,p) \notin \Cp 2$ by Corollaries \ref{2part cohos} (a) and \ref{bockstein specht} (a).
\end{proof}

\begin{Lemma} \label{(n-p,p) path}
Let $m \in \N$. Every path in $\Cp 2$ starting in $(mp,p)$ has the initial part
$$
(mp,p) \lra (mp+1,p) \lra \cdots \lra (mp+p-2,p) \lra (mp+p-2,p,1).
$$
For the two-part partitions in this path we have $x_2^\la = 1$.
\end{Lemma}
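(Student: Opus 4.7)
The plan is to walk along the candidate path one vertex at a time, showing that at each step all but one element of $\la+$ can be excluded from $\Cp 2$, so that Theorem \ref{Cp structure}(a) forces the remaining one to be the successor in $\Cp 2$. The starting vertex $(mp,p)$ lies in $\Cp 2$ by Lemma \ref{(n-p,p) in Cp2}, so the inductive engine is in place.

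The numerical claim about the two-part partitions can be handled in one shot. For $\la = (mp+k,p)$ with $0 \le k \le p-2$, one has $n = (m+1)p+k$ and $n+1 \cmp{k+1}$ with $1 \le k+1 \le p-1$. Hence Corollary \ref{2part cohos}(b) applies with $j = k+1$ and yields $d_0^\la = 0$, $d_1^\la = 1$. Corollary \ref{bockstein specht}(a) then gives $x_2^\la = d_1^\la - d_0^\la = 1$, so each two-part $\la$ on the path lies in $\Cp 2$ with $x_2^\la = 1$. This secures both the $x_2^\la = 1$ assertion and the membership of the first $p-1$ vertices of the path in $\Cp 2$.

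For the inductive step ($0 \le k \le p-3$), I examine $\la+$ for $\la = (mp+k,p)$:
$$\la+ \subseteq \{(mp+k+1,p),\ (mp+k,p+1),\ (mp+k,p,1)\},$$
where the middle entry is absent when $m=1$ and $k=0$. The first belongs to $\Cp 2$ by the paragraph above. For the other two, a short abacus computation (two beads for $(mp+k,p+1)$, three for $(mp+k,p,1)$) shows that their $p$-cores have two and three rows respectively, so Corollary \ref{pcore coho} excludes them from $\C^2(\Fp) \supseteq \Cp 2$. Theorem \ref{Cp structure}(a) then forces $(mp+k+1,p)$ to be the successor in $\Cp 2$. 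For the terminal step $k = p-2$ the three candidates are $(mp+p-1,p)$, $(mp+p-2,p+1)$, and $(mp+p-2,p,1)$. Here $(mp+p-1,p)$ satisfies $n+1 = (m+2)p$, which is the case $j=0$ of Corollary \ref{2part cohos}(a), so $d_2^\la = 0$ and $(mp+p-1,p) \notin \Cp 2$; $(mp+p-2,p+1)$ is killed by the same $p$-core argument as before (two rows); and the abacus computation for $(mp+p-2,p,1)$ produces the single-row $p$-core $(p-1)$, so block theory provides no obstruction. Theorem \ref{Cp structure}(a) once more forces this remaining candidate into $\Cp 2$.

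The main obstacle is keeping the $p$-core bookkeeping straight --- in particular the tiny boundary case $m=1, k=0$, where $(mp,p+1)$ is not a valid partition and drops out of $\la+$ automatically --- and making sure no element of $\la+$ has been overlooked; the individual abacus calculations themselves are short and routine.
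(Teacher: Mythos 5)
Your proposal is correct and follows essentially the same route as the paper, which proves this lemma by the exact recipe you describe: Corollary \ref{2part cohos}(b) together with Corollary \ref{bockstein specht}(a) gives $x_2^\la=1$ for the two-part partitions, the $p$-core computation plus Corollary \ref{pcore coho} eliminates all but one element of $\la+$ at each step, and Corollary \ref{2part cohos}(a) disposes of $(mp+p-1,p)$ at the final step, so Theorem \ref{Cp structure}(a) forces the path. Your abacus calculations (cores with two rows for $(mp+k,p+1)$, three rows for $(mp+k,p,1)$ when $k\le p-3$, and the single-row core $(p-1)$ for $(mp+p-2,p,1)$) and your handling of the boundary case $m=1$, $k=0$ are all accurate.
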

\begin{proof}
The principle of the proof is the same as in Lemma \ref{(n-2,1^2) path}. We put in Lemma \ref{(n-p,p) in Cp2} and Corollaries \ref{2part cohos} and \ref{bockstein specht}.
\end{proof}

For odd $p$, we have seen two types of partitions in $\Cp 2$ without predecessor. At least for small $n$ those are the only ones, as computations in GAP show.

The graph $\C_2^2$ is much more involved than $\Cp 2$ for odd $p$. This is not very surprising, because of Lemma \ref{graph connection} and the fact that $\F_2$-cohomology is even less well understood than $\Fp$-cohomology for odd $p$. The tools we used above do not work in the case $p=2$, because too many partitions belong to the principal block.

But a few things can be observed:

\begin{Conjecture} \label{conj2}
Let $\la := (2l,2,1^q)$ for some $l \in \N$ and $q \in \Nn$. Then $\la \in \C_2^2$.
\end{Conjecture}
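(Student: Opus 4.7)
\emph{Reduction.} First, I would reduce to an $\F_2$-cohomology statement. By the Corollary following Lemma \ref{graph connection} (applied with $p = 2$), membership in $\C_2^1$ forces $\la_1 \equiv -1 \pmod{2^{z_1}}$, where $z_1 = \min\{r : 2^r > \la_2\}$. Here $\la_2 = 2$ gives $z_1 = 2$, so the condition becomes $2l \equiv 3 \pmod{4}$, which is impossible since $2l$ is even. Hence $\la \notin \C_2^1$, and Lemma \ref{graph connection}(a) reduces the conjecture to showing $H^1(\Sig n, S^\la_{\F_2}) \ne 0$. Equivalently, the same computation via \cite[24.4 Theorem]{J} gives $d_0^\la = 0$, so Corollary \ref{bockstein specht}(a) reads $x_2^\la = d_1^\la$, and the task becomes $d_1^\la \ge 1$.

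\emph{Induction on $q$.} For the base case $q = 0$ one must show that $H^1(\Sig{2l+2}, S^{(2l,2)}_{\F_2}) \ne 0$ for every $l \ge 1$. As noted after Corollary \ref{comp order}, the two-part composition-factor analysis from Section \ref{comp} carries over to $p = 2$ for $n > 4$, so the composition series of $S^{(2l,2)}_{\F_2}$ can be written down explicitly in terms of $D^{(n)}$, $D^{(n-1,1)}$ and (for $l \ge 2$) $D^{(n-2,2)}$; one then chases the long exact cohomology sequences, using the $\F_2$-cohomology of these hook and two-part simples. For the inductive step, Shapiro's Lemma combined with the Branching Theorem (cf.\ the proof of Theorem \ref{Cp structure}(a)) yields
$$
H^1\bigl(\Sig{n-1},\, S^{(2l,2,1^{q-1})}_{\F_2}\bigr) \cong H^1\bigl(\Sig n,\, S^{(2l,2,1^{q-1})}_{\F_2}\!\uparrow^{\Sig n}_{\Sig{n-1}}\bigr),
$$
and the induced module has a Specht filtration whose factors are indexed by $(2l,2,1^{q-1})+$, one of which is $(2l,2,1^q)$ itself. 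Applying Corollary \ref{pcore coho} to discard factors not in the principal $2$-block, and then analysing the long exact sequence coming from the filtration, should force $d_1^{(2l,2,1^q)} \ge 1$.

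\emph{Main obstacle.} Theorem \ref{Cp structure}(a) guarantees only the existence of \emph{some} successor of $(2l,2,1^{q-1})$ in $\C_2^2$, not the specific one we want. The hard part will be to show that the nontrivial $H^1$-class on the left of the Shapiro isomorphism is traceable to the $(2l,2,1^q)$-factor of the Specht filtration rather than to a competing successor such as $(2l+1,2,1^{q-1})$, $(2l,3,1^{q-1})$, or $(2l,2,2,1^{q-2})$. Block-theoretic arguments via Corollary \ref{pcore coho} will typically dispose of some but not all of these competitors, so a delicate control of the long exact sequence is required. A secondary obstacle is the base case, which lies outside the $p > 2$ machinery of Section \ref{comp} and demands either a genuine $p = 2$ extension of that analysis or a hands-on cohomological argument; for small $l$ the Zassenhaus output in the appendix already confirms the claim, but a uniform treatment is exactly what keeps the statement at the level of a conjecture here.
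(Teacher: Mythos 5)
The statement you are trying to prove is labelled a \emph{Conjecture} in the paper: the author offers no proof at all, only the GAP computations recorded in Appendix \ref{tables} for $n \le 13$ (and the remark that the tools of Sections \ref{comp} and \ref{path} break down for $p=2$ because too many partitions lie in the principal $2$-block). So there is no proof in the paper to compare yours against, and your submission is in any case a plan rather than a proof. Your opening reduction is correct and worth keeping: by the corollary describing $\Cp 1$, the condition $2l \equiv -1 \pmod 4$ fails, so $(2l,2,1^q) \notin \C_2^1$, and Lemma \ref{graph connection}(a) together with Lemma \ref{bockstein H0}(b) and Lemma \ref{bockstein general} does reduce the conjecture to $d_1^\la \ge 1$, i.e.\ to $H^1(\Sig n, S^\la_{\F_2}) \ne 0$.

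Beyond that point, both halves of your plan have genuine gaps, which you partly acknowledge. For the base case $q=0$ you appeal to the two-part analysis of Section \ref{comp}, but only Corollaries \ref{compfac 2p} and \ref{comp order} (the composition series) are asserted to carry over to $p=2$; the cohomological inputs --- Propositions \ref{H1hook}, \ref{H12part}, \ref{H22part1}, \ref{H22part2} from \cite{BKM} --- are stated for odd $p$ only, and the paper explicitly refers the $p=2$ case to \cite{KP} without importing any usable statements. So ``one then chases the long exact sequences'' has no inputs to chase with. For the inductive step, Shapiro's Lemma plus the Branching Theorem only shows that \emph{some} $\mu \in (2l,2,1^{q-1})+$ has $H^1(\Sig n, S^\mu_{\F_2}) \ne 0$, and for $p=2$ the block-theoretic pruning of Corollary \ref{pcore coho} eliminates essentially none of the competitors ($(2l+1,2,1^{q-1})$, $(2l,3,1^{q-1})$, $(2l,2^2,1^{q-2})$ typically share the $2$-core condition). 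You name this obstacle yourself but offer no mechanism to overcome it; without one, the induction does not close. The net effect is that your proposal reproduces the state of knowledge in the paper --- a correct reformulation plus computational evidence --- but does not constitute a proof.
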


% correct url to be inserted
In Appendix \ref{graphs}, the $\C_2^2$ for $n \le 13$ can be found. By looking at it, one might conjecture that every partition $\la := (\la_1, \ldots, \la_m)$ with $\la_1, \ldots, \la_{m-1}$ odd and $\la_1 \ne 1$ is in $\C_2^2$. Unfortunately, this is not true. Two exceptions are computed, namely $(11,4)$ and $(11,5)$, where cohomology is trivial. There are probably more.

\begin{appendix}

\twocolumn[

\section{Tables} \label{tables}

In the following table we have $\lambda \vdash n$ and $k = \rk(S^\lambda_\mathbb{Z})$. In the column of "e.d." we list the nontrivial elementary divisors of $H^2(\Sigma_n,S^\lambda_\mathbb{Z})$ if existent. If $H^2(\Sigma_n,S^\lambda_\mathbb{Z})$ is trivial, that is, cyclic of order $1$, the column has the entry 1.

The elementary divisors were determined in GAP as the elementary divisors of the Zassenhaus matrix $Z_\lambda$ corresponding to the presentation $G_n$ and the representation given by the matrices from the library {\ttfamily spechtmats.g}. In some cases the elementary divisors could not be determined explicitly, but only their prime divisors. In these cases, the entries are set in brackets.

The partitions are ordered lexicographically for every $n$. If one or more partitions are skipped, this is marked by three dots.\\\\
]

\begin{center}

$
\begin{array}{|l|l||l|l|} \hline
n & \lambda & k &  e. d. \\ \hline\hline

2 & ( 1^2 )  & 1 & 1 \\ \hline
  & ( 2 )    & 1 & 2 \\ \hline\hline

3 & ( 1^3 )   & 1 & 3 \\ \hline
  & ( 2, 1 )  & 2 & 1 \\ \hline
  & ( 3 )     & 1 & 2 \\ \hline\hline

4 & ( 1^4 )     & 1 & 3 \\ \hline
  & ( 2, 1^2 )  & 3 & 2 \\ \hline
  & ( 2^2 )     & 2 & 2 \\ \hline
  & ( 3, 1 )    & 3 & 2 \\ \hline
  & ( 4 )       & 1 & 2 \\ \hline\hline

5 & ( 1^5 )     & 1 & 1  \\ \hline 
  & ( 2, 1^3 )  & 4 & 3  \\ \hline
  & ( 2^2, 1 )  & 5 & 4  \\ \hline
  & ( 3, 1^2 )  & 6 & 10 \\ \hline
  & ( 3, 2 )    & 5 & 2  \\ \hline
  & ( 4, 1 )    & 4 & 1  \\ \hline
  & ( 5 )       & 1 & 2  \\ \hline\hline

6 & ( 1^6 )       & 1  & 1  \\ \hline
  & ( 2, 1^4 )    & 5  & 3  \\ \hline
  & ( 2^2, 1^2 )  & 9  & 4  \\ \hline
  & ( 2^3 )       & 5  & 1  \\ \hline
\end{array}
$

$
\begin{array}{|l|l||l|l|} \hline
n & \lambda & k &  e. d. \\ \hline\hline

6 & ( 3, 1^3 )    & 10 & 2  \\ \hline
  & ( 3, 2, 1 )   & 16 & 5  \\ \hline
  & ( 3^2 )       & 5  & 6  \\ \hline
  & ( 4, 1^2 )    & 10 & 3  \\ \hline
  & ( 4, 2 )      & 9  & 2  \\ \hline
  & ( 5, 1 )      & 5  & 2  \\ \hline
  & ( 6 )         & 1  & 2  \\ \hline\hline

7 & ( 1^7 )       & 1  & 1  \\ \hline
  & ( 2, 1^5 )    & 6  & 1  \\ \hline
  & ( 2^2, 1^3 )  & 14 & 6  \\ \hline
  & ( 2^3, 1 )    & 14 & 1  \\ \hline
  & ( 3, 1^4 )    & 15 & 2  \\ \hline
  & ( 3, 2, 1^2 ) & 35 & 2  \\ \hline
  & ( 3, 2^2 )    & 21 & 1  \\ \hline
  & ( 3^2, 1 )    & 21 & 10 \\ \hline
  & ( 4, 1^3 )    & 20 & 3  \\ \hline
  & ( 4, 2, 1 )   & 35 & 2  \\ \hline
  & ( 4, 3 )      & 14 & 3  \\ \hline
  & ( 5, 1^2 )    & 15 & 14 \\ \hline
  & ( 5, 2 )      & 14 & 2  \\ \hline
  & ( 6, 1 )      & 6  & 1  \\ \hline
  & ( 7 )         & 1  & 2  \\ \hline\hline

\end{array}
$

$
\begin{array}{|l|l||l|l|} \hline
n & \lambda & k &  e. d. \\ \hline\hline

8 & ( 1^8 )        & 1  & 1  \\ \hline
  & ( 2, 1^6 )     & 7  & 1  \\ \hline
  & ( 2^2, 1^4 )   & 20 & 6  \\ \hline
  & ( 2^3, 1^2 )   & 28 & 1  \\ \hline
  & ( 2^4 )        & 14 & 1  \\ \hline
  & ( 3, 1^5 )     & 21 & 2  \\ \hline
  & ( 3, 2, 1^3 )  & 64 & 1  \\ \hline
  & ( 3, 2^2, 1 )  & 70 & 2  \\ \hline
  & ( 3^2, 1^2 )   & 56 & 2, 10  \\ \hline
  & ( 3^2, 2 )     & 42 & 2  \\ \hline
  & ( 4, 1^4 )     & 35 & 1  \\ \hline
  & ( 4, 2, 1^2 )  & 90 & 2  \\ \hline
  & ( 4, 2^2 )     & 56 & 2  \\ \hline
  & ( 4, 3, 1 )    & 70 & 3  \\ \hline
  & ( 4^2 )        & 14 & 2  \\ \hline
  & ( 5, 1^3 )     & 35 & 6  \\ \hline
  & ( 5, 2, 1 )    & 64 & 7  \\ \hline
  & ( 5, 3 )       & 28 & 2  \\ \hline
  & ( 6, 1^2 )     & 21 & 4  \\ \hline
  & ( 6, 2 )       & 20 & 2  \\ \hline
  & ( 7, 1 )       & 7  & 2  \\ \hline
  & ( 8 )          & 1  & 2  \\ \hline\hline

9 & ( 1^9 )          & 1   & 1  \\ \hline
  & ( 2, 1^7 )       & 8   & 1  \\ \hline
  & ( 2^2, 1^5 )     & 27  & 2  \\ \hline
  & ( 2^3, 1^3 )     & 48  & 3  \\ \hline
  & ( 2^4, 1 )       & 42  & 1  \\ \hline
  & ( 3, 1^6 )       & 28  & 2  \\ \hline
  & ( 3, 2, 1^4 )    & 105 & 1  \\ \hline
  & ( 3, 2^2, 1^2 )  & 162 & 2  \\ \hline
  & ( 3, 2^3 )       & 84  & 1  \\ \hline
  & ( 3^2, 1^3 )     & 120 & 2, 2  \\ \hline
  & ( 3^2, 2, 1 )    & 168 & 1  \\ \hline
  & ( 3^3 )          & 42  & 2  \\ \hline
  & ( 4, 1^5 )       & 56  & 1  \\ \hline
  & ( 4, 2, 1^3 )    & 189 & 2  \\ \hline
  & ( 4, 2^2, 1 )    & 216 & 1  \\ \hline
  & ( 4, 3, 1^2 )    & 216 & 5  \\ \hline
  & ( 4, 3, 2 )      & 168 & 3  \\ \hline
  & ( 4^2, 1 )       & 84  & 1  \\ \hline
  & ( 5, 1^4 )       & 70  & 6  \\ \hline
  & ( 5, 2, 1^2 )    & 189 & 1  \\ \hline
  & ( 5, 2^2 )       & 120 & 2  \\ \hline
  & ( 5, 3, 1 )      & 162 & 14 \\ \hline
  & ( 5, 4 )         & 42  & 2  \\ \hline
  & ( 6, 1^3 )       & 56  & 1  \\ \hline
  & ( 6, 2, 1 )      & 105 & 8  \\ \hline
  & ( 6, 3 )         & 48  & 3  \\ \hline
  & ( 7, 1^2 )       & 28  & 18 \\ \hline
  & ( 7, 2 )         & 27  & 2  \\ \hline
\end{array}
$

$
\begin{array}{|l|l||l|l|} \hline
n & \lambda & k &  e. d. \\ \hline\hline

9 & ( 8, 1 )         & 8   & 1  \\ \hline
  & ( 9 )            & 1   & 2  \\ \hline\hline

10 & ( 1^{10} )       & 1   & 1  \\ \hline
   & ( 2, 1^8 )       & 9   & 1  \\ \hline
   & ( 2^2, 1^6 )     & 35  & 2  \\ \hline
   & ( 2^3, 1^4 )     & 75  & 3  \\ \hline
   & ( 2^4, 1^2 )     & 90  & 1  \\ \hline
   & ( 2^5 )          & 42  & 1  \\ \hline
   & ( 3, 1^7 )       & 36  & 2  \\ \hline
   & ( 3, 2, 1^5 )    & 160 & 1  \\ \hline
   & ( 3, 2^2, 1^3 )  & 315 & 2  \\ \hline %( ( 2 ), ( 1, 0 ) )
   & ( 3, 2^3, 1 )    & 288 & 1  \\ \hline %( (  ) )
   & ( 3^2, 1^4 )     & 225 & 2, 2  \\ \hline
   & ( 3^2, 2, 1^2 )  & 450 & 2  \\ \hline
   & ( 3^2, 2^2 )     & 252 & 1  \\ \hline
   & ( 3^3, 1 )       & 210 & 2  \\ \hline
   & ( 4, 1^6 )       & 84  & 1  \\ \hline
   & ( 4, 2, 1^4 )    & 350 & 2  \\ \hline
   & ( 4, 2^2, 1^2 )  & 567 & 1  \\ \hline %( (  ) )
   & ( 4, 2^3 )       & 300 & 1  \\ \hline
   & ( 4, 3, 1^3 )    & 525 & 1  \\ \hline
   & ( 4, 3, 2, 1 )   & 768 & 15 \\ \hline %( ( 3, 5 ), ( 1, 0 ), ( 1, 0 ) )
   & ( 4, 3^2 )       & 210 & 3  \\ \hline
   & ( 4^2, 1^2 )     & 300 & 1  \\ \hline
   & ( 4^2, 2 )       & 252 & 1  \\ \hline
   & ( 5, 1^5 )       & 126 & 2  \\ \hline
   & ( 5, 2, 1^3 )    & 448 & 3  \\ \hline
   & ( 5, 2^2, 1 )    & 525 & 1  \\ \hline
   & ( 5, 3, 1^2 )    & 567 & 2  \\ \hline %( ( 2 ), ( 1, 0 ) )
   & ( 5, 3, 2 )      & 450 & 2, 2  \\ \hline
   & ( 5, 4, 1 )      & 288 & 7  \\ \hline
   & ( 5^2 )          & 42  & 10 \\ \hline
   & ( 6, 1^4 )       & 126 & 1  \\ \hline
   & ( 6, 2, 1^2 )    & 350 & 4  \\ \hline
   & ( 6, 2^2 )       & 225 & 2  \\ \hline
   & ( 6, 3, 1 )      & 315 & 2  \\ \hline
   & ( 6, 4 )         & 90  & 2  \\ \hline
   & ( 7, 1^3 )       & 84  & 6  \\ \hline
   & ( 7, 2, 1 )      & 160 & 3  \\ \hline
   & ( 7, 3 )         & 75  & 6  \\ \hline
   & ( 8, 1^2 )       & 36  & 5  \\ \hline
   & ( 8, 2 )         & 35  & 2  \\ \hline
   & ( 9, 1 )         & 9   & 2  \\ \hline
   & ( 10 )           & 1   & 2  \\ \hline\hline

11 & ( 1^{11} )       & 1    & 1    \\ \hline
   & ( 2, 1^9 )       & 10   & 1    \\ \hline
   & ( 2^2, 1^7 )     & 44   & 2    \\ \hline
   & ( 2^3, 1^5 )     & 110  & 1    \\ \hline
   & ( 2^4, 1^3 )     & 165  & 3    \\ \hline
   & ( 2^5, 1 )       & 132  & 1    \\ \hline
\end{array}
$

$
\begin{array}{|l|l||l|l|} \hline
n & \lambda & k &  e. d. \\ \hline\hline

11 & ( 3, 1^8 )       & 45   & 2    \\ \hline
   & ( 3, 2, 1^6 )    & 231  & 1    \\ \hline
   & ( 3, 2^2, 1^4 )  & 550  & 2    \\ \hline %( ( 2 ), ( 1, 0 ) )
   & ( 3, 2^3, 1^2 )  & 693  & 1    \\ \hline %( (  ) )
   & ( 3, 2^4 )       & 330  & 1    \\ \hline
   & ( 3^2, 1^5 )     & 385  & 2,2  \\ \hline %( ( 2 ), ( 2, 0 ) )
   & ( 3^2, 2, 1^3 )  & 990  & 1    \\ \hline %( (  ) )
   & ( 3^2, 2^2, 1 )  & 990  & 2    \\ \hline %( ( 2 ), ( 1, 0 ) )
   & ( 3^3, 1^2 )     & 660  & 2,2  \\ \hline %( ( 2 ), ( 2, 0 ) )
   & ( 3^3, 2 )       & 462  & 2    \\ \hline %( ( 2 ), ( 1, 0 ) )
   & ( 4, 1^7 )       & 120  & 1    \\ \hline
   & ( 4, 2, 1^5 )    & 594  & 2    \\ \hline %( ( 2 ), ( 1, 0 ) )
   & ( 4, 2^2, 1^3 )  & 1232 & 1    \\ \hline %( (  ) )
   & ( 4, 2^3, 1 )    & 1155 & 1    \\ \hline %( (  ) )
   & ( 4, 3, 1^4 )    & 1100 & 1    \\ \hline %( (  ) )
   & ( 4, 3, 2, 1^2 ) & 2310 & 1    \\ \hline %( (  ) )
   & ( 4, 3, 2^2 )    & 1320 & 3    \\ \hline %( ( 3 ), ( 1, 0 ) )
   & ( 4, 3^2, 1 )    & 1188 & 5    \\ \hline %( ( 5 ), ( 1, 0 ) )
   & ( 4^2, 1^3 )     & 825  & 1    \\ \hline %( (  ) )
   & ( 4^2, 2, 1 )    & 1320 & 1    \\ \hline %( (  ) )
   & ( 4^2, 3 )       & 462  & 1    \\ \hline %( (  ) )
   & ( 5, 1^6 )       & 210  & 2    \\ \hline
   & ( 5, 2, 1^4 )    & 924  & 3    \\ \hline %( ( 3 ), ( 1, 0 ) )
   & ( 5, 2^2, 1^2 )  & 1540 & 1    \\ \hline %( (  ) )
   & ( 5, 2^3 )       & 825  & 1    \\ \hline %( (  ) )
   & ( 5, 3, 1^3 )    & 1540 & 2    \\ \hline %( ( 2 ), ( 1, 0 ) )
   & ( 5, 3, 2, 1 )   & 2310 & 1    \\ \hline %( (  ) )
   & ( 5, 3^2 )       & 660  & 2,6  \\ \hline %( ( 2, 3 ), ( 2, 0 ), ( 1, 0 ) )
   & ( 5, 4, 1^2 )    & 1155 & 1    \\ \hline %( (  ) )
   & ( 5, 4, 2 )      & 990  & 2    \\ \hline %( ( 2 ), ( 1, 0 ) )
   & ( 5^2, 1 )       & 330  & 14   \\ \hline
   & ( 6, 1^5 )       & 252  & 1    \\ \hline
   & ( 6, 2, 1^3 )    & 924  & 2    \\ \hline %( ( 2 ), ( 1, 0 ) )
   & ( 6, 2^2, 1 )    & 1100 & 1    \\ \hline %( (  ) )
   & ( 6, 3, 1^2 )    & 1232 & 1    \\ \hline %( (  ) )
   & ( 6, 3, 2 )      & 990  & 1    \\ \hline %( (  ) )
   & ( 6, 4, 1 )      & 693  & 4    \\ \hline %( ( 2 ), ( 1, 1, 0 ) )
   & ( 6, 5 )         & 132  & 5    \\ \hline
   & ( 7, 1^4 )       & 210  & 2    \\ \hline
   & ( 7, 2, 1^2 )    & 594  & 2    \\ \hline %( ( 2 ), ( 1, 0 ) )
   & ( 7, 2^2 )       & 385  & 2    \\ \hline
   & ( 7, 3, 1 )      & 550  & 2,18 \\ \hline %( ( 2, 3 ), ( 2, 0 ), ( 1, 1, 0 ) )
   & ( 7, 4 )         & 165  & 2    \\ \hline
   & ( 8, 1^3 )       & 120  & 3    \\ \hline
   & ( 8, 2, 1 )      & 231  & 10   \\ \hline
   & ( 8, 3 )         & 110  & 1    \\ \hline
   & ( 9, 1^2 )       & 45   & 22   \\ \hline
   & ( 9, 2 )         & 44   & 2    \\ \hline
   & ( 10, 1 )        & 10   & 1    \\ \hline
   & ( 11 )           & 1    & 2    \\ \hline\hline

\end{array}
$

$
\begin{array}{|l|l||l|l|} \hline
n & \lambda & k &  e. d. \\ \hline\hline

12 & ( 1^{12} )      & 1    & 1    \\ \hline
   & ( 2, 1^{10} )   & 11   & 1    \\ \hline
   & ( 2^2, 1^8 )    & 54   & 2    \\ \hline
   & ( 2^3, 1^6 )    & 154  & 1    \\ \hline
   & ( 2^4, 1^4 )    & 275  & 3    \\ \hline %( ( 3 ), ( 1, 0 ) )
   & ( 2^5, 1^2 )    & 297  & 1    \\ \hline %( (  ) )
   & ( 2^6 )         & 132  & 1    \\ \hline %( (  ) )
   & ( 3, 1^9 )      & 55   & 2    \\ \hline %( ( 2 ), ( 1, 0 ) )
   & ( 3, 2, 1^7 )   & 320  & 1    \\ \hline %( (  ) )
   & ( 3, 2^2, 1^5 ) & 891  & 2    \\ \hline %( ( 2 ), ( 1, 0 ) )
   & ( 3, 2^3, 1^3 ) & 1408 & 1    \\ \hline %( (  ) )
   & ( 3, 2^4, 1 )   & 1155 & 1    \\ \hline %( (  ) )
   & ( 3^2, 1^6 )    & 616  & 2,2  \\ \hline %( ( 2 ), ( 2, 0 ) )
   & ( 3^2, 2, 1^4 ) & 1925 & 1    \\ \hline %( (  ) )
   & ( 3^2, 2^2, 1^2 ) & 2673 & 2  \\ \hline %( ( 2 ), ( 1, 0 ) )
   & ( 3^2, 2^3 )    & 1320 & 1    \\ \hline %( (  ) )
   & ( 3^3, 1^3 )    & 1650 & 2,2  \\ \hline %( ( 2 ), ( 2, 0 ) )
   & ( 3^3, 2, 1 )   & 2112 & 1    \\ \hline %( (  ) )
   & ( 3^4 )         & 462  & 2    \\ \hline %( ( 2 ), ( 1, 0 ) )
   & ( 4, 1^8 )      & 165  & 1    \\ \hline %(  )
   & ( 4, 2, 1^6 )   & 945  & 2    \\ \hline %( ( 2 ), ( 1, 0 ) )
   & ( 4, 2^2, 1^4 ) & 2376 & 1    \\ \hline %( (  ) )
   & ( 4, 2^3, 1^2 ) & 3080 & 1    \\ \hline %7 PT (  )
   & ( 4, 2^4 )      & 1485 & 1    \\ \hline %( (  ) )
   & ( 4, 3, 1^5 )   & 2079 & 1    \\ \hline %( (  ) )
   & ( 4, 3, 2, 1^3 )& 5632 & \qquad ?    \\ \hline
   & ( 4, 3, 2^2, 1 )& 5775 & \qquad ?    \\ \hline
   & ( 4, 3^2, 1^2 ) & 4158 & \qquad ?    \\ \hline
   & ( 4, 3^2, 2 )   & 2970 & 1    \\ \hline %( (  ) )
   & ( 4^2, 1^4 )    & 1925 & 1    \\ \hline %( (  ) )
   & ( 4^2, 2, 1^2 ) & 4455 & \qquad ?    \\ \hline
   & ( 4^2, 2^2 )    & 2640 & 1    \\ \hline %( (  ) )
   & ( 4^2, 3, 1 )   & 2970 & 1    \\ \hline %(  ) 
   & ( 4^3 )         & 462  & 1    \\ \hline %( (  ) )
   & ( 5, 1^7 )      & 330  & 2    \\ \hline %( ( 2 ), ( 1, 0 ) )
   & ( 5, 2, 1^5 )   & 1728 & 1    \\ \hline %( (  ) )
   & ( 5, 2^2, 1^3 ) & 3696 & (3)  \\ \hline %7 PT: ( 3 ) - 
   & ( 5, 2^3, 1 )   & 3520 & 1    \\ \hline %7 PT (  )
   & ( 5, 3, 1^4 )   & 3564 & (2)  \\ \hline %7 PT: ( 2 ) - 
   & ( 5, 3, 2, 1^2 )& 7700 & \qquad ?    \\ \hline
   & ( 5, 3, 2^2 )   & 4455 & \qquad ?    \\ \hline
   & ( 5, 3^2, 1 )   & 4185 & \qquad ?    \\ \hline
   & ( 5, 4, 1^3 )   & 3520 & 1    \\ \hline %7 (  )
   & ( 5, 4, 2, 1 )  & 5775 & \qquad ?    \\ \hline
   & ( 5, 4, 3 )     & 2112 & 3    \\ \hline %( ( 3 ), ( 1, 0 ) )
   & ( 5^2, 1^2 )    & 1485 & 14   \\ \hline %( ( 2, 7 ), ( 1, 0 ), ( 1, 0 ) )
   & ( 5^2, 2 )      & 1320 & 2    \\ \hline %( ( 2 ), ( 1, 0 ) )
   & ( 6, 1^6 )      & 462  & 1    \\ \hline %( (  ) )
   & ( 6, 2, 1^4 )   & 2100 & 2    \\ \hline %( ( 2 ), ( 1, 0 ) )
   & ( 6, 2^2, 1^2 ) & 3564 & 1    \\ \hline %7 (  )
\end{array}
$

$
\begin{array}{|l|l||l|l|} \hline
n & \lambda & k &  e. d. \\ \hline\hline

12 & ( 6, 2^3 )      & 1925 & 1    \\ \hline %( (  ) )
   & ( 6, 3, 1^3 )   & 3696 & 1    \\ \hline %7 (  )
   & ( 6, 3, 2, 1 )  & 5632 & \qquad ?    \\ \hline
   & ( 6, 3^2 )      & 1650 & 1    \\ \hline %( (  ) )
   & ( 6, 4, 1^2 )   & 3080 & 2    \\ \hline %( ( 2 ), ( 1, 0 ) )
   & ( 6, 4, 2 )     & 2673 & 4    \\ \hline %( ( 2 ), ( 1, 1, 0 ) )
   & ( 6, 5, 1 )     & 1155 & 2    \\ \hline %( ( 2 ), ( 1, 0 ) )
   & ( 6^2 )         & 132  & 1    \\ \hline
   & ( 7, 1^5 )      & 462  & 2    \\ \hline
   & ( 7, 2, 1^3 )   & 1728 & 1    \\ \hline %( (  ) )
   & ( 7, 2^2, 1 )   & 2079 & 4    \\ \hline %( ( 2 ), ( 1, 1, 0 ) )
   & ( 7, 3, 1^2 )   & 2376 & 2,2  \\ \hline %( ( 2 ), ( 2, 0 ) )
   & ( 7, 3, 2 )     & 1925 & 6    \\ \hline %( ( 2, 3 ), ( 1, 0 ), ( 1, 0 ) )
   & ( 7, 4, 1 )     & 1408 & 3    \\ \hline %( ( 3 ), ( 1, 0 ) )
   & ( 7, 5 )        & 297  & 10   \\ \hline
   & ( 8, 1^4 )      & 330  & 3    \\ \hline
   & ( 8, 2, 1^2 )   & 945  & 2    \\ \hline %( ( 2 ), ( 1, 0 ) )
   & ( 8, 2^2 )      & 616  & 1    \\ \hline %( (  ) )
   & ( 8, 3, 1 )     & 891  & 5    \\ \hline %( ( 5 ), ( 1, 0 ) )
   & ( 8, 4 )        & 275  & 2    \\ \hline
   & ( 9, 1^3 )      & 165  & 2    \\ \hline
   & ( 9, 2, 1 )     & 320  & 11   \\ \hline
   & ( 9, 3 )        & 154  & 6    \\ \hline
   & ( 10, 1^2 )     & 55   & 6    \\ \hline
   & ( 10, 2 )       & 54   & 2    \\ \hline
   & ( 11, 1 )       & 11   & 2    \\ \hline
   & ( 12 )          & 1    & 2    \\ \hline\hline

13 & ( 1^{13} )       & 1    & 1    \\ \hline
   & ( 2, 1^{11} )    & 12   & 1    \\ \hline
   & ( 2^2, 1^9 )     & 65   & 2    \\ \hline
   & ( 2^3, 1^7 )     & 208  & 1    \\ \hline %( (  ) )
   & ( 2^4, 1^5 )     & 429  & 1    \\ \hline %( (  ) )
   & ( 2^5, 1^3 )     & 572  & 3    \\ \hline %( ( 3 ), ( 1, 0 ) )
   & ( 2^6, 1 )       & 429  & 1    \\ \hline %( (  ) )
   & ( 3, 1^{10} )    & 66   & 2    \\ \hline %( ( 2 ), ( 1, 0 ) )
   & ( 3, 2, 1^8 )    & 429  & 1    \\ \hline %( (  ) )
   & ( 3, 2^2, 1^6 )  & 1365 & 2    \\ \hline %( ( 2 ), ( 1, 0 ) )
   & ( 3, 2^3, 1^4 )  & 2574 & 1    \\ \hline %7 PT (  )
   & ( 3, 2^4, 1^2 )  & 2860 & 1    \\ \hline %7 (  )
   & ( 3, 2^5 )       & 1287 & 1    \\ \hline %( (  ) )
   & ( 3^2, 1^7 )     & 936  & 2,2  \\ \hline %( ( 2 ), ( 2, 0 ) )
   & ( 3^2, 2, 1^5 )  & 3432 & 1    \\ \hline %7 PT: (  )(  )
   & ... & & \\ \hline

   & ( 3^3, 1^4 )     & 3575 & (2)  \\ \hline %7 PT: ( 2 ) - 
   & ... & & \\ \hline

   & ( 3^3, 2^2 )     & 3432 & 1    \\ \hline %7 PT: (  ) - (  )
   & ... & & \\ \hline

   & ( 3^4, 1 )       & 2574 & 2    \\ \hline %7 ( 2 )
   & ( 4, 1^9 )       & 220  & 1    \\ \hline %( (  ) )
   & ( 4, 2, 1^7 )    & 1430 & 2    \\ \hline %( ( 2 ), ( 1, 0 ) )
   & ... & & \\ \hline

\end{array}
$

$
\begin{array}{|l|l||l|l|} \hline
n & \lambda & k &  e. d. \\ \hline\hline

13 & ( 4, 3, 1^6 )    & 3640 & 1    \\ \hline %7 PT: (  ) - 
   & ... & & \\ \hline

   & ( 4, 3^3 )       & 3432 & 1    \\ \hline %7 PT: (  ) - (  )
   & ... & & \\ \hline

   & ( 4^3, 1 )       & 3432 & 1    \\ \hline %7 PT: (  ) - (  )
   & ( 5, 1^8 )       & 495  & 2    \\ \hline %( ( 2 ), ( 1, 0 ) )
   & ( 5, 2, 1^6 )    & 3003 & 1    \\ \hline %7 (  )
   & ... & & \\ \hline

   & ( 5, 4^2 )       & 2574 & 1    \\ \hline %7 PT (  )
   & ... & & \\ \hline

   & ( 5^2, 3 )       & 3432 & (2,3)\\ \hline %7 PT: ( 2, 3 ) - 
   & ( 6, 1^7 )       & 792  & 1    \\ \hline %( (  ) )
   & ... & & \\ \hline

   & ( 6^2, 1 )       & 1287 & 2    \\ \hline %( ( 2 ), ( 1, 0 ) )
   & ( 7, 1^6 )       & 924  & 2    \\ \hline %( ( 2 ), ( 1, 0 ) )
   & ... & & \\ \hline

   & ( 7, 3^2 )       & 3575 & (2,3)\\ \hline %7 PT: ( 2, 3 ) - 
   & ... & & \\ \hline

   & ( 7, 5, 1 )      & 2860 & 2,6  \\ \hline %7 ( 2, 6 )
   & ( 7, 6 )         & 429  & 2    \\ \hline
   & ( 8, 1^5 )       & 792  & 1    \\ \hline %( (  ) )
   & ( 8, 2, 1^3 )    & 3003 & 6    \\ \hline %7 ( 6 )
   & ( 8, 2^2, 1 )    & 3640 & 1    \\ \hline %7 PT: (  ) - (  )
   & ... & & \\ \hline

   & ( 8, 3, 2 )      & 3432 & 1    \\ \hline %7 PT: (  ) - (  )
   & ... & & \\ \hline

   & ( 8, 5 )         & 572  & 5    \\ \hline %( ( 5 ), ( 1, 0 ) )
   & ( 9, 1^4 )       & 495  & 2    \\ \hline %( ( 2 ), ( 1, 0 ) )
   & ( 9, 2, 1^2 )    & 1430 & 1    \\ \hline %( (  ) )
   & ( 9, 2^2 )       & 936  & 1    \\ \hline %( (  ) )
   & ( 9, 3, 1 )      & 1365 & 22   \\ \hline %( ( 2, 11 ), ( 1, 0 ), ( 1, 0 ) )
   & ( 9, 4 )         & 429  & 2,2  \\ \hline
   & ( 10, 1^3 )      & 220  & 3    \\ \hline
   & ( 10, 2, 1 )     & 429  & 4    \\ \hline
   & ( 10, 3 )        & 208  & 3    \\ \hline
   & ( 11, 1^2 )      & 66   & 26   \\ \hline
   & ( 11, 2 )        & 65   & 2    \\ \hline
   & ( 12, 1 )        & 12   & 1    \\ \hline
   & ( 13 )           & 1    & 2    \\ \hline\hline

14 & ( 1^{14} )      & 1    & 1    \\ \hline
   & ( 2, 1^{12} )   & 13   & 1    \\ \hline
   & ( 2^2, 1^{10} ) & 77   & 2    \\ \hline
   & ( 2^3, 1^8 )    & 273  & 1    \\ \hline %( (  ) )
   & ( 2^4, 1^6 )    & 637  & 1    \\ \hline %( (  ) )
   & ( 2^5, 1^4 )    & 1001 & 3    \\ \hline %( ( 3 ), ( 1, 0 ) )
   & ( 2^6, 1^2 )    & 1001 & 1    \\ \hline %( (  ) )
   & ( 2^7 )         & 429  & 1    \\ \hline %( (  ) )
   & ( 3, 1^{11} )   & 78   & 2    \\ \hline
   & ( 3, 2, 1^9 )   & 560  & 1    \\ \hline %( (  ) )
   & ( 3, 2^2, 1^7 ) & 2002 & 2    \\ \hline %( ( 2 ), ( 1, 0 ) )
   & ... & & \\ \hline

\end{array}
$

$
\begin{array}{|l|l||l|l|} \hline
n & \lambda & k &  e. d. \\ \hline\hline

14 & ( 3^2, 1^8 )    & 1365 & 2,2  \\ \hline %( ( 2 ), ( 2, 0 ) )
   & ... & & \\ \hline

   & ( 4, 1^{10} )   & 286  & 1    \\ \hline %( (  ) )
   & ( 4, 2, 1^8 )   & 2079 & 2    \\ \hline %( ( 2 ), ( 1, 0 ) )
   & ... & & \\ \hline

   & ( 5, 1^9 )      & 715  & 2    \\ \hline %( ( 2 ), ( 1, 0 ) )
   & ... & & \\ \hline

   & ( 6, 1^8 )      & 1287 & 1    \\ \hline %( (  ) )
   & ... & & \\ \hline

   & ( 7, 1^7 )      & 1716 & 2    \\ \hline %( ( 2 ), ( 1, 0 ) )
   & ... & & \\ \hline

   & ( 7^2 )         & 429  & 14   \\ \hline %( ( 2, 7 ), ( 1, 0 ), ( 1, 0 ) )
   & ( 8, 1^6 )      & 1716 & 1    \\ \hline %( (  ) )
   & ... & & \\ \hline

   & ( 8, 6 )        & 1001 & 1    \\ \hline %( (  ) )
   & ( 9, 1^5 )      & 1287 & 2    \\ \hline %( ( 2 ), ( 1, 0 ) )
   & ... & & \\ \hline

   & ( 9, 5 )        & 1001 & 2,2  \\ \hline %( ( 2 ), ( 2, 0 ) )
   & ( 10, 1^4 )     & 715  & 1    \\ \hline %( (  ) )
   & ( 10, 2, 1^2 )  & 2079 & 4    \\ \hline %( ( 2 ), ( 1, 1, 0 ) )
   & ( 10, 2^2 )     & 1365 & 1    \\ \hline %( (  ) )
   & ( 10, 3, 1 )    & 2002 & 3    \\ \hline %( ( 3 ), ( 1, 0 ) )
   & ( 10, 4 )       & 637  & 2    \\ \hline %( ( 2 ), ( 1, 0 ) )
   & ( 11, 1^3 )     & 286  & 6    \\ \hline %( ( 2, 3 ), ( 1, 0 ), ( 1, 0 ) )
   & ( 11, 2, 1 )    & 560  & 13   \\ \hline %( ( 13 ), ( 1, 0 ) )
   & ( 11, 3 )       & 273  & 2    \\ \hline %( ( 2 ), ( 1, 0 ) )
   & ( 12, 1^2 )     & 78   & 7    \\ \hline
   & ( 12, 2 )       & 77   & 2    \\ \hline
   & ( 13, 1 )       & 13   & 2    \\ \hline
   & ( 14 )          & 1    & 2    \\ \hline\hline

15 & ( 1^{15} )      & 1    & 1    \\ \hline
   & ( 2, 1^{13} )   & 14   & 1    \\ \hline
   & ( 2^2, 1^{11} ) & 90   & 2    \\ \hline
   & ( 2^3, 1^9 )    & 350  & 1    \\ \hline %( (  ) )
   & ( 2^4, 1^7 )    & 910  & 1    \\ \hline %( (  ) )
   & ( 2^5, 1^5 )    & 1638 & 1    \\ \hline %( (  ) )
   & ( 2^6, 1^3 )    & 2002 & 3    \\ \hline %( ( 3 ), ( 1, 0 ) )
   & ( 2^7, 1 )      & 1430 & 1    \\ \hline %( (  ) )
   & ( 3, 1^{12} )   & 91   & 2    \\ \hline %( ( 2 ), ( 1, 0 ) )
   & ( 3, 2, 1^{10}) & 715  & 1    \\ \hline %( (  ) )
   & ( 3, 2^2, 1^8 ) & 2835 & 2    \\ \hline %7 ( 2 )
   & ... & & \\ \hline

   & ( 3^2, 1^9 )    & 1925 & 2,2  \\ \hline %7 ( 2, 2 )
   & ... & & \\ \hline

   & ( 4, 1^{11} )   & 364  & 1    \\ \hline %( (  ) )
   & ( 4, 2, 1^9 )   & 2925 & 2    \\ \hline %7 ( 2 )
   & ... & & \\ \hline

   & ( 5, 1^{10} )   & 1001 & 2    \\ \hline %( ( 2 ), ( 1, 0 ) )
   & ... & & \\ \hline

   & ( 6, 1^9 )      & 2002 & 1    \\ \hline %( (  ) )
   & ... & & \\ \hline

\end{array}
$

$
\begin{array}{|l|l||l|l|} \hline
n & \lambda & k &  e. d. \\ \hline\hline

15 & ( 7, 1^8 )      & 3003 & 2    \\ \hline %7 ( 2 )
   & ... & & \\ \hline

   & ( 8, 1^7 )      & 3432 & 1    \\ \hline %7 PT: (  ) - (  )
   & ... & & \\ \hline

   & ( 8, 7 )        & 1430 & 7    \\ \hline %( ( 7 ), ( 1, 0 ) )
   & ( 9, 1^6 )      & 3003 & 2    \\ \hline %7 ( 2 )
   & ... & & \\ \hline

   & ( 9, 6 )        & 2002 & 2    \\ \hline %( ( 2 ), ( 1, 0 ) )
   & ( 10, 1^5 )     & 2002 & 1    \\ \hline %( (  ) )
   & ... & & \\ \hline

   & ( 10, 5 )       & 1638 & 5    \\ \hline %( ( 5 ), ( 1, 0 ) )
   & ( 11, 1^4 )     & 1001 & 6    \\ \hline %( ( 2, 3 ), ( 1, 0 ), ( 1, 0 ) )
   & ( 11, 2, 1^2 )  & 2925 & 2    \\ \hline %7 ( 2 )
   & ( 11, 2^2 )     & 1925 & 1    \\ \hline %7 (  )
   & ( 11, 3, 1 )    & 2835 & 26   \\ \hline %7 ( 26 )
   & ( 11, 4 )       & 910  & 1    \\ \hline %( (  ) )
   & ( 12, 1^3 )     & 364  & 1    \\ \hline %( (  ) )
   & ( 12, 2, 1 )    & 715  & 14   \\ \hline %( ( 2, 7 ), ( 1, 0 ), ( 1, 0 ) )
   & ( 12, 3 )       & 350  & 3    \\ \hline %( ( 3 ), ( 1, 0 ) )
   & ( 13, 1^2 )     & 91   & 30   \\ \hline
   & ( 13, 2 )       & 90   & 2    \\ \hline
   & ( 14, 1 )       & 14   & 1    \\ \hline
   & ( 15 )          & 1    & 2    \\ \hline\hline

16 & ( 1^{16} )       & 1    & 1    \\ \hline
   & ( 2, 1^{14} )    & 15   & 1    \\ \hline
   & ( 2^2, 1^{12} )  & 104  & 2    \\ \hline
   & ( 2^3, 1^{10} )  & 440  & 1    \\ \hline %(  )
   & ( 2^4, 1^8 )     & 1260 & 1    \\ \hline %(  )
   & ( 2^5, 1^6 )     & 2548 & 1    \\ \hline %7 (  )
   & ... & & \\ \hline

   & ( 2^7, 1^2 )     & 3432 & 1    \\ \hline %7 PT: (  ) - 
   & ( 2^8 )          & 1430 & 1    \\ \hline %7 (  )
   & ( 3, 1^{13} )    & 105  & 2    \\ \hline %( 2 )
   & ( 3, 2, 1^{11} ) & 896  & 1    \\ \hline %(  )
   & ... & & \\ \hline

   & ( 3^2, 1^{10} )  & 2640 & 2,2  \\ \hline %7 PT: ( 2 ) - ( 2, 2 )
   & ... & & \\ \hline

   & ( 4, 1^{12} )    & 455  & 1    \\ \hline %(  )
   & ... & & \\ \hline

   & ( 5, 1^{11} )    & 1365 & 2    \\ \hline %7 ( 2 )
   & ... & & \\ \hline

   & ( 6, 1^{10} )    & 3003 & 1    \\ \hline %7 PT (  )
   & ... & & \\ \hline

   & ( 8^2 )          & 1430 & 2    \\ \hline %7 ( 2 )
   & ... & & \\ \hline

   & ( 9, 7 )         & 3432 & (2,7)\\ \hline %7 PT: ( 2, 7 ) - 
   & ... & & \\ \hline

   & ( 11, 1^5 )      & 3003 & 2    \\ \hline %7 PT: ( 2 ) - ( 2 )
   & ... & & \\ \hline

   & ( 11, 5 )        & 2548 & 1    \\ \hline %( (  ) )
\end{array}
$

$
\begin{array}{|l|l||l|l|} \hline
n & \lambda & k &  e. d. \\ \hline\hline

16 & ( 12, 1^4 )      & 1365 & 1    \\ \hline %7 (  )
   & ... & & \\ \hline

   & ( 12, 2^2 )      & 2640 & 2    \\ \hline %7 ( 2 )
   & ... & & \\ \hline

   & ( 12, 4 )        & 1260 & 2    \\ \hline %( ( 2 ), ( 1, 0 ) )
   & ( 13, 1^3 )      & 455  & 6    \\ \hline %( ( 2, 3 ), ( 1, 0 ), ( 1, 0 ) )
   & ( 13, 2, 1 )     & 896  & 5    \\ \hline %( ( 5 ), ( 1, 0 ) )
   & ( 13, 3 )        & 440  & 6    \\ \hline %( ( 2, 3 ), ( 1, 0 ), ( 1, 0 ) )
   & ( 14, 1^2 )      & 105  & 8    \\ \hline
   & ( 14, 2 )        & 104  & 2    \\ \hline
   & ( 15, 1 )        & 15   & 2    \\ \hline
   & ( 16 )           & 1    & 2    \\ \hline\hline

17 & ( 1^{17} )      & 1    & 1   \\ \hline
   & ( 2, 1^{15} )   & 16   & 1   \\ \hline
   & ( 2^2, 1^{13} ) & 119  & 2   \\ \hline
   & ( 2^3, 1^{11} ) & 544  & 1   \\ \hline %7 (  )
   & ( 2^4, 1^9 )    & 1700 & 1   \\ \hline %7 (  )
   & ... & & \\ \hline

   & ( 3, 1^{14} )   & 120  & 2   \\ \hline %7 ( 2 )
   & ( 3, 2, 1^{12} )& 1105 & 1   \\ \hline %7 (  )
   & ... & & \\ \hline

   & ( 4, 1^{13} )   & 560  & 1   \\ \hline %7 (  )
   & ... & & \\ \hline

   & ( 5, 1^{12} )   & 1820 & 2   \\ \hline %7 ( 2 )
   & ... & & \\ \hline

   & ( 13, 1^4 )     & 1820 & 2   \\ \hline %7 ( 2 )
   & ... & & \\ \hline

   & ( 13, 4 )       & 1700 & 2   \\ \hline %( ( 2 ), ( 1, 0 ) )
   & ( 14, 1^3 )     & 560  & 3   \\ \hline %( ( 3 ), ( 1, 0 ) )
   & ( 14, 2, 1 )    & 1105 & 16  \\ \hline %( ( 2 ), ( 1, 1, 1, 1, 0 ) )
   & ( 14, 3 )       & 544  & 1   \\ \hline %( (  ) )
   & ( 15, 1^2 )     & 120  & 34  \\ \hline
   & ( 15, 2 )       & 119  & 2   \\ \hline
   & ( 16, 1 )       & 16   & 1   \\ \hline
   & ( 17 )          & 1    & 2   \\ \hline\hline

18 & ( 1^{18} )      & 1    & 1  \\ \hline
   & ( 2, 1^{16} )   & 17   & 1  \\ \hline
   & ( 2^2, 1^{14} ) & 135  & 2  \\ \hline
   & ( 2^3, 1^{12} ) & 663  & 1  \\ \hline %7 (  )
   & ( 2^4, 1^{10} ) & 2244 & 1  \\ \hline %7 (  )
   & ... & & \\ \hline

   & ( 3, 1^{15} )   & 136  & 2  \\ \hline %7 ( 2 )
   & ( 3, 2, 1^{13} )& 1344 & 1  \\ \hline %7 (  )
   & ... & & \\ \hline

   & ( 4, 1^{14} )   & 680  & 1  \\ \hline %7 (  )
   & ... & & \\ \hline

   & ( 5, 1^13 )     & 2380 & 2  \\ \hline %7 ( 2 )
   & ... & & \\ \hline

\end{array}
$

$
\begin{array}{|l|l||l|l|} \hline
n & \lambda & k &  e. d. \\ \hline\hline

18 & ( 14, 1^4 )     & 2380 & 3  \\ \hline %7 ( 3 )
   & ... & & \\ \hline

   & ( 14, 4 )       & 2244 & 2  \\ \hline %7 ( 2 )
   & ( 15, 1^3 )     & 680  & 2  \\ \hline %( ( 2 ), ( 1, 0 ) )
   & ( 15, 2, 1 )    & 1344 & 17 \\ \hline %( ( 17 ), ( 1, 0 ) )
   & ( 15, 3 )       & 663  & 6  \\ \hline %( ( 2, 3 ), ( 1, 0 ), ( 1, 0 ) )
   & ( 16, 1^2 )     & 136  & 9  \\ \hline
   & ( 16, 2 )       & 135  & 2  \\ \hline
   & ( 17, 1 )       & 17   & 2  \\ \hline
   & ( 18 )          & 1    & 2  \\ \hline\hline

19 & ( 1^{19} )      & 1    & 1   \\ \hline
   & ( 2, 1^{17} )   & 18   & 1   \\ \hline
   & ( 2^2, 1^{15} ) & 152  & 2   \\ \hline
   & ( 2^3, 1^{13} ) & 798  & 1   \\ \hline %7 (  )
   & ( 2^4, 1^{11} ) & 2907 & 1   \\ \hline %7 (  )
   & ... & & \\ \hline

   & ( 3, 1^{16} )   & 153  & 2   \\ \hline %7 ( 2 )
   & ( 3, 2, 1^{14} )& 1615 & 1   \\ \hline %7 (  )
   & ... & & \\ \hline

   & ( 4, 1^{15} )   & 816  & 1   \\ \hline %7 (  )
   & ... & & \\ \hline

   & ( 15, 4 )       & 2907 & 2   \\ \hline %7 ( 2 )
   & ( 16, 1^3 )     & 816  & 3   \\ \hline %( ( 3 ), ( 1, 0 ) )
   & ( 16, 2, 1 )    & 1615 & 6   \\ \hline %( ( 2, 3 ), ( 1, 0 ), ( 1, 0 ) )
   & ( 16, 3 )       & 798  & 3   \\ \hline %( ( 3 ), ( 1, 0 ) )
   & ( 17, 1^2 )     & 153  & 38  \\ \hline
   & ( 17, 2 )       & 152  & 2   \\ \hline
   & ( 18, 1 )       & 18   & 1   \\ \hline
   & ( 19 )          & 1    & 2   \\ \hline\hline

20 & ( 1^{20} )      & 1    & 1  \\ \hline %(  )
   & ( 2, 1^{18} )   & 19   & 1  \\ \hline %(  )
   & ( 2^2, 1^{16} ) & 170  & 2  \\ \hline %( 2 )
   & ( 2^3, 1^{14} ) & 950  & 1  \\ \hline %7 (  )
   & ... & & \\ \hline

   & ( 3, 1^{17} )   & 171  & 2  \\ \hline %7 ( 2 )
   & ( 3, 2, 1^{15} )& 1920 & 1  \\ \hline %7 (  )
   & ... & & \\ \hline

   & ( 4, 1^{16} )   & 969  & 1  \\ \hline %7 (  )
   & ... & & \\ \hline

   & ( 17, 1^3 )     & 969  & 6   \\ \hline %( ( 2, 3 ), ( 1, 0 ), ( 1, 0 ) )
   & ( 17, 2, 1 )    & 1920 & 19  \\ \hline %( ( 19 ), ( 1, 0 ) )
   & ( 17, 3 )       & 950  & 2   \\ \hline %( ( 2 ), ( 1, 0 ) )
   & ( 18, 1^2 )     & 171  & 10  \\ \hline
   & ( 18, 2 )       & 170  & 2   \\ \hline
   & ( 19, 1 )       & 19   & 2   \\ \hline
   & ( 20 )          & 1    & 2   \\ \hline\hline
\end{array}
$

\end{center}

\onecolumn

\section{Graphs of type $\mathcal{C}_p^2$} \label{graphs}

On the basis of the previous tables, we order partitions in graphs of type $\mathcal{C}_p^2$. If for some vertex no successor is computed, this is marked by a question mark. If successors are known but not listed, this is marked by three dots. The edges are not marked by arrows, because the direction is clear.\\

$\mathbf{p = 2:}$\\
For purposes of clarity, $\mathcal{C}_2^2$ for $n \le 13$ is split up into two partial graphs. The first one contains the partitions related to Conjecture \ref{conj2} and those of type $(\lambda_1, \ldots, \lambda_m)$ with $\lambda_1, \ldots, \lambda_{m-1}$ odd and $\lambda_1 \ne 1$. The second one contains the rest, together with some connections to the first. All in all, every known edge of $\mathcal{C}_2^2$ for $n \le 13$ can be found in at least one of the two partial graphs.\\

\input{graph2.1.eepicemu}

\newpage

\input{graph2.2.eepicemu}

\newpage

$\mathbf{p = 3:}$\\

\input{graph3.eepicemu}

\newpage

\input{graph5.eepicemu}

\input{graph7.eepicemu}

\end{appendix}

\bibliographystyle{apalike}
\bibliography{specht.coho}

\textbf{Address of the author:}

Christian Weber, Lehrstuhl D f\"ur Mathematik, RWTH Aachen University, Templergraben 64, 52062 Aachen, Germany

email: christian.weber@math.rwth-aachen.de

\end{document}